\def\thesection{\arabic{section}}
\def\theequation{\thesection.\arabic{equation}}
\newcommand{\ds} {\displaystyle}
\newcommand{\e}{\epsilon}
\newcommand{\Om} {\Omega}
\newcommand{\ra} {\rightarrow}
\newcommand{\De} {\Delta}
\newcommand{\la} {\lambda}
\newcommand{\La} {\Lambda}
\newcommand{\noi} {\noindent}
\newcommand{\mb} {\mathbb}
\newcommand{\mc} {\mathcal}
\def\theequation{\@arabic{\c@section}.\@arabic{\c@equation}}
\def\QED{\hfill {$\square$}\goodbreak \medskip}
\newtheorem{Theorem}{Theorem}[section]
\newtheorem{Lemma}[Theorem]{Lemma}
\newtheorem{Proposition}[Theorem]{Proposition}
\newtheorem{Corollary}[Theorem]{Corollary}
\newtheorem{Remark}[Theorem]{Remark}
\newtheorem{Definition}[Theorem]{Definition}
\begin{document}
{\vspace{0.01in}}

\title
{ \sc Positive solutions for nonlinear Choquard equation with
singular nonlinearity}

\author{
{\bf  Tuhina Mukherjee\footnote{email: tulimukh@gmail.com}}\; and\; {\bf K. Sreenadh\footnote{e-mail: sreenadh@gmail.com}}\\
{\small Department of Mathematics}, \\{\small Indian Institute of Technology Delhi}\\
{\small Hauz Khaz}, {\small New Delhi-16, India}\\
 }

\date{}

\maketitle

\begin{abstract}

\noi In this article, we study the following nonlinear Choquard equation with singular nonlinearity
\begin{equation*}
 \quad -\De u = \la u^{-q} + \left( \int_{\Om}\frac{|u|^{2^*_{\mu}}}{|x-y|^{\mu}}\mathrm{d}y \right)|u|^{2^*_{\mu}-2}u, \quad u>0 \; \text{in}\;
\Om,\quad u = 0 \; \mbox{on}\; \partial\Om,
\end{equation*}
where  $\Om$ is a bounded domain in $\mb{R}^n$ with smooth boundary $\partial \Om$, $n > 2,\; \la >0,\; 0 < q < 1, \; 0<\mu<n$ and $2^*_\mu=\frac{2n-\mu}{n-2}$. Using variational approach and structure of associated Nehari manifold, we show the existence and multiplicity of positive weak solutions of the above problem, if $\la$ is less than some positive constant.  We also study the regularity of these weak solutions.
\medskip

\noi \textbf{Key words:} Choquard equation, Nehari manifold, Singular nonlinearity.

\medskip

\noi \textit{2010 Mathematics Subject Classification:} 35R11, 35R09, 35A15.

\end{abstract}

\section{Introduction}
Let $\Om \subset \mb R^n$, $n>2$ be a bounded domain with smooth boundary $\partial \Om$.  We consider the following problem with singular nonlinearity :
\begin{equation*}
(P_{\la}): \quad
 \quad -\De u = \la u^{-q} + \left( \int_{\Om}\frac{|u|^{2^*_{\mu}}}{|x-y|^{\mu}}\mathrm{d}y \right)|u|^{2^*_{\mu}-2}u, \quad u>0 \; \text{in}\;
\Om, \quad u = 0 \; \mbox{on}\; \partial\Om,
\quad
\end{equation*}
where $\la >0,\; 0 < q < 1, $ $ 0<\mu<n$ and  $2^*_\mu=\frac{2n-\mu}{n-2}$.  Problems of the type $(P_\la)$  are inspired by the Hardy-Littlewood-Sobolev inequality:
\begin{equation}\label{new1}
\left(\int_{\mb R^n} \int_{\mb R^n}\frac{|u(x)|^{2^*_{\mu}}|u(y)|^{2^*_{\mu}}}{|x-y|^{\mu}}\mathrm{d}x\mathrm{d}y\right)^{\frac{1}{2^*_{\mu}}} \leq C ^{\frac{1}{2^*_{\mu}}}|u|^2_{2^*}, \; \text{for all} \; u\in D^{1,2} (\mathbb{R}^n).\end{equation}
where $C=C(n,\mu)$ is a positive constant and $2^{*}=\frac{2n}{n-2}.$ Recently, researchers are paying lot of attention to Choquard type equations and as a result, good amount of work has been done in this topic.  Existence of solutions for the equation of the type
\[ -\De u + w(x)u = (I_\alpha * |u|^p)|u|^{p-2}u \; \text{in} \; \mb R^n,\]
where $w(x)$ is an appropriate function, $I_\alpha$ is Reisz potential and $p>1$ is chosen appropriately,
have been studied in \cite{clsa, ghsc, lu, mosc, wang, ztxs}.  In \cite{lb}, Lieb  proved the existence and uniqueness, up to translations, of the ground state for the problem
\[ -\De u+ u = (|x|^{\mu} * F(u))f(u) \; \text{in}\; \mb R^n, \]
where  $f(t)$ is critical growth nonlinearity such that $|tf(t)| \leq C||t|^2 + |t|^{\frac{2n-\mu}{n-2s}}|$ for $t \in \mb R$, $\mu >0$,
some constant $C>0$ and $F(t) = \int_0^{z}f(z) \mathrm{d}z$. In \cite{ myang3,myang, myang1},  Gao and Yang  showed existence and multiplicity results for Brezis-Nirenberg type  problem of the nonlinear  Choquard equation
\begin{equation*}
-\De u = \left( \int_{\Om}\frac{|u|^{2^*_{\mu}}}{|x-y|^{\mu}}\mathrm{d}y \right)|u|^{2^*_{\mu}-2}u + \la g(u) \; \text{in}\;
\Om, \quad u = 0 \; \mbox{on}\; \partial\Om,
\end{equation*}
where $\Om$ is smooth bounded domain in $\mb R^n$, $n>2$, $\la>0$,
$0<\mu<n$ and $g(u)$ is a nonlinearity with certain assumptions. We
also cite \cite{AF,alves} and references therein for recent works on
Choquard equations.
 On the other hand, authors in \cite{hirano1} studied the existence of multiple solutions of the equation
\begin{equation}\label{hr}
-\De u = \la u^{-q}+ u^p,\; u>0 \; \text{in}\;
\Om, \quad u = 0 \; \mbox{on}\; \partial\Om,
\end{equation}
where $\Om$ is smooth bounded domain in $\mb R^n$, $n\geq 1$, $p>1$, $\la>0$  and $0<q<1$.  \noi The paper by Crandal, Rabinowitz and Tartar \cite{crt} is the starting point on semilinear problem with singular nonlinearity. A lot of work has been done related to existence and multiplicity results on singular nonlinearity, see \cite{haitao, hirano2, hirano1}. In \cite{haitao}, Haitao studied the equation \eqref{hr} for $n\geq 3$, $1<p\leq 2^{*}-1$ and showed the  existence  of two positive  solutions for maximal interval of the parameter $\la$ using monotone iterations and mountain pass lemma. Semilinear equations with singular nonlinearities has been also discussed in \cite{AJ, boc,guo,torres,ysy,yij}. Existence of multiple positive solutions for an elliptic equation with singular nonlinearity and sign changing weight functions has been shown in \cite{Hsulin}. There are many works on singular problem for equations involving $p$-Laplacian operator with critical  growth terms.  Among them we cite \cite{GR1,GR,GST,GST2,JPS,JS,  HFV, HKS} for readers and references therein. \\

\noi Observing these results, there arise a natural question that if problem $(P_\la)$ has multiple positive solutions, since $(P_\la)$ contains both singular as well as critical  nonlinearity in the sense of Hardy-Littlewood-Sobolev inequality \eqref{new1}. In this paper, we study the multiplicity results with convex-concave type critical growth and singular nonlinearity.
Here, we follow the approach as in the work of Hirano, Saccon and Shioji \cite{hirano1} and we would like to remark that the results proved here are new. The main difficulty in treating $(P_\la)$ is the presence of singular nonlinearity along with critical exponent in the sense of Hardy-Littlewood-Sobolev inequality which is nonlocal in nature. The energy functional no longer remains differentiable due to presence of singular nonlinearity, so usual minimax theorems are not applicable. Also the critical exponent term being nonlocal adds on the difficulty to study the Palais-Smale level around a nontrivial critical point. We obtain our results by studying the existence of minimizers that arise out of structure of Nehari manifold. The existence and multiplicity of solutions by  the method of Nehari manifold and fibering maps has been investigated in \cite{brzh,brwu, wu}.
For more details related to Nehari manifold, we refer to \cite{ brown, Chwu, wu2} and references therein. Moreover, under suitable assumptions, we obtained some regularity results. These results are obtained overcoming the standard bootstrap arguments.\\

\noi The paper is organized as follows: In section 2, we present some preliminaries required to prove our results and state the main results of our work. In section 3, we study the corresponding Nehari manifold using the fibering maps and properties of minimizers. Section 4 and 5 are devoted to show the existence of minimizers and solutions. Last but not least, in section 6, we show the regularity results for the solutions.

\section{Preliminaries and Main Results} In this section, we recall some preliminary results that are required in the later sections and also give the statement of our main results.
We denote $|\cdot|_p$ as the standard $L^p(\Om)$ norm, $1\leq p\leq \infty$ and $\|\cdot\|$ for $H^1_0(\Om)$ norm.
Now for each $\alpha \geq 0$, we set
\begin{equation*}\label{eq00}
 C_{\alpha} = \sup \left \{ \int_{\Om} |u|^\alpha \mathrm{d}x : \|u\| = 1\right \}.
 \end{equation*}
Then $C_0  = |\Om| $ = Lebesgue measure of $\Om$ and
$\int_{\Om} |u|^\alpha \mathrm{d}x \leq C_\alpha \|u\|^{\alpha}$, for all $ u \in H^1_0(\Om)$.
The key point to apply variational approach for the problem $(P_\la)$ is the following well-known Hardy-Littlewood-\textcolor{red}{Sobolev} inequality \cite{Lieb}.
 \begin{Proposition}\label{Hardy-littlewood}
 Let $t,r>1$ and $0<\mu<n $ with $1/t+\mu/n+1/r=2$, $f \in L^t(\mathbb R^n)$ and $h \in L^r(\mathbb R^n)$. There exists a sharp constant $C(t,n,\mu,r)$, independent of $f,h$ such that
 \begin{equation*}\label{har-lit}
\textcolor{red}{ \left|\int_{\mb R^n}\int_{\mb R^n} \frac{f(x)h(y)}{|x-y|^{\mu}}\mathrm{d}x\mathrm{d}y \right|}\leq C(t,n,\mu,r)|f|_t|h|_r.
 \end{equation*}
 \end{Proposition}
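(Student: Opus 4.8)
The statement is the classical Hardy--Littlewood--Sobolev inequality, so the plan is to recast it as a mapping property of a convolution operator, establish existence of the constant by standard real-variable methods, and reserve the extra machinery for the sharpness claim. First I would dualize. Writing $Tf(x) = \int_{\mb R^n} |x-y|^{-\mu} f(y)\,\mathrm{d}y = (|\cdot|^{-\mu} * f)(x)$, the double integral equals $\int_{\mb R^n} h\,(Tf)\,\mathrm{d}x$, so by H\"older with exponents $r$ and its conjugate $r'$ it suffices to prove $|Tf|_{r'} \le C\,|f|_t$, i.e.\ that convolution with the kernel $K(x)=|x|^{-\mu}$ is bounded from $L^t(\mb R^n)$ to $L^{r'}(\mb R^n)$. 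The kernel lies in the weak Lebesgue space $L^{n/\mu,\infty}$, since $|\{x: |x|^{-\mu}>s\}| = \omega_n\, s^{-n/\mu}$ with $\omega_n = |\{|x|<1\}|$. The constraint $1/t + \mu/n + 1/r = 2$ is precisely $1/r' = 1/t + \mu/n - 1$, which is the scaling relation demanded by the weak Young (O'Neil) inequality, so the exponents are admissible and the dimensional analysis is consistent.

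Second, I would prove the required bound directly. For fixed $x$ and a radius $R>0$ to be chosen, split $K = K\mathbf 1_{|x-y|\le R} + K\mathbf 1_{|x-y|>R}$. The near part is controlled by H\"older (the integral converges because $\mu<n$) and the far part by H\"older against $|f|_t$ (convergent by the exponent relation). Optimizing over $R=R(x)$ so as to balance the two contributions yields the weak-type estimate $\|Tf\|_{L^{r',\infty}} \le C\,|f|_t$. Since $t,r>1$ strictly, the target exponent $r'$ lies in the open admissible range, so the Marcinkiewicz interpolation theorem upgrades the weak-type bound to the strong-type bound $|Tf|_{r'} \le C(t,n,\mu,r)\,|f|_t$, establishing existence of a finite constant.

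Finally, for the sharp constant I would follow Lieb's symmetrization argument. By the Riesz rearrangement inequality the bilinear functional $\int\int |x-y|^{-\mu} f(x) h(y)$ does not decrease when $f,h$ are replaced by their symmetric decreasing rearrangements $f^*,h^*$, while the norms $|f|_t$ and $|h|_r$ are preserved; this reduces the extremal problem to radial, decreasing competitors. Exploiting the invariance of the kernel under conformal inversions (and, in the diagonal case $t=r$, the competing-symmetries method) one identifies the optimizers as the explicit family arising from $(1+|x|^2)^{-\alpha}$ and computes the exact value of $C(t,n,\mu,r)$.

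I expect this last step to be the genuine obstacle: existence of a finite constant is routine once the inequality is cast as weak Young's inequality plus Marcinkiewicz interpolation, whereas pinning down the sharp value and exhibiting the extremizers requires the rearrangement and conformal-geometry input and is the substantive content of the proposition. For the variational applications in this paper, however, only the existence of the constant is needed, so in practice I would cite the sharp form from \cite{Lieb} and rely on the elementary half of the argument.
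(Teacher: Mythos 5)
The paper never proves this proposition: it is quoted directly from Lieb--Loss \cite{Lieb}, so your proposal is being compared against a citation rather than an argument. Your outline is correct and follows the standard textbook route: dualize to the convolution operator $Tf=|\cdot|^{-\mu}*f$, observe that the kernel lies in $L^{n/\mu,\infty}(\mathbb{R}^n)$, check that the constraint $1/t+\mu/n+1/r=2$ is exactly the weak Young scaling $1+1/r'=1/t+\mu/n$, obtain the weak-type bound by the near/far splitting, and upgrade to strong type by Marcinkiewicz interpolation (legitimate precisely because $t,r>1$ places the target strictly inside the admissible range). What this buys is self-containedness for the only part of the proposition the paper ever uses downstream: finiteness of the constant, which underlies the norm equivalence on $L^{2^*}(\Omega)$, the definition of $S_{H,L}$, and the threshold $\lambda_*$ in the fibering-map analysis. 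One precision is in order: your final step claims to compute the exact value of $C(t,n,\mu,r)$; Lieb's rearrangement and competing-symmetries argument identifies the extremizers and the explicit constant only in the conformally invariant diagonal case $t=r$ (which happens to be the case $t=r=\tfrac{2n}{2n-\mu}$ used throughout this paper), while for general admissible exponents the sharp value has no known closed form, although existence of optimizers is still due to Lieb. Since ``sharp constant'' in the statement means only the best constant, which exists as soon as some finite constant works, your elementary half already proves the proposition as stated, and the conclusion you reach at the end --- that in practice one should cite \cite{Lieb} and use only the elementary part --- is exactly what the authors do.
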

In general, let $f = h= |u|^q$ then
\[ \int_{\mb R^n}\int_{\mb R^n} \frac{|u(x)|^q|u(y)|^q}{|x-y|^{\mu}}\mathrm{d}x\mathrm{d}y\]
 is well defined if $|u|^q \in L^t(\mb R^n)$ for some $t>1$ satisfying
 $\ds \frac{2}{t}+ \frac{\mu}{n}=2.$
 Thus, for $u \in H^1(\mb R^n)$, by Sobolev Embedding theorems, we must have
 \[\frac{2n-\mu}{n}\leq q \leq \frac{2n-\mu}{n-2}.\]
 We say $(2n-\mu)/n$ is the lower critical exponent and $2^*_\mu = (2n-\mu)/(n-2)$ is the upper critical exponent in the sense of Hardy-Littlewood-Sobolev inequality. From this inequality, for each $u \in D^{1,2}(\mb R^n)$ we have
 \[\left(\int_{\mb R^n} \int_{\mb R^n}\frac{|u(x)|^{2^*_{\mu}}|u(y)|^{2^*_{\mu}}}{|x-y|^{\mu}}\mathrm{d}x\mathrm{d}y\right)^{\frac{1}{2^*_{\mu}}} \leq C(n,\mu)^{\frac{1}{2^*_{\mu}}}|u|^2_{2^*},\]
 where $C(n,\mu)$ is a suitable constant defined in Proposition \ref{Hardy-littlewood} and $2^* = 2n/(n-2)$. We define
 \begin{equation*}
 S_{H,L} := \inf\limits_{D^{1,2}(\mb R^n)\textcolor{red}{\setminus}\{0\}} \frac{\int_{\mb R^n}|\nabla u|^2 ~\mathrm{d}x}{\left(\int_{\mb R^n} \int_{\mb R^n}\frac{|u(x)|^{2^*_{\mu}}|u(y)|^{2^*_{\mu}}}{|x-y|^{\mu}}\mathrm{d}x\mathrm{d}y\right)^{\frac{1}{2^*_{\mu}}}}
 \end{equation*}
as the best constant which is achieved if and only if $u$ is of the form
\[C\left( \frac{t}{t^2+|x-x_0|^2}\right)^{\frac{n-2}{2}}, \; \; x \in \mb R^n,\]
for some $x_0 \in \mb R^n$, $C>0$ and $t>0$ (refer Lemma $1.2$ of \cite{myang}). Also, it satisfies
\begin{equation*}\label{critical-cho}
-\De u = \left( \int_{\mb R^n}\frac{|u|^{2^*_{\mu}}}{|x-y|^{\mu}}\mathrm{d}y \right)|u|^{2^*_{\mu}-2}u \; \; \text{in}\; \mb R^n
\end{equation*}
and it is well-known that this characterization of $u$ provides the minimizer for $S$, where
\[ S = \inf_{u \in H^{1}_0(\mb R^n)\setminus \{0\}} \frac{\int_{\mb R^n} |\nabla u|^2~\mathrm{d}x}{\left(\int_{\mb R^n}|u|^{2^*}\right)^{2/2^*}} .\]
We remark that $S_{H,L}$ does not depend on the domain $\Om$, see (\cite{myang}, Lemma $1.3$). Moreover, from Lemma $1.2$ of \cite{myang} we have
\begin{equation}\label{relation}
S_{H,L} = \frac{S}{C(n,\mu)^{\frac{1}{2^*_{\mu}}}}.
\end{equation}
\noi Consider the family of functions $\{U_{\epsilon}\}$ defined as
\[ U_{\epsilon}(x) = (n(n-2))^{\frac{n-2}{4}} \left( \frac{\epsilon}{\epsilon^2+ |x|^2}\right)^{\frac{n-2}{2}} \; \text{for}\; x \in \mb R^n \; \text{and} \; \epsilon >0.\]
Then for each $\e >0$, $U_\e$ satisfies
\begin{equation}\label{aubtal}
\int_{\mb R^n}|\nabla U_\e|^2 \mathrm{d}x= S^{n/2}.
\end{equation}
We recall the following results  from \cite{hirano1}.
Let $\Om^{\prime}$ be an open subset of $\Om$. Let $u,v$ be distributions on $\Om$, then we write
\[-\De u \leq v \; \text{in}\; \Om^{\prime}\]
if the inequality holds in the sense of distributions. If $u \in H^1_{\text{loc}}(\Om)$ and $u \in L^1_{\text{loc}}(\Om)$, it means that $- \int_{\Om} \nabla u \nabla \psi ~\mathrm{d}x \leq \int_{\Om} v\psi ~\mathrm{d}x $ for all $\psi \in C_c^{\infty}(\Om)$ with $\psi \geq 0$ and $\text{supp }\psi \subset \Om^{\prime}$. For regularity result, we require the following lemma (for proof, refer Theorem 8.15, \cite{GT}).
\begin{Theorem}\label{bddbel}
Let $\Om$ be a bounded domain in $\mb R^n$ with $n\geq 2$. Let $u \in H^1_0(\Om)$ and let $g \in L^{\alpha/2}(\Om)$ with $\alpha > n$ satisfying $- \De u \leq g$. Then $u$ is essentially bounded from above.
\end{Theorem}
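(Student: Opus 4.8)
The plan is to establish this global $L^\infty$ bound by the De Giorgi--Moser iteration, exploiting that the hypothesis $\alpha>n$ provides exactly the room needed to make the iteration self-improving. Since only the bound from above is claimed, I would work with $w:=u^+ + k$, where $u^+=\max\{u,0\}$ and $k:=|g|_{\alpha/2}>0$ is chosen to homogenise the estimate; note $w\ge k>0$, and since $\Om$ is bounded and $u\in H^1_0(\Om)\hookrightarrow L^{2^*}(\Om)$, we have $w\in L^{2^*}(\Om)$. First I would upgrade the distributional inequality $-\De u\le g$ to the statement that $\int_\Om \na u\cdot\na\psi\,\mathrm{d}x\le\int_\Om g\psi\,\mathrm{d}x$ holds for every $\psi\in H^1_0(\Om)$ with $\psi\ge0$, using the density of $C_c^\infty(\Om)$ in $H^1_0(\Om)$ together with truncation to keep test functions nonnegative.

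The engine is a Caccioppoli-type energy estimate. For $\beta\ge1$ I would test against $\psi=w^{2\beta-1}-k^{2\beta-1}$ (which lies in $H^1_0(\Om)$ and is nonnegative since $w\ge k$), truncated at a level $N$ so that it is genuinely admissible and the chain rule is legitimate; the final estimate then follows by monotone convergence as $N\to\infty$, the integrability gained at each stage justifying the next. Using $\na w^\beta=\beta w^{\beta-1}\na w$ and $2\beta-1\ge\beta$ one obtains $\tfrac1\beta\int_\Om|\na w^\beta|^2\,\mathrm{d}x\le\int_\Om \tilde g\, w^{2\beta}\,\mathrm{d}x$, where $\tilde g:=|g|/w$ satisfies $|\tilde g|_{\alpha/2}\le|g|_{\alpha/2}/k=1$ by the choice of $k$. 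Hölder's inequality with exponent $\alpha/2$ and its conjugate then yields $\int_\Om\tilde g\,w^{2\beta}\,\mathrm{d}x\le |w^\beta|_p^2$ with $p:=\tfrac{2\alpha}{\alpha-2}$, and the decisive point is the elementary equivalence $\alpha>n\iff p<2^*$. Combining with the Sobolev inequality $|w^\beta|_{2^*}^2\le S^{-1}|\na w^\beta|_2^2$ produces the reverse-H\"older inequality
\[ |w|_{2^*\beta}\le (S^{-1}\beta)^{1/(2\beta)}\,|w|_{p\beta}. \]

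Finally I would iterate this along the geometric sequence $\beta_j=\chi^j$, where $\chi:=2^*/p>1$, so that $p\beta_{j+1}=2^*\beta_j$ and the right-hand norm at step $j+1$ is exactly the left-hand norm at step $j$. The accumulated constant is $\prod_{j}(S^{-1}\chi^j)^{1/(2\chi^j)}$, which converges because $\sum_{j} j\,\chi^{-j}<\infty$ for $\chi>1$; hence $|w|_\infty=\lim_{j\to\infty}|w|_{2^*\chi^j}\le C\,|w|_{2^*}<\infty$, so $u^+\in L^\infty(\Om)$, i.e. $u$ is essentially bounded from above. I expect the main obstacle to be the bookkeeping that turns this formal scheme into a rigorous one: namely (i) verifying that the truncated powers are admissible test functions and passing to the limit, and (ii) tracking the constants through the iteration so that the infinite product converges --- which is precisely where the strict inequality $\alpha>n$, encoded in $\chi>1$, is indispensable.
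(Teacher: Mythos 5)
You should first be aware that the paper contains no proof of this statement at all: Theorem \ref{bddbel} is quoted from Gilbarg--Trudinger (Theorem 8.15 of \cite{GT}), so the only proof to compare with is the one in that reference --- and that proof is precisely the global Moser iteration you describe, down to the normalization $w=u^{+}+k$ with $k=|g|_{\alpha/2}$ chosen so that $\tilde g=|g|/w$ satisfies $|\tilde g|_{\alpha/2}\le 1$. Your outline is therefore the right argument, but two steps need repair before it is a proof. First, the inequality $|w^\beta|_{2^*}^2\le S^{-1}|\nabla w^\beta|_2^2$ fails as written: since $w^\beta\ge k^\beta>0$ a.e. in $\Om$, the function $w^\beta$ does not belong to $H^1_0(\Om)$, so the $H^1_0$-Sobolev inequality cannot be applied to it. The standard fix is to apply Sobolev to $w^\beta-k^\beta\in H^1_0(\Om)$ (this membership is exactly what your truncation-and-limit step produces) and then absorb the boundary constant using $w\ge k$: from $k^\beta|\Om|^{1/2^*}\le|\Om|^{1/2^*-1/p}\,|w^\beta|_p$ one obtains $|w|_{2^*\beta}\le\bigl((S^{-1}\beta)^{1/2}+|\Om|^{1/2^*-1/p}\bigr)^{1/\beta}|w|_{p\beta}$, and the accumulated product along $\beta_j=\chi^j$ still converges because the logarithm of the bracketed factor grows only linearly in $j$ while the weights $\chi^{-j}$ are summable. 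Second, the theorem is stated for $n\ge 2$, and for $n=2$ the exponent $2^*$ and the constant $S$ do not exist; there you fix any $s\in(p,\infty)$, use the embedding $H^1_0(\Om)\hookrightarrow L^s(\Om)$ on the bounded domain $\Om$, and run the identical iteration with $\chi=s/p>1$. With these two routine modifications (plus the trivial remark that when $g\equiv 0$ one may take any $k>0$), your argument is complete and is, in substance, the proof of the theorem that the paper cites rather than proves.
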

Also, we define $\delta : \Om \rightarrow [0,\infty)$ by $\delta(x)=\inf\{|x-y|: y \in \partial \Om\}$, for each $x \in \Om$. For each $a>0$, we set $\Om_a = \{x\in \Om: \delta(x)<a\}$.
We fix $\Om$ to be a bounded domain in $\mb R^n$ with $n>2$, $0<q<1$ and $\la >0$, for rest of this paper.
\begin{Definition}
We say $u$ is a positive weak solution of $(P_\la)$
if $u>0$ in $\Om$, $u \in H^1_0(\Om)$ and
$$ \int_{\Om} (\nabla u \nabla \psi -\la u^{-q}\psi)~\mathrm{d}x  - \int_{\Om}\int_{\Om}\frac{|u(x)|^{2^*_{\mu}}|u(y)|^{2^*_{\mu}-2}u(y)\textcolor{red}{\psi}(y)}{|x-y|^{\mu}}~\mathrm{d}x\mathrm{d}y = 0 \;\; \text{for all} \; \psi \in C^{\infty}_c(\Om).$$
\end{Definition}
We define the functional $I_{\la} : H^1_{0}(\Om) \rightarrow (-\infty, \infty]$ by
\[ I_{\la}(u) = \frac12 \int_{\Om}|\nabla u|^2~ \mathrm{d}x- \frac{\la}{1-q} \int_\Om |u|^{1-q} \mathrm{d}x - \frac{1}{22^*_{\mu}}\int_{\Om}\int_{\Om}\frac{|u(x)|^{2^*_{\mu}}|u(y)|^{2^*_{\mu}}}{|x-y|^{\mu}}~\mathrm{d}x\mathrm{d}y, \]
for $u \in H^1_{0}(\Om)$. For each $0 < q <1$, we set
$\ds H_+ = \{ u \in H^1_0(\Om) : u \geq 0\}$ and
$$H_{+,q} = \{ u \in H_+ : u \not\equiv 0, |u|^{1-q} \in L^1(\Om)\} = H_+ \setminus \{0\} .$$
We recall the following Lemma $A.1$ of \cite{hirano1}.
\begin{Lemma}\label{lem2.1}
For each $w \in H_{+}$, there exists a sequence $\{w_k\}$ in $H^1_{0}(\Om)$ such that, $w_k \rightarrow w$ strongly in $H^1_0(\Om)$, where $0 \leq w_1 \leq w_2 \leq \ldots$ and $w_k$ has compact support in $\Om$, for each k .
\end{Lemma}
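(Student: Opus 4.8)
The plan is to build the approximating sequence by multiplying $w$ with an explicit increasing family of Lipschitz cut-off functions adapted to the distance to the boundary, rather than appealing to the abstract density of $C_c^\infty(\Om)$ in $H^1_0(\Om)$, which would destroy both positivity and monotonicity. Using the distance function $\delta$ introduced above, for each $k \in \mb N$ I set
\[ \theta_k(x) = \max\{0, \min\{1, k\,\delta(x) - 1\}\}, \qquad w_k = w\,\theta_k. \]
Thus $\theta_k$ vanishes on $\Om_{1/k} = \{\delta < 1/k\}$, equals $1$ on $\{\delta \geq 2/k\}$, and interpolates linearly in between. Since $\delta$ is $1$-Lipschitz, $\theta_k$ is Lipschitz, so each $w_k$ belongs to $H^1_0(\Om)$ as a product of a bounded Lipschitz function and an $H^1$ function with compact support; moreover $w_k \geq 0$ because $w \geq 0$ and $\theta_k \geq 0$.

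First I would record the three elementary properties. The support of $\theta_k$, hence of $w_k$, lies in the closed set $\{\delta \geq 1/k\}$, which is bounded and has distance $1/k$ from $\partial\Om$; being a closed bounded subset of the open set $\Om$ it is compact, so each $w_k$ has compact support in $\Om$. For monotonicity, fix $x$ and note that $k \mapsto k\delta(x) - 1$ is nondecreasing and that both $\min\{1,\cdot\}$ and $\max\{0,\cdot\}$ preserve this, so $\theta_k \leq \theta_{k+1}$ pointwise and, as $w \geq 0$, $0 \leq w_1 \leq w_2 \leq \cdots$. Finally, for every $x \in \Om$ one has $\delta(x) > 0$, hence $\theta_k(x) = 1$ for all $k \geq 2/\delta(x)$, so $w_k \to w$ pointwise a.e.

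The substantive step is the strong convergence $w_k \to w$ in $H^1_0(\Om)$. Writing $\|v\|^2 = \int_\Om |\na v|^2$ and using $\na(w - w_k) = (1-\theta_k)\na w - w\,\na\theta_k$, I would estimate $\|w - w_k\|^2 \leq 2\int_\Om (1-\theta_k)^2|\na w|^2 + 2\int_\Om w^2 |\na\theta_k|^2$. The first integral tends to $0$ by dominated convergence, since $1 - \theta_k \to 0$ pointwise while $(1-\theta_k)^2|\na w|^2 \leq |\na w|^2 \in L^1(\Om)$. The second is the crux: as $|\na\delta| = 1$ a.e., one has $|\na\theta_k|^2 = k^2$ on the band $\{1/k < \delta < 2/k\}$ and $0$ elsewhere, so on that band $k^2 \leq \delta^{-2}$ and
\[ \int_\Om w^2|\na\theta_k|^2 = k^2\int_{\{1/k < \delta < 2/k\}} w^2 \leq \int_{\{\delta < 2/k\}} \frac{w^2}{\delta^2}. \]

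Here I invoke Hardy's inequality on the smooth bounded domain $\Om$, which guarantees $w/\delta \in L^2(\Om)$ for $w \in H^1_0(\Om)$; since the sets $\{\delta < 2/k\}$ shrink to a set of measure zero, absolute continuity of the integral forces the right-hand side to $0$, completing the proof. I expect the control of this cut-off gradient term — the need to borrow boundary decay of $w$ through Hardy's inequality — to be the main obstacle, since a naive cut-off over a band of width $1/k$ would otherwise leave an $O(1)$ contribution; it is precisely the vanishing trace encoded in $w \in H^1_0(\Om)$ that renders this term negligible.
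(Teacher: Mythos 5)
Your proof is sound, but note that the paper itself does not prove this lemma: it is recalled verbatim as Lemma A.1 of Hirano--Saccon--Shioji \cite{hirano1}, so there is no in-text argument to compare against. What you supply is a genuinely self-contained construction: distance-function cutoffs $\theta_k=\max\{0,\min\{1,k\delta-1\}\}$ multiplied onto $w$, with monotonicity read off from the monotonicity of $k\mapsto k\delta(x)-1$ under $\min$ and $\max$, compact support from $\{\delta\geq 1/k\}$ being a compact subset of $\Om$, and strong convergence reduced via the product rule to the vanishing of $\int_\Om w^2|\nabla\theta_k|^2\,\mathrm{d}x$, which you control by Hardy's inequality. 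This route stays entirely within the paper's toolkit: the standing assumption that $\partial\Om$ is smooth justifies Hardy's inequality on $\Om$, and the fact $|\nabla\delta|=1$ a.e.\ that you invoke is exactly what the paper records later in Lemma \ref{dist1}. The trade-off is that your argument needs this boundary regularity, whereas the cited lemma of \cite{hirano1} is stated for a general bounded domain; for the purposes of this paper that costs nothing.

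One slip should be fixed: on the band $\{1/k<\delta<2/k\}$ the asserted inequality $k^2\leq\delta^{-2}$ is backwards, since $\delta>1/k$ there gives $k^2\geq\delta^{-2}$. What you actually need is the upper edge of the band, $\delta<2/k$, which yields $k^2\leq 4\delta^{-2}$, so the displayed estimate holds with an extra factor of $4$:
\[ k^2 \int_{\{1/k<\delta<2/k\}} w^2\,\mathrm{d}x \;\leq\; 4\int_{\{\delta<2/k\}} \frac{w^2}{\delta^2}\,\mathrm{d}x. \]
The conclusion is unaffected: by Hardy's inequality $w^2/\delta^2\in L^1(\Om)$, the sets $\{\delta<2/k\}$ decrease to the empty set, and absolute continuity of the integral still drives the right-hand side to $0$.
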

\noi For each $u\in H_{+,q}$ we define the fiber map $\phi_u:\mb R^+ \rightarrow \mb R$ by $\phi_u(t)=I_\la(tu)$. Then we prove the following:
\begin{Theorem} \label{mainthrm1}
Assume $0<q < 1$ and let $\Lambda$ be a constant defined by
\begin{equation*}
\begin{split}
\Lambda = & \sup \left\{\la >0:\text{ for each}  \; u\in H_{+,q}\backslash\{0\}, ~\phi_u(t)~  \text{has two critical points in}  ~(0, \infty)\right.\\
 &\left.\text{and}\; \sup\left\{ \int_{\Om}|\nabla u|^2~\mathrm{d}x\; : \; u \in H_{+,q}, \phi^{\prime}_u(1)=0,\;\phi^{\prime\prime}_u(1)>0 \right\} \leq (2^*_\mu S_{H,L}^{2^*_\mu})^{\frac{1}{2^*_\mu-1}} \right\}.
\end{split}
\end{equation*}
Then $\La >0$.
\end{Theorem}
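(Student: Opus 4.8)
The plan is to exhibit an explicit $\lambda_* > 0$ such that both requirements in the definition of $\Lambda$ hold for every $\lambda \in (0,\lambda_*)$; this shows the defining set is nonempty, whence $\Lambda \ge \lambda_* > 0$. For $u \in H_{+,q}$ write $a_u = \int_\Om |\nabla u|^2\,\mathrm{d}x$, $b_u = \int_\Om |u|^{1-q}\,\mathrm{d}x$ and $c_u = \int_\Om\int_\Om |u(x)|^{2^*_\mu}|u(y)|^{2^*_\mu}|x-y|^{-\mu}\,\mathrm{d}x\mathrm{d}y$, all strictly positive for $u \not\equiv 0$, so that $\phi_u(t) = \tfrac{t^2}{2}a_u - \tfrac{\lambda t^{1-q}}{1-q}b_u - \tfrac{t^{22^*_\mu}}{22^*_\mu}c_u$. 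Since $\phi_u'(t) = t^{-q}\big(g_u(t) - \lambda b_u\big)$ with $g_u(t) := a_u t^{1+q} - c_u t^{22^*_\mu - 1 + q}$, the positive critical points of $\phi_u$ are exactly the solutions of $g_u(t) = \lambda b_u$. The function $g_u$ vanishes at $0$, tends to $-\infty$, and has a single interior maximum at $\bar t_u = \big((1+q)a_u/((22^*_\mu-1+q)c_u)\big)^{1/(22^*_\mu-2)}$; hence $\phi_u$ has exactly two critical points if and only if $\lambda b_u < M_u := g_u(\bar t_u)$, and a direct computation gives $M_u = \kappa\, a_u^{(22^*_\mu-1+q)/(22^*_\mu-2)} c_u^{-(1+q)/(22^*_\mu-2)}$ for an explicit constant $\kappa = \kappa(n,\mu,q) > 0$.

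First I establish the two-critical-point requirement. The quantity $M_u/b_u$ is invariant under the scaling $u \mapsto su$ (both $M_u$ and $b_u$ are $(1-q)$-homogeneous), so its infimum over $H_{+,q}$ equals the infimum over $\{a_u = 1\}$. On this set $M_u = \kappa\, c_u^{-(1+q)/(22^*_\mu-2)}$, while the Hardy--Littlewood--Sobolev inequality gives $c_u \le S_{H,L}^{-2^*_\mu}a_u^{2^*_\mu} = S_{H,L}^{-2^*_\mu}$ and the Sobolev embedding gives $b_u \le C_{1-q}$. Therefore $M_u/b_u \ge \kappa\, S_{H,L}^{2^*_\mu(1+q)/(22^*_\mu-2)}/C_{1-q} =: \lambda_1 > 0$, and for every $\lambda < \lambda_1$ the inequality $\lambda b_u < M_u$ holds for all $u$, so $\phi_u$ has two critical points on $(0,\infty)$.

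Next I bound $\|u\|^2$ on the set $N^+ = \{u : \phi_u'(1)=0,\ \phi_u''(1)>0\}$. By the scaling above each such $u$ equals $t_1(v)\,v$ with $\|v\|=1$, where $t_1(v) < \bar t_v$ is the smaller root of $g_v(t) = \lambda b_v$ and $\|u\|^2 = t_1(v)^2$. Since $\bar t_v$ is the maximum point, $c_v\, t^{22^*_\mu - 2} \le c_v\,\bar t_v^{\,22^*_\mu-2} = (1+q)/(22^*_\mu-1+q)$ for all $t \le \bar t_v$, which yields the uniform lower bound $g_v(t) = t^{1+q}\big(1 - c_v t^{22^*_\mu-2}\big) \ge t^{1+q}\tfrac{22^*_\mu-2}{22^*_\mu-1+q}$ on $(0,\bar t_v]$. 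Evaluating at $t = t_1(v)$ and using $b_v \le C_{1-q}$ gives
\begin{equation*}
t_1(v)^{1+q}\,\frac{22^*_\mu-2}{22^*_\mu-1+q} \le g_v(t_1(v)) = \lambda b_v \le \lambda C_{1-q},
\end{equation*}
so $t_1(v)^2 \le (c_0\lambda)^{2/(1+q)}$ with $c_0 = C_{1-q}(22^*_\mu-1+q)/(22^*_\mu-2)$, uniformly in $v$. As $(c_0\lambda)^{2/(1+q)} \to 0$ when $\lambda \to 0^+$, there is $\lambda_2 > 0$ with $(c_0\lambda)^{2/(1+q)} \le (2^*_\mu S_{H,L}^{2^*_\mu})^{1/(2^*_\mu-1)}$ for all $\lambda < \lambda_2$, which is precisely the second requirement.

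Taking $\lambda_* = \min\{\lambda_1,\lambda_2\}$ shows $(0,\lambda_*)$ lies in the defining set of $\Lambda$, hence $\Lambda \ge \lambda_* > 0$. The main obstacle is the uniformity over the infinite-dimensional family $H_{+,q}$: because $c_u$ is not bounded below on the unit sphere, the maximum point $\bar t_v$ need not be uniformly bounded, so a crude control of $t_1(v)$ through $\bar t_v$ alone fails. The decisive points are the scale-invariance of $M_u/b_u$ and the one-sided estimate $g_v \ge \tfrac{22^*_\mu-2}{22^*_\mu-1+q}\,t^{1+q}$ on $(0,\bar t_v]$, which together convert the Hardy--Littlewood--Sobolev and Sobolev bounds on $c_u,b_u$ into the required $\lambda$-uniform control of both the critical values and the Nehari norms.
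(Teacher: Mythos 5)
Your proof is correct, and its core is the same fibering-map threshold computation the paper uses: your criterion $\la b_u < M_u$ and the constant $\la_1$ reproduce the paper's Lemma \ref{lem3.2} (the paper bounds $c_u$ via the norm-equivalence Lemma \ref{singeqnr}, with constants $C(n,\mu)$ and $C_{2^*}$, where you invoke $S_{H,L}$ directly; both are legitimate). The genuine difference is completeness rather than method: the paper's proof of Theorem \ref{mainthrm1} is a single line citing Lemma \ref{lem3.2}, which only yields the two-critical-points requirement, and it never explicitly verifies the second requirement in the definition of $\La$, namely $\sup\left\{ \|u\|^2 : \phi^{\prime}_u(1)=0,\ \phi^{\prime\prime}_u(1)>0 \right\} \leq (2^*_\mu S_{H,L}^{2^*_\mu})^{\frac{1}{2^*_\mu-1}}$. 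In the paper that verification is implicitly supplied only later, by Lemma \ref{le01}$(i)$, whose bound $\|u\| \leq \left( \la (22^*_\mu-1+q)C_{1-q}/(22^*_\mu-2)\right)^{\frac{1}{1+q}}$ is exactly your estimate $t_1(v)^2 \leq (c_0\la)^{2/(1+q)}$; your derivation via $g_v(t) \geq \frac{22^*_\mu-2}{22^*_\mu-1+q}\,t^{1+q}$ on $(0,\bar t_v]$ is algebraically equivalent to the paper's direct manipulation of $\phi^{\prime\prime}_u(1)>0$ using $\phi^{\prime}_u(1)=0$ to eliminate the nonlocal term. So you have correctly filled in, within the proof itself, the step the paper leaves implicit, and your scaling observation (that $M_u/b_u$ is invariant under $u\mapsto su$, and that membership in $\mc N_\la^+$ pins $\|u\|$ to the smaller root $t_1$) makes the uniformity over $H_{+,q}$ transparent.
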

Using the variational methods on the Nehari manifold, we will prove the following multiplicity result.
\begin{Theorem}\label{mainthrm2}
For all $\la \in (0, \Lambda)$, $(P_\la)$ has two positive weak solutions $u_\la$ and $v_\la$ in $C^\infty(\Om)\cap L^{\infty}(\Om)$.
\end{Theorem}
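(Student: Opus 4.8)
The strategy is to produce the two solutions as minimizers of $I_\la$ over the two pieces of the Nehari manifold $\mc{N}_\la = \{u \in H_{+,q} : \phi_u'(1)=0\}$. Writing $A=\|u\|^2$, $B=\int_\Om|u|^{1-q}$ and $C=\int_\Om\int_\Om\frac{|u(x)|^{2^*_\mu}|u(y)|^{2^*_\mu}}{|x-y|^\mu}$, one has $\phi_u'(t)=t^{-q}(\psi_u(t)-\la B)$ with $\psi_u(t)=t^{1+q}A-t^{2\cdot 2^*_\mu-1+q}C$, and $\psi_u$ rises from $0$ to a unique maximum and then decreases to $-\infty$. Hence for $\la\in(0,\Lambda)$ the definition of $\Lambda$ in Theorem \ref{mainthrm1} gives each $u\in H_{+,q}$ exactly two fiber critical points $0<t_1(u)<t_2(u)$, with $t_1(u)u\in\mc{N}_\la^+$ ($\phi_u''>0$) and $t_2(u)u\in\mc{N}_\la^-$ ($\phi_u''<0$); thus $\mc{N}_\la=\mc{N}_\la^+\cup\mc{N}_\la^-$ with $\mc{N}_\la^0=\emptyset$. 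Substituting $A=\la B+C$ into $I_\la$ gives $I_\la(u)=(\tfrac12-\tfrac{1}{2\cdot 2^*_\mu})\|u\|^2-\la(\tfrac{1}{1-q}-\tfrac{1}{2\cdot 2^*_\mu})\int_\Om|u|^{1-q}$, which with $\int_\Om|u|^{1-q}\le C_{1-q}\|u\|^{1-q}$ and $1-q<2$ shows $I_\la$ is coercive and bounded below on $\mc{N}_\la$. I set $\theta_\la^\pm=\inf_{\mc{N}_\la^\pm}I_\la$ and note $\theta_\la^+<0$ since $\phi_u(t_1(u))<\phi_u(0)=0$.

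For the first solution I take a minimizing sequence $\{u_k\}\subset\mc{N}_\la^+$, which is bounded by coercivity, so $u_k\rightharpoonup u_\la$ in $H^1_0(\Om)$ and $u_k\to u_\la$ in $L^r$ for $r<2^*$ and a.e. The singular term is subcritical and passes to the limit; since $\theta_\la^+<0<c^*$, where $c^*=\frac{n+2-\mu}{2(2n-\mu)}S_{H,L}^{\frac{2n-\mu}{n+2-\mu}}$ is the compactness threshold, a Brezis--Lieb splitting of the Hardy--Littlewood--Sobolev term excludes concentration: any nonzero bubble would add energy at least $c^*$ and violate $I_\la(u_k)\to\theta_\la^+$. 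Thus $u_k\to u_\la$ strongly, $u_\la\in\mc{N}_\la^+\setminus\{0\}$ and $I_\la(u_\la)=\theta_\la^+$. The genuine difficulty is that $I_\la$ is not differentiable, so I recover the equation by the Hirano--Saccon--Shioji variational-inequality method: since $\mc{N}_\la^0=\emptyset$, $u_\la$ is a local minimizer of $I_\la$ on $H_{+,q}$, so for $\psi\in H_+$ one has $\liminf_{t\to 0^+}t^{-1}(I_\la(u_\la+t\psi)-I_\la(u_\la))\ge 0$; Fatou's lemma on the singular term and the monotone approximation of Lemma \ref{lem2.1} then yield $\int_\Om\na u_\la\na\psi-\la\int_\Om u_\la^{-q}\psi-\int_\Om(\int_\Om\frac{|u_\la(y)|^{2^*_\mu}}{|x-y|^\mu}\mathrm{d}y)u_\la^{2^*_\mu-1}\psi\ge 0$ for all $\psi\ge 0$, and in particular $u_\la^{-q}\psi\in L^1(\Om)$. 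Testing with $\psi=u_\la$ and with $(u_\la+\e\varphi)^+$ for $\varphi\in C_c^\infty(\Om)$ and letting $\e\to 0^+$ upgrades this to an equality for every test function, so $u_\la$ is a weak solution; $-\De u_\la\ge 0$ and $u_\la\not\equiv 0$ give $u_\la>0$ by the strong maximum principle.

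The second solution $v_\la$ I obtain by minimizing $I_\la$ over $\mc{N}_\la^-$, and here the critical exponent makes compactness the heart of the matter. The decisive step is a level estimate: with the truncated extremals $u_\e=\eta U_\e$ (where $\int_{\mb R^n}|\na U_\e|^2=S^{n/2}$ and $\eta$ is a cut-off), I estimate $\sup_{t\ge 0}I_\la(u_\la+tu_\e)$ through the Gao--Yang-type asymptotics of $\int|\na u_\e|^2$, of the Hardy--Littlewood--Sobolev energy of $u_\e$, and of the cross terms with $u_\la$, and show that for $\e$ small this supremum is strictly below $\theta_\la^++c^*$. This strict gap yields a local Palais--Smale condition below $\theta_\la^++c^*$: by Brezis--Lieb for the nonlocal term, any loss of compactness carries an energy quantum $\ge c^*$ above the energy $\ge\theta_\la^+$ of the weak-limit solution, contradicting the gap. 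Hence $\theta_\la^-$ is attained at some $v_\la\in\mc{N}_\la^-$, which the same variational-inequality argument makes a positive weak solution; since $\mc{N}_\la^+$ and $\mc{N}_\la^-$ are disjoint, $v_\la\ne u_\la$. I expect this sub-threshold level estimate, performed with the nonlocal nonlinearity and in the presence of the singular term, to be the main obstacle of the whole proof.

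It remains to prove regularity. For the $L^\infty$ bound I use Theorem \ref{bddbel}: a nonlocal Brezis--Kato/Moser iteration, controlling the Riesz potential $V(x)=\int_\Om\frac{|u|^{2^*_\mu}}{|x-y|^\mu}\mathrm{d}y$ by the Hardy--Littlewood--Sobolev inequality, boosts the integrability of the critical term $Vu^{2^*_\mu-1}$ into some $L^{\alpha/2}(\Om)$ with $\alpha>n$, while the singular term $\la u^{-q}$ with $0<q<1$ contributes a bounded function (its potential being controlled near $\partial\Om$ through the distance $\delta(x)$); Theorem \ref{bddbel} then gives the upper bound and $u\ge 0$ the lower one, so $u_\la,v_\la\in L^\infty(\Om)$. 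For interior smoothness, on each $K\subset\subset\Om$ positivity gives $u\ge c_K>0$, so $u^{-q}$ is as regular as $u$ there, and $V$ is H\"older continuous with regularity improving together with that of $u$ (convolution with $|x|^{-\mu}$, $\mu<n$, gains regularity); applying $L^p$- and Schauder-bootstrapping to $-\De u=\la u^{-q}+Vu^{2^*_\mu-1}$ then gives $u_\la,v_\la\in C^\infty(\Om)\cap L^\infty(\Om)$.
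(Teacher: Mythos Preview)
Your overall architecture matches the paper almost exactly: decompose the Nehari manifold via the fiber maps, minimize $I_\la$ on $\mc N_\la^+$ and $\mc N_\la^-$, use Brezis--Lieb together with the nonlocal splitting lemma for compactness, obtain the sub-threshold level with the truncated Talenti bubbles $\eta U_\e$ and the Gao--Yang estimates, recover the equation from the minimizers by the Hirano--Saccon--Shioji variational-inequality argument, and then push to $L^\infty$ and $C^\infty(\Om)$ by Moser iteration and bootstrap. The paper's formal proof of Theorem~\ref{mainthrm2} literally just cites Propositions~\ref{prp4.2}, \ref{prp5.4} and Lemma~\ref{L-infty}, which are exactly the pieces you outline.

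There is one point where your sketch is too quick and where the paper's proof differs in a way that matters. For the compactness of the minimizing sequence on $\mc N_\la^+$ you write ``since $\theta_\la^+<0<c^*$, \dots\ any nonzero bubble would add energy at least $c^*$''. That heuristic is not enough here: after Brezis--Lieb you only have $\phi_{u_\la}'(1)+c^2-d^{2\cdot 2^*_\mu}=0$ and $S_{H,L}d^2\le c^2$, and the weak limit $u_\la$ need not lie in $\mc N_\la^+$, so you cannot simply compare $I_\la(u_\la)$ with $\theta_\la^+$ and subtract a quantum $c^*$. The paper instead runs a three-case analysis on the fiber critical points $t_1<t_2$ of $\phi_{u_\la}$ (Proposition~\ref{minattain1}); the case $t_2\ge 1$ with $c^2/2-d^{2\cdot 2^*_\mu}/(2\cdot 2^*_\mu)<0$ is ruled out precisely by the \emph{second} clause in the definition of $\Lambda$, namely $\sup\{\|u\|^2:u\in\mc N_\la^+\}\le (2^*_\mu S_{H,L}^{2^*_\mu})^{1/(2^*_\mu-1)}$, which forces $c^2$ to exceed that supremum, a contradiction. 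Your argument never invokes this clause, and without it the compactness on $\mc N_\la^+$ has a genuine gap. Apart from this, your proposal is a faithful outline of the paper's proof.
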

We also have that if $u$ is a positive weak solution of $(P_\la)$, then $u$ is a classical solution in the sense that $u \in C^\infty(\Om) \cap C(\bar \Om)$.
\begin{Theorem}\label{mainthrm3}
Let $u$ be a positive weak solution of $(P_\la)$, then there exist $K,\;L>0$ such that $L\delta \leq u \leq K\delta$ in $\Om$.
\end{Theorem}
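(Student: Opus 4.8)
The plan is to establish the two-sided bound by constructing explicit sub- and super-solutions that are comparable to $\delta$ and then invoking the weak comparison principle separately for each inequality. Throughout I will use that $\partial\Om$ is smooth, so $\delta$ is smooth in a boundary strip $\Om_{a_0}=\{\delta<a_0\}$ with $|\na\delta|=1$ and $\De\delta$ bounded there, and that the torsion function $e$ (solving $-\De e=1$ in $\Om$, $e=0$ on $\partial\Om$) satisfies $c_1\delta\le e\le c_2\delta$ for constants $c_1,c_2>0$. I also take $u\in L^\infty(\Om)$ from the regularity already recorded for positive weak solutions (indeed Theorem \ref{mainthrm2} gives this for $u_\la,v_\la$).

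For the lower bound I will exploit that $u$ is superharmonic: since $u>0$, both terms on the right are nonnegative, so $-\De u\ge\la u^{-q}\ge0$ weakly. Set $\underline{u}=Le$ and choose $L>0$ small enough that $L\le\la(Lc_2 d)^{-q}$, where $d=\operatorname{diam}\Om$, i.e. $L^{1+q}\le\la(c_2 d)^{-q}$. On the set $\omega=\{u<\underline{u}\}$ one has $u<Lc_2\delta\le Lc_2 d$, hence $\la u^{-q}>\la(Lc_2 d)^{-q}\ge L=-\De\underline{u}$, so that $-\De(u-\underline{u})\ge0$ on $\omega$ while $u-\underline{u}\ge0$ on $\partial\omega$. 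Testing with $(\underline{u}-u)^+$, which is supported in $\omega$, forces $\|\na(\underline{u}-u)^+\|=0$, so $\omega$ is empty and $u\ge Le\ge Lc_1\delta$; this gives the lower bound with $L_0=Lc_1$.

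For the upper bound I will first use the lower bound to tame the singularity: $u^{-q}\le L_0^{-q}\delta^{-q}$, and since $u\in L^\infty(\Om)$ and $\mu<n$ the Choquard factor $\big(\int_\Om|u|^{2^*_\mu}|x-y|^{-\mu}\,\mathrm{d}y\big)|u|^{2^*_\mu-2}u$ is bounded by some $C_0$. Thus $-\De u\le\la L_0^{-q}\delta^{-q}+C_0$ weakly. In $\Om_{a_0}$ I will take the barrier $\bar u=A\delta-B\delta^{2-q}$; a direct computation yields $-\De\bar u=B(2-q)(1-q)\delta^{-q}+O(1)+O(\delta^{1-q})$, whose leading term blows up like $\delta^{-q}$. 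Choosing $B$ large (so that $B(2-q)(1-q)\ge\la L_0^{-q}$ and the bounded remainder absorbs $C_0$) and then $a_0$ small makes $\bar u$ a super-solution on $\Om_{a_0}$, while choosing $A$ large ensures $\bar u\ge0$, $\bar u\le A\delta$, and $\bar u\ge u$ on $\{\delta=a_0\}$ via $\|u\|_\infty$. Comparison on $\Om_{a_0}$, tested with $(u-\bar u)^+$, gives $u\le\bar u\le A\delta$ near $\partial\Om$; on $\{\delta\ge a_0\}$ the trivial bound $u\le\|u\|_\infty\le(\|u\|_\infty/a_0)\delta$ completes the estimate with a suitable $K$.

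The routine part is the algebra in the barrier computation; the genuine subtleties are twofold. First, justifying the weak comparisons in the presence of $u^{-q}$ requires that $u^{-q}$ times the test functions be integrable, which I obtain from the lower bound $u\ge L_0\delta$ together with $q<1$ (so $\delta^{-q}\in L^1$). Second, and this is the crux, the super-solution must be designed so that $-\De\bar u$ carries a $\delta^{-q}$ singularity while $\bar u$ itself still vanishes only linearly: this works precisely because $q<1$ makes the corrector exponent $2-q>1$, so $\delta^{2-q}=o(\delta)$ yet $\De(\delta^{2-q})\sim\delta^{-q}$. This is exactly the mechanism forcing the linear vanishing rate $\delta$ rather than a faster one. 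A secondary point is the boundedness $u\in L^\infty(\Om)$, which I import from the regularity analysis but which itself rests on controlling the critical Choquard nonlinearity once the singular term has been dominated by the lower bound.
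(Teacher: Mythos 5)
Your two-sided strategy does prove the theorem, but the two halves sit quite differently relative to the paper. Your upper bound is essentially the paper's own argument (Theorem \ref{distrel}): the paper's barrier $\varrho(\delta)=\varrho_0(\alpha\delta)$, $\varrho_0(t)=\bar h(2t-t^{2-s})$, is exactly of your form $A\delta-B\delta^{2-s}$ (capped at height $\bar h$), and the mechanism you highlight --- the corrector $\delta^{2-q}$ vanishing faster than $\delta$ while its Laplacian blows up like $\delta^{-q}$ --- is precisely the paper's. The differences are that the paper assumes only the distributional bound $\De\delta\le R\delta^{-s}$ on a strip (supplied by an exterior ball condition, hence by smoothness), where you use boundedness of $\De\delta$, and that the paper compares $-\De\varrho\ge k_2\varrho^{-q}$ against $-\De u\le k_2 u^{-q}$, so its barrier can be capped at $\bar h>\|u\|_\infty$ and no matching constant of size $1/a_0$ is needed. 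Your lower bound, by contrast, is a genuinely different route: the paper simply applies the Brezis--Nirenberg Hopf-type lemma (Lemma \ref{Hops}) to $-\De u\ge 0$, whereas you build the explicit subsolution $Le$ from the torsion function and run a weak comparison. That is self-contained, though the input $e\ge c_1\delta$ is itself Hopf's lemma, so the mathematical content is the same; what it buys is independence from the cited lemma, at the price of more bookkeeping.

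Two points need repair. First, your justification of the weak comparisons is circular in the lower bound: you cannot invoke $u\ge L_0\delta$ to legitimize the testing that proves $u\ge L_0\delta$. The correct tool is Lemma \ref{reg} (via the monotone approximation of Lemma \ref{lem2.1}), which shows the equation holds against every $w\in H^1_0(\Om)$ with $u^{-q}w\in L^1(\Om)$; alternatively, use $\la u^{-q}\ge\la\|u\|_\infty^{-q}$ (Lemma \ref{L-infty}) so no singular term is ever tested in the lower-bound comparison. Second, your order of constants in the upper bound is backwards. Boundary matching forces $A\ge\|u\|_\infty/a_0$, and the term $-A\De\delta$ sits inside the supersolution inequality; since $q<1$, the available excess $\bigl[B(2-q)(1-q)-\la L_0^{-q}\bigr]\delta^{-q}\ge C a_0^{-q}$ cannot absorb a term of size $a_0^{-1}$ if you fix $B$ first and then shrink $a_0$ (this genuinely fails where $\De\delta>0$, i.e.\ for nonconvex $\Om$). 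The fix is to fix $a_0$ first and then take $B\gtrsim a_0^{q-1}$ (and only then $A$); this coupling of strip width, slope and corrector is exactly what the paper's single steepness parameter $\alpha$ enforces automatically, with $a_0=1/\alpha$, $A\sim\alpha\bar h$, $B\sim\alpha^{2-s}\bar h$.
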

\begin{Remark}
\textcolor{red}{\textbf{Doubly nonlocal problems: }We remark here that the doubly nonlocal problems of the the type
\begin{equation*}
(Q_\la)\quad \quad (-\De)^s u = \la u^{-q} + \left( \int_{\Om}\frac{|u|^{2^*_{\mu,s}}}{|x-y|^{\mu}}\mathrm{d}y \right)u^{2^*_{\mu,s}-1}, \quad u>0 \; \text{in}\;
\Om,\quad u = 0 \; \mbox{in}\; \mb R^n \setminus \Om,
\end{equation*}
where $0<s<1$ , $2^*_{\mu,s}= (2n-\mu)/(n-2s)$ and
$$ (-\De)^s u(x) = -\mathrm{P.V.}\int_{\mb R^n} \frac{u(x)-u(y)}{\vert x-y\vert^{n+2s}}\,\mathrm{d}y$$
({up to a normalizing constant}), $\mathrm{P.V.}$ denotes the Cauchy principal value, can be dealt in a similar manner to obtain the existence and multiplicity results. We refer \cite{TS2,TS1,TS3,TS4} in this context, where fractional laplacian with singular or choquard type nonlinearity has been studied. Also, we refer \cite{GMS,MR,repo1,repo2,ZZX} to readers and references therein. The underlying function space for $(Q_\la)$ is
$$X= \left\{u|\;u:\mb R^n \ra\mb R \;\text{is measurable},\;
u|_{\Om} \in L^2(\Om)\;
 \text{and}\;  \frac{(u(x)- u(y))}{ |x-y|^{\frac{n}{2}+s}}\in
L^2(Q)\right\},$$
\noi where $Q=\mb R^{2n}\setminus(\mc C\Om\times \mc C\Om)$ and
 $\mc C\Om := \mb R^n\setminus\Om$ endowed with the norm
 \[\|u\|_X = \|u\|_{L^2(\Om)} + \left( \int_{Q}\frac{|u(x)-u(y)|^{2}}{|x-y|^{n+2s}}\,\mathrm{d}x\mathrm{d}y\right)^{\frac12}.\]
 Then, $ X_0 = \{u\in X : u = 0 \;\text{a.e. in}\; \mb R^n\setminus \Om\}$ forms a Hilbert space with the norm $\|u\|=\left( \int_{Q}\frac{|u(x)-u(y)|^{2}}{|x-y|^{n+2s}}\,\mathrm{d}x\mathrm{d}y\right)^{\frac12}$. We say $u \in X_0$ is a weak solution of $(Q_\la)$ if
\begin{equation*}
\begin{split}
&\int_{Q}\frac{(u(x)-u(y))(\varphi(x)-\varphi(y))}{|x-y|^{n+2s}}~\mathrm{d}x\mathrm{d}y\\
&\quad = \la \int_{\Om}u^{-q}\varphi ~dx + \int_{\Om}\int_{\Om}\frac{|u(y)|^{2^*_{\mu,s}}|u(x)|^{2^*_{\mu,s}-2}u(x)\varphi(x)}{|x-y|^{\mu}}~\mathrm{d}y\mathrm{d}x,
\end{split}
\end{equation*}
for every $\varphi \in C_c^\infty(\Om)$. The energy functional $J_{\la}: X_0 \rightarrow \mb R^n$ is
\[J_{\la}(u):= \frac{\|u\|^2}{2} -\frac{\la}{1-q}\int_{\Om}|u|^{1-q}\mathrm{d}x- \frac{1}{22^*_{\mu,s}}\int_{\Om}\int_{\Om}\frac{|u(x)|^{2^*_{\mu,s}}|u(y)|^{2^*_{\mu,s}}}{|x-y|^{\mu}}~\mathrm{d}x\mathrm{d}y
.\]
For each $u\in X_{+,q}$, we define the fiber map $\phi_u:\mb R^+ \rightarrow \mb R$ by $\phi_u(t)=J_\la(tu)$. Now using the similar ideas as in section 3 one can define the Nehari manifold  $\mathcal{N}_\la$ and its decompositions $\mathcal{N}^{+}_{\la}$ and $\mathcal{N}^{-}_{\la}$. By minimising $J_\la$ over $\mathcal{N}^{+}_{\la}$, we can show the existence of first solution exactly as in  Proposition \ref{minattain1} and Proposition \ref{prp4.2}. To obtain the second solution, we can use the minimizers of
 \begin{equation*}
 S^H_s := \inf\left\{ \int_{\mb R^{2n}}\frac{|u(x)-u(y)|^2}{|x-y|^{n+2s}}{\,\mathrm{d}x\mathrm{d}y}\;; \; u \in {H^s(\mb R^n)},\;\int_{\mb R^{2n}}\frac{|u(x)|^{2^*_{\mu,s}}|u(y)|^{2^*_{\mu,s}}}{|x-y|^{\mu}}\mathrm{d}x\mathrm{d}y=1\right\}
 \end{equation*}
which are of the form $C\left( \frac{t}{t^2+|x|^2}\right)^{\frac{n-2s}{2}}, \; \; \text{for all}\; x \in \mb R^n$, where $C>0$ is constant and $t>0$. The asymptotic estimates involving these minimizers for Palais Smale analysis can be found in \cite{TS4}.
Following the   Proposition \ref{minattain2} and Proposition \ref{prp5.4} one can show the existence of minimizer of $J_\la$ over  $\mathcal{N}^{-}_{\la}$. The regularity of weak solutions of $(Q_\la)$ as obtained in Theorem \ref{mainthrm3} is not clear in this case. We leave this as an open question.}
\end{Remark}

\section{Nehari Manifold and Fibering Map analysis}
We denote $I_{\la} = I$ for simplicity. In this section, we describe the structure of Nehari Manifold associated to the functional $I$.  One can easily verify that the energy functional $I$ is not bounded below on the space $H^1_0(\Om)$. But we will show that $I$ is bounded below on this Nehari manifold and we will extract solutions by minimizing the functional on suitable subsets. The Nehari manifold is defined as
\[ \mc N_{\la} = \{ u \in H_{+,q} | \left\langle I^{\prime}(u),u\right\rangle = 0 \}. \]
\begin{Theorem}
$I$ is coercive and bounded below on $\mc N_{\la}$.
\end{Theorem}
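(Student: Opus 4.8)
The plan is to use the Nehari constraint to eliminate the nonlocal critical term from $I$, after which coercivity and boundedness below follow from elementary estimates. First I would record the fiber map computation: abbreviating $B(u) = \int_\Om\int_\Om \frac{|u(x)|^{2^*_\mu}|u(y)|^{2^*_\mu}}{|x-y|^{\mu}}\,\mathrm{d}x\,\mathrm{d}y$, for $u \in H_{+,q}$ one has
\[ \phi_u(t) = I(tu) = \frac{t^2}{2}\int_\Om |\na u|^2\,\mathrm{d}x - \frac{\la t^{1-q}}{1-q}\int_\Om |u|^{1-q}\,\mathrm{d}x - \frac{t^{2\cdot 2^*_\mu}}{2\cdot 2^*_\mu}B(u). \]
Differentiating in $t$ and evaluating at $t=1$ gives $\ld I^{\prime}(u),u\rd = \phi^{\prime}_u(1) = \|u\|^2 - \la\int_\Om|u|^{1-q}\,\mathrm{d}x - B(u)$, so membership $u \in \mc N_\la$ is precisely the identity $B(u) = \|u\|^2 - \la\int_\Om|u|^{1-q}\,\mathrm{d}x$.

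Next I would substitute this identity into $I(u)$ to remove the $B(u)$ term. For $u \in \mc N_\la$ this yields
\[ I(u) = \frac{1}{2}\|u\|^2 - \frac{\la}{1-q}\int_\Om |u|^{1-q}\,\mathrm{d}x - \frac{1}{2\cdot 2^*_\mu}\left(\|u\|^2 - \la\int_\Om |u|^{1-q}\,\mathrm{d}x\right) = \left(\frac12 - \frac{1}{2\cdot 2^*_\mu}\right)\|u\|^2 - \la\left(\frac{1}{1-q} - \frac{1}{2\cdot 2^*_\mu}\right)\int_\Om |u|^{1-q}\,\mathrm{d}x. \]
The crucial point is that both bracketed coefficients are strictly positive: since $0<\mu<n$ and $n>2$ we have $2^*_\mu = \frac{2n-\mu}{n-2} > 1$, whence $\frac12 - \frac{1}{2\cdot 2^*_\mu} > 0$; and since $0<q<1$ we have $\frac{1}{1-q} > 1 > \frac{1}{2\cdot 2^*_\mu}$.

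Finally I would control the subcritical integral using the embedding estimate recorded in Section 2, namely $\int_\Om |u|^{1-q}\,\mathrm{d}x \leq C_{1-q}\|u\|^{1-q}$. Writing $c_1 = \frac12 - \frac{1}{2\cdot 2^*_\mu}$ and $c_2 = \frac{1}{1-q} - \frac{1}{2\cdot 2^*_\mu}$, both positive, this gives
\[ I(u) \geq c_1\|u\|^2 - \la c_2 C_{1-q}\|u\|^{1-q} \quad \text{for all } u \in \mc N_\la. \]
Because $0 < 1-q < 2$, the one-variable function $t \mapsto c_1 t^2 - \la c_2 C_{1-q}\,t^{1-q}$ tends to $+\infty$ as $t\to\infty$ and attains a finite minimum on $[0,\infty)$ (minimizing over $t$ gives an explicit lower bound attained at $t_* = (\la c_2 (1-q) C_{1-q}/(2c_1))^{1/(1+q)}$); this delivers coercivity and boundedness below simultaneously.

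There is no serious obstacle in this argument. The one thing to verify carefully is the positivity of the two coefficients, which is exactly where the hypotheses $2^*_\mu > 1$ and $q \in (0,1)$ are used; once the Nehari identity is invoked to cancel the nonlocal term, what remains is a routine Sobolev estimate together with the elementary fact that a power less than the quadratic one cannot prevent coercivity.
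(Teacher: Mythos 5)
Your proof is correct and follows exactly the paper's argument: both use the Nehari constraint $\|u\|^2 - \la\int_\Om|u|^{1-q}\,\mathrm{d}x - B(u) = 0$ to eliminate the nonlocal term from $I(u)$, arrive at the identity $I(u) = \left(\tfrac12 - \tfrac{1}{2\cdot 2^*_\mu}\right)\|u\|^2 - \la\left(\tfrac{1}{1-q} - \tfrac{1}{2\cdot 2^*_\mu}\right)\int_\Om|u|^{1-q}\,\mathrm{d}x$, and then apply the embedding bound $\int_\Om|u|^{1-q}\,\mathrm{d}x \leq C_{1-q}\|u\|^{1-q}$ together with the fact that the exponent $1-q < 2$ cannot defeat the quadratic term. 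Your write-up is in fact more careful than the paper's (which leaves the coefficient positivity and the one-variable minimization implicit, and contains a harmless notational slip writing $2^*_s$ for $2^*_\mu$), but there is no difference in method.
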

\begin{proof} Since $u\in \mc N_\la$, using the embedding of $H^1_0(\Om)$ in $L^{1-q}(\Om)$, we obtain
\begin{equation*}
\begin{split}
I(u) & = \left(\frac{1}{2}-\frac{1}{22^{*}_{s}}\right)\|u\|^2- \la \left(\frac{1}{1-q}-\frac{1}{22^*_s}\right)\int_{\Om}|u|^{1-q}\mathrm{d}x\\
& \geq c_1 \|u\|^2 - c_2 \|u\|^{1-q}
\end{split}
\end{equation*}
for some positive constants $c_1$ and $c_2$. Thus, $I$ is coercive and bounded below on $\mc N_{\la}$.
\QED
\end{proof}

\noi From the definition of fiber map $\phi_u$, we have
$$\phi_u(t)=
        \frac{t^2}{2} \|u\|^2 - \frac{t^{1-q}}{1-q} \int_{\Om} |u|^{1-q} \mathrm{d}x - \frac{t^{22^*_{\mu}}}{22^*_{\mu}}\int_{\Om}\int_{\Om}\frac{|u(x)|^{2^*_{\mu}}|u(y)|^{2^*_{\mu}}}{|x-y|^{\mu}}~\mathrm{d}x\mathrm{d}y$$
for $u \in H^1_0(\Om)\textcolor{red}{\setminus \{0\}}$ and $t>0$, which gives
\[ \phi^{\prime}_u(t) = t \|u\|^2 -\la t^{-q} \int_{\Om}|u|^{1-q} \mathrm{d}x - t^{22^*_{\mu}-1}\int_{\Om}\int_{\Om}\frac{|u(x)|^{2^*_{\mu}}|u(y)|^{2^*_{\mu}}}{|x-y|^{\mu}}~\mathrm{d}x\mathrm{d}y \]
\[\text{and }\; \phi^{\prime \prime}_u(t) = \|u\|^2 + q \la t^{-q-1} \int_{\Om}|u|^{1-q} \mathrm{d}x - (22^*_\mu-1) t^{22^*_{\mu}-2}\int_{\Om}\int_{\Om}\frac{|u(x)|^{2^*_{\mu}}|u(y)|^{2^*_{\mu}}}{|x-y|^{\mu}}~\mathrm{d}x\mathrm{d}y .\]
It is easy to see that the points in $\mc N_{\la}$ are corresponding to critical points of $\phi_{u}$ at $t=1$. So, it is natural to divide $\mc N_{\la}$ in three sets corresponding to local minima, local maxima and points of inflexion. Therefore, we define
\begin{align*}
\mc N_{\la}^{+} = & \{ u \in \mc N_{\la}|~ \phi^{\prime}_u (1) = 0,~ \phi^{\prime \prime}_u(1) > 0\}=  \{ t_0u \in \mc N_{\la} |\; t_0 > 0,~ \phi^{\prime}_u (t_0) = 0,~ \phi^{\prime \prime}_u(t_0) > 0\},\\
\mc N_{\la}^{-} = & \{ u \in \mc N_{\la} |~ \phi^{\prime}_u (1) = 0,~ \phi^{\prime \prime}_u(1) <0\}=  \{ t_0u \in \mc N_{\la} |\; t_0 > 0, ~ \phi^{\prime}_u (t_0) = 0, ~\phi^{\prime \prime}_u(t_0) < 0\}
\end{align*}
and $ \mc N_{\lambda}^{0}= \{ u \in \mc N_{\la} | \phi^{\prime}_{u}(1)=0, \phi^{\prime \prime}_{u}(1)=0 \}. $
From Lemma 2.3 of \cite{myang}, we know that
\[\|\cdot\|_0 = \left( \int_\Om \int_\Om \frac{|u(x)|^{2^*_\mu}|u(y)|^{2^*_\mu}}{|x-y|^\mu}~\mathrm{d}x\mathrm{d}y\right)^{\frac{1}{22^*_\mu}}\]
defines a norm on $L^{2^*}(\Om)$. But in the next lemma, we will show that $\|\cdot\|$ defines an equivalent norm on $L^{2^*}(\Om)$.
\begin{Lemma}\label{singeqnr}
For $n>2$ and $0<\mu <n$,
\[\|\cdot\|_0 = \left( \int_\Om \int_\Om \frac{|u(x)|^{2^*_\mu}|u(y)|^{2^*_\mu}}{|x-y|^\mu}~\mathrm{d}x\mathrm{d}y\right)^{\frac{1}{22^*_\mu}}\]
defines an equivalent norm on $L^{2^*}(\Om)$.
\end{Lemma}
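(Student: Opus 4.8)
Throughout, write $d=\mathrm{diam}(\Om)$, set $v=|u|^{2^*_\mu}\ge 0$ and $p=2^*/2^*_\mu=\tfrac{2n}{2n-\mu}$, and note $p\in(1,2)$ because $0<\mu<n$. With this notation $\int_\Om v^p=\int_\Om|u|^{2^*}=|u|_{2^*}^{2^*}$ and $\|u\|_0^{22^*_\mu}=\int_\Om\int_\Om\frac{v(x)v(y)}{|x-y|^{\mu}}\,\mathrm{d}x\mathrm{d}y$. Since Lemma $2.3$ of \cite{myang} already grants that $\|\cdot\|_0$ is a norm, the plan is only to produce constants $c,C>0$ with $c\,|u|_{2^*}\le\|u\|_0\le C\,|u|_{2^*}$ for all $u\in L^{2^*}(\Om)$, i.e. to compare $\|\cdot\|_0$ two-sidedly with the standard norm.

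The upper estimate is the direct one. Applying Proposition \ref{Hardy-littlewood} with $f=h=v$ and $t=r=p$ is legitimate, since $\tfrac1p+\tfrac{\mu}{n}+\tfrac1p=\tfrac{2n-\mu}{n}+\tfrac{\mu}{n}=2$, and it gives $\int_\Om\int_\Om\frac{v(x)v(y)}{|x-y|^{\mu}}\le C(p,n,\mu)\,|v|_p^2$. Because $2^*_\mu\,p=2^*$ we have $|v|_p=\big(\int_\Om|u|^{2^*}\big)^{1/p}=|u|_{2^*}^{2^*/p}=|u|_{2^*}^{2^*_\mu}$, so substituting yields $\|u\|_0^{22^*_\mu}\le C\,|u|_{2^*}^{22^*_\mu}$, that is $\|u\|_0\le C^{1/(22^*_\mu)}|u|_{2^*}$. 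This half rests on nothing beyond the Hardy--Littlewood--Sobolev inequality already recorded.

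The reverse estimate $|u|_{2^*}\le C'\|u\|_0$ is the heart of the lemma and is where I expect the real difficulty. In the reduced notation it asks for a \emph{lower} bound on the Riesz energy of the nonnegative density $v$, namely $\int_\Om\int_\Om\frac{v(x)v(y)}{|x-y|^{\mu}}\ge c\,|v|_p^{2}$. The naive idea of bounding the kernel below by its smallest value, $|x-y|^{-\mu}\ge d^{-\mu}$, only produces $\int\int\frac{v(x)v(y)}{|x-y|^\mu}\ge d^{-\mu}|v|_1^2$, which controls the weaker quantity $|u|_{2^*_\mu}$ and is genuinely insufficient: for functions concentrating at a point $|v|_1\to0$ while $|v|_p$ stays fixed, so in that regime the entire contribution comes from the diagonal singularity of the kernel, exactly the part this crude bound discards. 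Hence the singular behaviour of $|x-y|^{-\mu}$ near the diagonal cannot be avoided.

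To capture it I would pass to the fractional maximal function $M_{n-\mu}v(x):=\sup_{r>0}r^{-\mu}\int_{B(x,r)\cap\Om}v\,\mathrm{d}y$. For every $r>0$ one has $\int_\Om\frac{v(y)}{|x-y|^\mu}\,\mathrm{d}y\ge r^{-\mu}\int_{B(x,r)\cap\Om}v$, and taking the supremum over $r$ gives the pointwise bound $Tv(x):=\int_\Om\frac{v(y)}{|x-y|^\mu}\,\mathrm{d}y\ge M_{n-\mu}v(x)$. Consequently $\int\int\frac{v(x)v(y)}{|x-y|^\mu}=\int_\Om v\,Tv\ge\int_\Om v\,M_{n-\mu}v$, and the lemma reduces to the maximal-function estimate $\int_\Om v\,M_{n-\mu}v\ge c\,|v|_p^{2}$. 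This last inequality is consistent with the scaling of the problem — both sides are homogeneous of degree $2n-\mu$ under the dilation $v\mapsto v(\cdot/\e)$, which is precisely why concentration does not break it — and I would establish it by a Calderón--Zygmund/covering argument on $\Om$ combined with the weak-type bounds for $M_{n-\mu}$. This reverse maximal estimate is the main obstacle; an equivalent route is to prove directly that $\inf\{\|u\|_0:\ |u|_{2^*}=1\}>0$ by a concentration--compactness analysis of the scale-invariant functional $u\mapsto\int\int\frac{|u(x)|^{2^*_\mu}|u(y)|^{2^*_\mu}}{|x-y|^\mu}$, ruling out vanishing by the boundedness of $\Om$ and handling the concentration case by the strict positivity of the bubble energy. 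Either way the essential point is the same: the lower bound detects the local, singular structure of the kernel rather than merely its magnitude.
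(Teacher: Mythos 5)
Your upper estimate via Proposition \ref{Hardy-littlewood} is correct, and it is the same half that the paper proves. The genuine gap is the reverse inequality, which you correctly single out as the heart of the matter but never actually establish: both of your routes (the maximal-function estimate $\int_\Om v\,M_{n-\mu}v\geq c|v|_p^2$ and the concentration--compactness minimization) are left as plans. Worse, no argument can close this step, because the reverse inequality is \emph{false}. Take a fixed cube inside $\Om$, let $\{x_i\}_{i=1}^{K^n}$ be a grid of spacing $\approx K^{-1}$ in that cube, and set $E_K=\bigcup_i B(x_i,K^{-2})$, $u_K=\chi_{E_K}$, so $|E_K|\approx K^{-n}$. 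The diagonal pairs contribute $\approx K^n(K^{-2})^{2n-\mu}=K^{2\mu-3n}$ to the double integral, while the off-diagonal pairs contribute $\approx (K^{-2n})^2\sum_{i\neq j}|x_i-x_j|^{-\mu}\approx K^{-4n}\cdot K^{2n}=K^{-2n}$, whereas $|E_K|^{2-\mu/n}\approx K^{\mu-2n}$. Hence
\[
\frac{\|u_K\|_0^{22^*_\mu}}{|u_K|_{2^*}^{22^*_\mu}}
=\frac{\int_{E_K}\int_{E_K}|x-y|^{-\mu}\,\mathrm{d}x\mathrm{d}y}{|E_K|^{2-\mu/n}}
\lesssim K^{\mu-n}+K^{-\mu}\longrightarrow 0\quad\text{as }K\to\infty,
\]
since $0<\mu<n$. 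So $\inf\{\|u\|_0:\,|u|_{2^*}=1\}=0$ and the two norms are not equivalent. Because $M_{n-\mu}v\leq Tv$ pointwise, the same sequence refutes your maximal inequality; and it also defeats the concentration--compactness route: this is ``vanishing by fragmentation'' inside a bounded domain, which boundedness of $\Om$ does not exclude. Your scaling check only rules out single-bubble (dilation) counterexamples; under splitting into $N$ distant pieces, $|v|_p^2$ scales like $N^{2-\mu/n}$ while the Riesz energy scales only like $N$ plus weak cross terms, and $2-\mu/n>1$ is exactly the loss.

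For comparison, the paper does not attempt your reverse estimate at all: it asserts that $(L^{2^*}(\Om),\|\cdot\|_0)$ is a Banach space, records the Hardy--Littlewood--Sobolev bound, and invokes the open mapping theorem. Your diagnosis that the lower bound is the real difficulty is thus exactly right --- it is what the paper's argument buries inside the unproven completeness assertion, and the example above shows that assertion is untenable too: a complete norm dominated by $|\cdot|_{2^*}$ would be equivalent to it by the bounded inverse theorem. (The damage to the rest of the paper is limited, since in Lemma \ref{lem3.2} and Lemma \ref{le01} only the upper direction $\|u\|_0\leq C(n,\mu)^{1/(22^*_\mu)}|u|_{2^*}$ is ever used, and that part both you and the paper prove correctly.) But as statements stand, neither your proposal nor the paper's proof establishes Lemma \ref{singeqnr}, because the claimed equivalence does not hold.
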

\begin{proof}
We can easily show that $L^{2^*}(\Om)$ is a Banach space under the norm $\|\cdot\|_0$(proof can be sketched using the techniques to prove $L^p(\Om)$ is a Banach space with the usual $L^p$-norm). By Hardy-Littlewood-Sobolev inequality, we have
\[ \|u\|_0 \leq (C(n,\mu))^{\frac{1}{22^*_{\mu}}}|u|_{2^*}. \]
So, the identity map from $(L^{2^*}(\Om), \|\cdot\|_0)$ to $(L^{2^*}(\Om), |\cdot|_{2^*})$ is linear and bounded. Thus, by open mapping theorem, we obtain $\|\cdot\|_0$ is an equivalent norm with respect to the standard norm $|\cdot|_{2^*}$ on $L^{2^*}(\Om)$.\hfill{\QED}
\end{proof}

\begin{Lemma}\label{lem3.2}
There exist $\la_*>0$ such that for each $u\in H_{+,q}\backslash\{0\}$, there is unique $t_1$ and $t_2$ with the property that $t_1<t_2$, $t_1 u\in \mc N_{\la}^{+}$ and $ t_2 u\in \mc N_{\la}^{-}$, for all $\la \in (0,\la_*)$.
\end{Lemma}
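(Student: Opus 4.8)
The plan is to analyze the fiber map $\phi_u$ by isolating the competition between the singular term and the critical Choquard term. For fixed $u\in H_{+,q}\backslash\{0\}$, I would rewrite the critical equation $\phi_u'(t)=0$ by dividing through by $t^{-q}$, which converts the condition into the equation
\[
\psi_u(t) := t^{1+q}\|u\|^2 - t^{22^*_\mu-1+q}\int_\Om\int_\Om \frac{|u(x)|^{2^*_\mu}|u(y)|^{2^*_\mu}}{|x-y|^\mu}~\mathrm{d}x\mathrm{d}y = \la \int_\Om |u|^{1-q}~\mathrm{d}x.
\]
The left-hand side $\psi_u$ starts at $0$ when $t=0$, is strictly positive for small $t>0$ (since $1+q < 22^*_\mu-1+q$), and tends to $-\infty$ as $t\to\infty$. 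By computing $\psi_u'$ and showing $\psi_u$ has a unique interior critical point $t_{\max}=t_{\max}(u)$, I obtain that $\psi_u$ increases on $(0,t_{\max})$ and decreases on $(t_{\max},\infty)$, so it has a single maximum value $m_u := \psi_u(t_{\max})$.

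The key quantitative step is to bound $m_u$ below by a constant depending only on the Sobolev–Hardy–Littlewood constant, uniformly in $u$. Writing $A=\|u\|^2$ and $B=\|u\|_0^{22^*_\mu}$, the maximizer $t_{\max}$ solves $(1+q)A = (22^*_\mu-1+q)t_{\max}^{22^*_\mu-2}B$, and substituting back gives $m_u$ as an explicit expression of the form $c_{n,\mu,q}\,A^{\theta}B^{-(\theta-1)}$ for an exponent $\theta>1$. Using Lemma \ref{singeqnr} together with the definition of $S_{H,L}$, which yields $B \leq (A/S_{H,L})^{2^*_\mu}$, I would bound $m_u$ from below by a positive constant $C_0$ times a positive power of $A$; since $A=\|u\|^2$ can be made scale-invariant by first normalizing $u$, the ratio defining $m_u/\int_\Om|u|^{1-q}$ reduces the problem to a bound independent of the particular $u$. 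Thus there exists $\la_*>0$ such that whenever $\la<\la_*$ we have $\la\int_\Om|u|^{1-q}~\mathrm{d}x < m_u$ for every $u\in H_{+,q}\backslash\{0\}$.

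Granting this strict inequality, the horizontal line at height $\la\int_\Om|u|^{1-q}$ meets the graph of $\psi_u$ in exactly two points $t_1<t_{\max}<t_2$, giving two solutions of $\phi_u'(t)=0$. Finally I would check the sign of $\phi_u''$ at each: since $\psi_u'(t_1)>0$ and $\psi_u'(t_2)<0$, and $\phi_u''(t) = t^{-q-1}\big(\text{positive multiple of }\psi_u'(t)\big)$ up to a rearrangement of terms, it follows that $\phi_u''(t_1)>0$ and $\phi_u''(t_2)<0$. Hence $t_1 u\in\mc N_\la^+$ and $t_2 u\in\mc N_\la^-$, with $t_1<t_2$ and both unique. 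The main obstacle is the uniform lower bound on $m_u$: it requires carefully exploiting scale invariance and the best-constant inequality $S_{H,L}$ to make the threshold $\la_*$ independent of $u$, since a naive pointwise bound would allow $\la_*$ to degenerate to zero as $u$ varies.
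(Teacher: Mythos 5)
Your proposal is correct and follows essentially the same route as the paper: your $\psi_u$ is exactly the paper's auxiliary function $m_u$, and both arguments reduce membership in $\mc N_\la^{+}$ and $\mc N_\la^{-}$ to the two intersections of the horizontal line at height $\la\int_\Om|u|^{1-q}\,\mathrm{d}x$ with the graph of this function, the sign of its derivative at each intersection giving the sign of $\phi_u''$. The only cosmetic difference is that you bound $B(u)\leq(\|u\|^2/S_{H,L})^{2^*_\mu}$ directly from the definition of $S_{H,L}$, whereas the paper uses the Hardy--Littlewood--Sobolev inequality together with the Sobolev embedding constant $C_{2^*}$; either way the exponents match to give $m_u(t_{\max})\gtrsim\|u\|^{1-q}$, so the threshold $\la_*$ is independent of $u$ and the normalization step you describe is not actually needed.
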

\begin{proof}
Define $A(u)= \int_{\Om}|u|^{1-q}~dx$ and $B(u)= \int_{\Om}\int_{\Om}\frac{|u(x)|^{2^*_{\mu}}|u(y)|^{2^*_{\mu}}}{|x-y|^{\mu}}~\mathrm{d}x\mathrm{d}y$. Let $u \in H_{+,q}\setminus \{0\}$ then we have
\begin{align*}
\frac{d}{dt}I(tu)
=& t\|u\|^2 -\la t^{-q} A(u) - t^{22_{\mu}^{*}-1} B(u)\\
=&t^{-q} \left (m_u(t) - \la A(u) \right ),
\end{align*}
where we define $m_u(t) := t^{1+q} \|u\|^2 - t^{22_{\mu}^{*}-1+q} B(u)$. Suppose $tu \in \mc N_\la$, then $\phi_{tu}^{\prime \prime}(1) = t^{2-q}m^{\prime}_u(t)$ and so $tu \in \mc N^+_\la$ (or $\mc N^-_\la$) provided $m^{\prime}_u(t)>0$ (or $<0$). Since $\ds \lim_{t \rightarrow \infty} m_u(t) = - \infty$,
we can easily see that $m_u(t)$ attains its maximum at $t_{max} = \left [ \frac{(1+q)\|u\|^2}{(22_{\mu}^{*}-1+q) B(u)} \right]^{\frac{1}{22_{\mu}^{*}-2}} $ and
\[ m_u(t_{max}) = \left( \frac{22_{\mu}^{*}-2}{22_{\mu}^{*}-1+q} \right) \left( \frac{1+q}{22_{\mu}^{*}-1+q} \right)^{\frac{1+q}{22_{\mu}^{*}-2}} \frac{(\|u\|^2)^\frac{22_{\mu}^{*}-1+q}{22_{\mu}^{*}-2}}{(B(u))^\frac{1+q}{22_{\mu}^{*}-2}}. \]
Clearly for $t> 0$, $tu \in \mc N_{\la}$  if and only if  $t$ is a solution of
\begin{equation}\label{tsol}
m_u(t) =  \la A(u).
\end{equation}
 So if $\la >0$ is sufficiently large, \eqref{tsol} has no solution and thus $\phi_u$ has no critical points. Hence, no multiple of $u$ lies in $\mc N_\la$. We already have $\phi_u(t) \rightarrow -\infty$ as $t \rightarrow +\infty$. On the other hand, if $\la>0$ sufficiently small, say $\la < \la_*$, then there exist exactly two solutions $t_1< t_{\max} < t_2$ of \eqref{tsol} with $m^{\prime}_u(t_1)>0$ and $m^{\prime}_u(t_2)<0$. Thus, there are exactly two multiples of $u \in \mc N_\la$ namely $t_1u \in \mc N^+_\la$ and $t_2u \in \mc N^-_\la$. It follows that $\phi_u$ has exactly two critical points- a local minimum at $t=t_1$ and a local maximum at $t=t_2$. Moreover, $\phi_u$ is decreasing in $(0,t_1)$, increasing in $(t_1,t_2)$ and decreasing in $(t_2,\infty)$. It remains to find the threshold $\la_*$ and for this, using Lemma \ref{singeqnr} we see that
\begin{equation*}
\begin{split}
&m_u(t_{max}) - \la A(u)\\
&\geq  \left( \frac{22_{\mu}^{*}-2}{22_{\mu}^{*}-1+q} \right) \left( \frac{1+q}{22_{\mu}^{*}-1+q} \right)^{\frac{1+q}{22_{\mu}^{*}-2}} \frac{(\|u\|^2)^\frac{22_{\mu}^{*}-1+q}{22_{\mu}^{*}-2}}{(B(u))^\frac{1+q}{22_{\mu}^{*}-2}} - \la C_{1-q} \|u\|^{1-q}\\
&\geq  \left( \frac{22_{\mu}^{*}-2}{22_{\mu}^{*}-1+q} \right) \left( \frac{1+q}{22_{\mu}^{*}-1+q} \right)^{\frac{1+q}{22_{\mu}^{*}-2}} \frac{K(\|u\|^2)^{\frac{1-q}{2}}}{\left(C(n,\mu)C_{2^*}^{\frac{22^*_\mu}{2^*}}\right)^{\frac{1+q}{22^*_\mu-2}}}- \la C_{1-q} \|u\|^{1-q}~ >0
\end{split}
\end{equation*}
if and only if $\la <  \left( \frac{22_{\mu}^{*}-2}{22_{\mu}^{*}-1+q} \right) \left( \frac{1+q}{22_{\mu}^{*}-1+q} \right)^{\frac{1+q}{22_{\mu}^{*}-2}} \left(C(n,\mu)C_{2^*}^{\frac{22^*_\mu}{2^*}}\right)^\frac{-1-q}{22_{\mu}^{*}-2} K^{-1}({C_{1-q}})^{-1} = \la_* $(say), where $C_{1-q}$ and $C(n,\mu)$ are defined in section 2 and $K$ is an appropriate positive constant.\QED
\end{proof}

\noi{\bf Proof of Theorem \ref{mainthrm1}}:
From \textcolor{red}{Lemma} \ref{lem3.2}, we see that $\Lambda \geq \la_* > 0$. Therefore, $\Lambda$ is positive.
\QED
\begin{Corollary}
$\mc N_{\la}^{0} = \{0\}$ for all $ \la \in (0, \La)$.
\end{Corollary}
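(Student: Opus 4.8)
The plan is to argue by contradiction, reducing membership in $\mc N_\la^0$ to a degenerate tangency condition for the auxiliary function $m_u$ introduced in Lemma \ref{lem3.2}, and then to invoke the very definition of $\La$ to exclude it. Since $\mc N_\la \subset H_{+,q}$ and $H_{+,q}$ excludes the zero function, it suffices to show that no $u \in H_{+,q}\setminus\{0\}$ can belong to $\mc N_\la^0$ when $\la \in (0,\La)$.

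First I would suppose, for contradiction, that there is $u \in \mc N_\la^0$ with $u \neq 0$, so that $\phi_u^\prime(1)=0$ and $\phi_u^{\prime\prime}(1)=0$. Recalling from the proof of Lemma \ref{lem3.2} that $\phi_u^\prime(t)=t^{-q}(m_u(t)-\la A(u))$, the condition $\phi_u^\prime(1)=0$ is equivalent to $m_u(1)=\la A(u)$, i.e. $u \in \mc N_\la$. Differentiating once more gives $\phi_u^{\prime\prime}(t)=-qt^{-q-1}(m_u(t)-\la A(u))+t^{-q}m_u^\prime(t)$; evaluating at $t=1$ and using $m_u(1)=\la A(u)$, the condition $\phi_u^{\prime\prime}(1)=0$ collapses to $m_u^\prime(1)=0$. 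Since $m_u$ is strictly increasing on $(0,t_{\max})$ and strictly decreasing on $(t_{\max},\infty)$, its derivative vanishes only at $t_{\max}$, so $m_u^\prime(1)=0$ forces $t_{\max}=1$. Combining, I obtain $\la A(u)=m_u(1)=m_u(t_{\max})$.

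Next I would observe that the equality $\la A(u)=m_u(t_{\max})$ is precisely the borderline situation in which the equation $m_u(t)=\la A(u)$ of \eqref{tsol} has a single (double) root at $t_{\max}$, so that $\phi_u$ possesses exactly one critical point on $(0,\infty)$ rather than two. On the other hand, for $\la \in (0,\La)$ the defining property of $\La$ guarantees that $\phi_u$ has two critical points in $(0,\infty)$ for every $u \in H_{+,q}\setminus\{0\}$; this clause is inherited by every smaller parameter, since decreasing $\la$ only decreases $\la A(u)$ and hence preserves the strict inequality $\la A(u)<m_u(t_{\max})$. This contradicts the conclusion $\la A(u)=m_u(t_{\max})$ of the previous step. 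Therefore no nontrivial $u$ lies in $\mc N_\la^0$, and $\mc N_\la^0=\{0\}$.

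I expect the only delicate point to be the bookkeeping that converts the two pointwise conditions $\phi_u^\prime(1)=\phi_u^{\prime\prime}(1)=0$ into the single tangency statement $\la A(u)=m_u(t_{\max})$; once this is in place, the contradiction with the two-critical-points clause in the definition of $\La$ is immediate. The underlying computation is routine, being a direct reuse of the derivative identities already established in Lemma \ref{lem3.2}.
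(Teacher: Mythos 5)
Your proof is correct and follows essentially the same route as the paper: both argue by contradiction, using the structure of $m_u$ from Lemma \ref{lem3.2} together with the two-critical-points clause in the definition of $\La$ to rule out a degenerate critical point of $\phi_u$ at $t=1$. Your version simply unpacks the paper's terse argument more explicitly (deriving the tangency condition $\la A(u)=m_u(t_{\max})$ and, usefully, noting the monotonicity in $\la$ that makes the clause valid for every $\la\in(0,\La)$, a point the paper glosses over).
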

\begin{proof}
Let $u \not\equiv 0 \in \mc N_{\la}^{0}$. Then $u \in \mc N_{\la}^{0}$ implies $u \in \mc N_{\la}$ that is, $1$ is a critical point of $\phi_{u}$. Using previous result, we say that $\phi_{u}$ has critical points corresponding to local minima or local maxima. So, $1$ is the critical point corresponding to local minima or local maxima of $\phi_{u}$. Thus, either $u \in \mc N_{\la}^{+}$ or $u \in \mc N_{\la}^{-}$ which is a contradiction.\QED
\end{proof}

\noi We can show that $\mc N_{\la}^{+}$ and $\mc N_{\la}^{-}$ are bounded in the following way:

\begin{Lemma}\label{le01}
The following holds:
\begin{enumerate}
\item[$(i)$] $\sup \{ \|u\|: u \in \mc N_{\la}^{+}\} < \infty $
\item[$(ii)$] $\inf \{ \|v\|: v \in \mc N_{\la}^{-} \} >0$  and $ \sup \{ \|v\| : v \in \mc N_{\la}^{-} , I(v) \leq M\} < \infty$ for each $M > 0$.
\end{enumerate}
Moreover, $\inf I(\mc N_{\la}^{+}) > - \infty$ and $\inf I(\mc N_{\la}^{-}) > - \infty$.
\end{Lemma}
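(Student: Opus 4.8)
The plan is to extract everything from the two constraints defining $\mc N_\la^{\pm}$: the equation $\phi_u'(1)=0$, which reads $\|u\|^2 = \la A(u) + B(u)$ with $A(u)=\int_\Om|u|^{1-q}\,\mathrm{d}x$ and $B(u)=\int_\Om\int_\Om\frac{|u(x)|^{2^*_\mu}|u(y)|^{2^*_\mu}}{|x-y|^\mu}\,\mathrm{d}x\mathrm{d}y$, together with the sign of $\phi_u''(1)=\|u\|^2+q\la A(u)-(22^*_\mu-1)B(u)$. The two analytic inputs I would use throughout are the subcritical embedding $A(u)\leq C_{1-q}\|u\|^{1-q}$ and the Hardy--Littlewood--Sobolev bound $B(u)\leq S_{H,L}^{-2^*_\mu}\|u\|^{22^*_\mu}$, the latter coming straight from the definition of $S_{H,L}$ and its independence of the domain (one extends $u\in H^1_0(\Om)$ by zero to $\mb R^n$).

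For $(i)$, I would take $u\in\mc N_\la^+$ and use $\phi_u'(1)=0$ to substitute $B(u)=\|u\|^2-\la A(u)$ into $\phi_u''(1)>0$. This eliminates $B(u)$ and, after collecting terms, yields $(22^*_\mu-2)\|u\|^2 < (22^*_\mu-1+q)\la A(u)$; note $22^*_\mu>2$ since $2^*_\mu>1$, so the inequality points the right way. Bounding $A(u)\leq C_{1-q}\|u\|^{1-q}$ then gives $\|u\|^{1+q}< \frac{(22^*_\mu-1+q)\la C_{1-q}}{22^*_\mu-2}$, a uniform bound as $1+q>0$.

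For the lower bound in $(ii)$, I would take $v\in\mc N_\la^-$ and discard the nonnegative term $q\la A(v)$ in $\phi_v''(1)<0$ to obtain $\|v\|^2<(22^*_\mu-1)B(v)$. Feeding in $B(v)\leq S_{H,L}^{-2^*_\mu}\|v\|^{22^*_\mu}$ and dividing by $\|v\|^2>0$ (legitimate as $v\not\equiv0$) gives $\|v\|^{22^*_\mu-2}>\frac{S_{H,L}^{2^*_\mu}}{22^*_\mu-1}$, hence a positive lower bound for $\|v\|$. For the remaining claims --- the upper bound on $\|v\|$ over the sublevel set $\{I(v)\leq M\}$ in $\mc N_\la^-$, and $\inf I(\mc N_\la^{\pm})>-\infty$ --- I would reuse the coercivity estimate already established, namely $I(u)\geq c_1\|u\|^2-c_2\|u\|^{1-q}$ for every $u\in\mc N_\la$. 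Since $1-q<2$, the right-hand side tends to $+\infty$ as $\|u\|\to\infty$ and attains a finite minimum on $[0,\infty)$; the former gives the sublevel boundedness, the latter the two infimum bounds simultaneously (and these apply a fortiori on $\mc N_\la^+$ and $\mc N_\la^-$).

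I do not expect a serious obstacle: the argument is algebraic manipulation of the Nehari constraints combined with two standard inequalities. The only points needing care are the sign bookkeeping (ensuring $22^*_\mu-2>0$ so the inequality in $(i)$ is oriented correctly) and the justification that the $S_{H,L}$ bound applies to $H^1_0(\Om)$ functions via zero extension, using the domain-independence of $S_{H,L}$ recorded earlier in the excerpt.
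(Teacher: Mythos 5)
Your proposal is correct, and on part $(i)$, the sublevel-set bound in $(ii)$, and the two infima it follows essentially the paper's own argument (the paper, too, substitutes the constraint $B(u)=\|u\|^2-\la A(u)$ into $\phi_u''(1)>0$ and bounds $A(u)\leq C_{1-q}\|u\|^{1-q}$; for the infima it additionally invokes Lemma 2.3 of Gao--Yang together with the bounds from $(i)$--$(ii)$, whereas you just reuse coercivity on $\mc N_\la$, which is more economical and equally valid). The genuine divergence is the bound $\inf\{\|v\|: v\in\mc N_\la^-\}>0$. The paper tries to extract it from the singular term alone: it writes $0>\phi_v''(1)=(2-22^*_\mu)\|v\|^2+\la(22^*_\mu-1+q)\int_\Om|v|^{1-q}\,\mathrm{d}x \geq (2-22^*_\mu)\|v\|^2+\la(22^*_\mu-1+q)C_{1-q}\|v\|^{1-q}$ and concludes $\|v\|\geq\bigl(\la(22^*_\mu-1+q)C_{1-q}/(22^*_\mu-2)\bigr)^{1/(1+q)}$. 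But since $C_{1-q}$ is a supremum constant, the embedding gives $\int_\Om|v|^{1-q}\,\mathrm{d}x\leq C_{1-q}\|v\|^{1-q}$, so the paper's middle inequality runs the wrong way and its chain does not yield a sign for the right-hand side; the sublinear term can only bound $\mc N_\la^+$ from above, not $\mc N_\la^-$ from below. Your route --- discard $q\la A(v)\geq 0$ in $\phi_v''(1)<0$ to get $\|v\|^2<(22^*_\mu-1)B(v)$, then bound $B(v)\leq S_{H,L}^{-2^*_\mu}\|v\|^{22^*_\mu}$ via zero extension and the definition of $S_{H,L}$ --- is the standard, sound way to obtain the positive lower bound, and it produces the constant $\bigl(S_{H,L}^{2^*_\mu}/(22^*_\mu-1)\bigr)^{1/(22^*_\mu-2)}$ rather than the one the paper asserts. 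Nothing downstream (e.g.\ the argument ruling out $v_\la\equiv 0$ in Proposition \ref{minattain2}) uses the specific constant, only its positivity, so your proof not only works but repairs a genuine slip in the paper's.
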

\begin{proof}
\begin{enumerate}
\item[$(i)$] Let $u \in \mc N_{\la}^{+}$. Then we have
\begin{equation*}
\begin{split}
0 & < \phi^{\prime \prime}_u(1) =
(2-22_{\mu}^{*}) \|u\|^2 + \la (22_{\mu}^{*}-1+q) \int_{\Om}|u|^{1-q} \mathrm{d}x   \\
 & \leq (2-22_{\mu}^{*}) \|u\|^2 + \la (22_{\mu}^{*}-1+q) C_{1-q}\|u\|^{1-q} .\\
\end{split}
\end{equation*}
 Thus we obtain
\[ \|u\| \leq \left ( \frac{\la (22_{\mu}^{*}-1+q) C_{1-q}}{22_{\mu}^{*}-2} \right )^{\frac{1}{1+q}}\]
which implies that $\sup \{ \|u\| : u \in \mc N_{\la}^{+}\} < \infty $.
\item[$(ii)$] Let $v \in \mc N_{\la}^{-}$. We have
\begin{equation*}
\begin{split}
0 & > \phi^{\prime \prime}_v(1) =
(2-22_{\mu}^{*}) \|v\|^2 + \la (22_{\mu}^{*}-1+q) \int_{\Om}|v|^{1-q} \mathrm{d}x   \\
 & \geq (2-22_{\mu}^{*}) \|v\|^2 + \la (22_{\mu}^{*}-1+q) C_{1-q}\|v\|^{1-q} .\\
\end{split}
\end{equation*}
 Thus we obtain
\[ \|v\| \geq \left ( \frac{\la (22_{\mu}^{*}-1+q) C_{1-q}}{22_{\mu}^{*}-2} \right )^{\frac{1}{1+q}}\]
which implies that $\inf \{ \|v\| : v \in \mc N_{\la}^{-} \} >0$. If $v \in \mc N_\la^-$ and $I(v) \leq M$, we get
\[  \frac{(22_{\mu}^{*}-2)}{4 \times 2_{\mu}^{*}} \|v\|^2 - \la \left( \frac{22_{\mu}^{*}-1+q}{22_{\mu}^{*}(1-q)} \right)  C_{1-q}\|v\|^{1-q} \leq M \]
 which implies  $ \sup \{ \|v\| : v \in \mc N_{\la}^{-} , Iv \leq M\} < \infty$, for each $M > 0$. Also if $u \in \mc N_{\la}^{+}$, using Lemma $2.3$ of \cite{myang} we have
\[ I(u) \geq -\frac{(1+q)}{2(1-q)}  \|u\|^2 - \frac{(22_{\mu}^{*}-1+q)}{22_{\mu}^{*}(1-q)} K C_{2^{*}} \|u\|^{2^{*}}\]
for some positive constant $K$. So, using ($i$) we conclude that $\inf I(\mc N_{\la}^{+}) > - \infty$ and similarly, using ($ii$) we can show that $\inf I(\mc N_{\la}^{-}) > - \infty$.\QED
\end{enumerate}
\end{proof}

\begin{Lemma}\label{le03} Suppose $u$ and $v$ are minimizers of $I$ over $\mc N_{\la}^{+}$ and $\mc N_{\la}^{-}$ respectively. Then for each $ w \in H_{+}$,
\begin{enumerate}
\item  there exists $\epsilon_0 > 0$ such that $I(u +\epsilon w) \geq I(u)$ for each $ \epsilon \in [0, \epsilon_0]$, and
\item $t_{\epsilon} \rightarrow 1$  as $\epsilon \rightarrow 0^+$, where $t_{\epsilon}$ is the unique positive real number satisfying $t_{\epsilon} (v + \epsilon w) \in \mc N_{\la}^{-}.$
\end{enumerate}
\end{Lemma}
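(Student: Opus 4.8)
The plan is to reduce both assertions to the continuity, as $\e\to 0^+$, of the maps $\e\mapsto\|u+\e w\|^2$, $\e\mapsto A(u+\e w)$ and $\e\mapsto B(u+\e w)$ (and the analogous ones with $v$ in place of $u$), where $A(u)=\int_{\Om}|u|^{1-q}~\mathrm{d}x$ and $B(u)=\int_{\Om}\int_{\Om}\frac{|u(x)|^{2^*_\mu}|u(y)|^{2^*_\mu}}{|x-y|^\mu}~\mathrm{d}x\mathrm{d}y$ are as in the proof of Lemma \ref{lem3.2}, together with the fibering analysis of that lemma. First I would note that since $u,v\in H_{+,q}\setminus\{0\}$ and $w\in H_+$, the functions $u+\e w$ and $v+\e w$ lie in $H_{+,q}\setminus\{0\}$ for every $\e\ge 0$, so for $\la\in(0,\La)$ Lemma \ref{lem3.2} applies to each of them and yields the associated $\mc N_\la^+$ and $\mc N_\la^-$ multiples. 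Continuity of $\e\mapsto\|u+\e w\|^2$ and of $\e\mapsto B(u+\e w)$ is immediate from $u+\e w\to u$ in $H^1_0(\Om)\hookrightarrow L^{2^*}(\Om)$ and Lemma \ref{singeqnr}. The only delicate term is $A$, whose singular exponent $1-q$ obstructs differentiability in $\e$; for mere continuity, however, I would use that for $u,w\ge 0$ and $0<1-q<1$ the pointwise bound $(u+\e w)^{1-q}\le u^{1-q}+w^{1-q}\in L^1(\Om)$ holds by subadditivity of $s\mapsto s^{1-q}$, so that dominated convergence gives $A(u+\e w)\to A(u)$ as $\e\to 0^+$ (and similarly for $v$).

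For part (1), I would apply Lemma \ref{lem3.2} to $u+\e w$ to obtain $0<t_1(\e)<t_2(\e)$ with $t_1(\e)(u+\e w)\in\mc N_\la^+$ and $t_2(\e)(u+\e w)\in\mc N_\la^-$, and with $\phi_{u+\e w}$ decreasing on $(0,t_1(\e))$ and increasing on $(t_1(\e),t_2(\e))$. Since $u\in\mc N_\la^+$, its own $\mc N_\la^+$-multiple is $u$ itself, i.e. $t_1(0)=1$, whence $t_2(0)>t_1(0)=1$ strictly. Now $t_1(\e)$ and $t_2(\e)$ are the two simple roots of $m_{u+\e w}(t)=\la A(u+\e w)$ (with $m'\neq 0$ at each by Lemma \ref{lem3.2}), so the continuity of the coefficients $\|u+\e w\|^2$, $B(u+\e w)$, $A(u+\e w)$ established above forces $t_2(\e)\to t_2(0)>1$; hence $t_2(\e)>1$ for all $\e$ in some $[0,\e_0]$. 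For such $\e$ the point $t=1$ lies in $(0,t_2(\e)]$, on which $\phi_{u+\e w}$ attains its minimum at $t_1(\e)$, so that $I(u+\e w)=\phi_{u+\e w}(1)\ge\phi_{u+\e w}(t_1(\e))=I\big(t_1(\e)(u+\e w)\big)\ge I(u)$, the final inequality holding because $t_1(\e)(u+\e w)\in\mc N_\la^+$ and $u$ minimizes $I$ over $\mc N_\la^+$. This is exactly (1).

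For part (2), I would argue by a monotonicity/intermediate-value route on $F(\e,t):=\phi^{\prime}_{v+\e w}(t)=t\|v+\e w\|^2-\la t^{-q}A(v+\e w)-t^{22^*_\mu-1}B(v+\e w)$. The continuity statements above make both $F$ and $\pa_t F(\e,t)=\phi^{\prime\prime}_{v+\e w}(t)$ jointly continuous in $(\e,t)$ for $t>0$. Since $v\in\mc N_\la^-$, one has $F(0,1)=\phi^{\prime}_v(1)=0$ and $\pa_t F(0,1)=\phi^{\prime\prime}_v(1)<0$, so $\pa_t F<0$ on a neighbourhood of $(0,1)$ and $F$ is strictly decreasing in $t$ there. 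Fixing small $\de>0$ with $F(0,1-\de)>0>F(0,1+\de)$, continuity in $\e$ gives $F(\e,1-\de)>0>F(\e,1+\de)$ for all small $\e$, hence a root $t_\e\in(1-\de,1+\de)$ with $\phi^{\prime\prime}_{v+\e w}(t_\e)<0$, i.e. $t_\e(v+\e w)\in\mc N_\la^-$. By the uniqueness of the $\mc N_\la^-$-multiple in Lemma \ref{lem3.2}, this $t_\e$ is the one in the statement, and since $\de$ was arbitrary we conclude $t_\e\to 1$ as $\e\to 0^+$.

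The main obstacle throughout is precisely the singular term: the map $\e\mapsto A(\cdot+\e w)$ is in general not differentiable, so a direct application of the implicit function theorem to produce a smooth branch $t(\e)$ is not available. I would circumvent this by using only continuity of $A$ (via the subadditivity/dominated-convergence bound above), combined with the sign of $\phi^{\prime\prime}$ at the relevant root and the uniqueness and monotonicity structure of the fibering map recorded in Lemma \ref{lem3.2}; these suffice to establish the required continuity of the critical multiples $t_1(\e),t_2(\e)$ without any differentiability in $\e$.
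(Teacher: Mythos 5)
Your proof is correct, and its overall strategy --- reducing both parts to the continuity in $\e$ of the three coefficient functionals $\|u+\e w\|^2$, $A(u+\e w)$, $B(u+\e w)$ and then exploiting the fibering structure of Lemma \ref{lem3.2} --- is the same as the paper's; the differences are in execution. For part (1), the paper tracks the single scalar function $\rho(\e)=\phi^{\prime\prime}_{u+\e w}(1)$, notes $\rho(0)>0$ and uses continuity to keep $\rho(\e)>0$ on $[0,\e_0]$, which places $t=1$ to the left of the local maximum $t_2(\e)$ (since $t\mapsto\phi^{\prime\prime}_{u+\e w}(t)$ is strictly decreasing and is negative at $t_2(\e)$); you instead establish continuity of the root $t_2(\e)$ itself and conclude $t_2(\e)>1$ for small $\e$ --- equivalent, and both arguments finish identically with $I(u+\e w)=\phi_{u+\e w}(1)\ge\phi_{u+\e w}(t_1(\e))\ge \inf I(\mc N^+_\la)=I(u)$. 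For part (2), the paper applies the smooth implicit function theorem to the auxiliary function $h(t,l_1,l_2,l_3)=l_1t-\la t^{-q}l_2-t^{22^*_\mu-1}l_3$ composed with the merely continuous coefficient maps; you replace this with an elementary intermediate-value plus monotonicity argument. That buys two things: you avoid the implicit function theorem entirely, and you make explicit the identification of the constructed root with $t_\e$ via uniqueness of the $\mc N^-_\la$-multiple --- a point the paper's argument leaves implicit (one must know that $t_\e$ lies in the neighbourhood produced by the implicit function theorem, or argue by uniqueness as you do). You are also more careful than the paper about the one genuinely delicate continuity, that of the singular term $\e\mapsto\int_\Om|u+\e w|^{1-q}\,\mathrm{d}x$, which you justify by subadditivity of $s\mapsto s^{1-q}$ together with dominated convergence, whereas the paper simply asserts the continuity of $\rho$.
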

\begin{proof}\begin{enumerate}
\item{
Let $w \in H_{+}$ that is $w \in H^1_0(\Om)$ and $w \geq 0$. We set
\begin{equation*}
\begin{split}
\rho(\epsilon) = &\|u+\epsilon w\|^2 + \la q \int_{\Om}|u+\epsilon w|^{1-q}~\mathrm{d}x\\
 & \quad - (22^{*}_{\mu}-1) \int_{\Om}\int_{\Om}\frac{|(u+\epsilon w)(x)|^{2^*_{\mu}}|(u+\epsilon w)(y)|^{2^*_{\mu}}}{|x-y|^{\mu}}~\mathrm{d}x\mathrm{d}y
 \end{split}
 \end{equation*}
for each $\epsilon \geq 0$. Then using continuity of $\rho$ and the fact that $\rho(0) = \phi^{\prime \prime}_u(1) >0$ since $u \in \mc{N}^{+}_{\la}$, there exist $\epsilon_0>0 $ such that $\rho(\epsilon)> 0$ for $\epsilon \in [0, \epsilon_0]$. Since for each $\epsilon > 0$, there exists $t_{\epsilon}^{\prime}>0$ such that $t_{\epsilon}^{\prime}(u + \epsilon w) \in \mc{N}^{+}_{\la}$, so $t_{\epsilon}^{\prime} \rightarrow 1$ as $\epsilon \rightarrow 0$ and for each $\epsilon \in [0, \epsilon_0]$, we have
\[ I(u + \epsilon w) \geq I(t_{\epsilon}^{\prime}(u + \epsilon w))\geq \inf I(\mc{N}^{+}_{\la})= I(u).\]
}
\item{
We define $h :(0, \infty)\times \mb R^3 \rightarrow \mb R  $ by
\[ h(t,l_1,l_2,l_3) = l_1t - \la t^{-q}l_2 - t^{22^*_\mu-1}l_3 \]
for $(t,l_1,l_2,l_3)\in (0, \infty)\times \mb R^3.$ Then, $h$ is a $C^{\infty}$ function. Also, we have
\begin{equation*}
\frac{dh}{dt}\left(1, \|v\|^2,\int_{\Om}|v|^{1-q}~\mathrm{d}x , \int_{\Om}\int_{\Om}\frac{|v(x)|^{2^*_{\mu}}|v(y)|^{2^*_{\mu}}}{|x-y|^{\mu}}~\mathrm{d}x\mathrm{d}y\right) = \phi^{\prime \prime}_v(1)<0  \text{ and }\;
\end{equation*}
\begin{equation*}
 h\left(t_{\epsilon}, \|v+\epsilon w\|^2, \int_{\Om}|v + \epsilon w|^{1-q}~\mathrm{d}x , \int_{\Om}\int_{\Om}\frac{|(v + \epsilon w)(x)|^{2^*_{\mu}}|(v + \epsilon w)(y)|^{2^*_{\mu}}}{|x-y|^{\mu}}~\mathrm{d}x\mathrm{d}y\right) =0,
 \end{equation*}
 for each $\epsilon > 0$. Moreover,
\[ h\left(1, \|v\|^2, \int_{\Om}|v|^{1-q}~\mathrm{d}x, \int_{\Om}\int_{\Om}\frac{|v(x)|^{2^*_{\mu}}|v(y)|^{2^*_{\mu}}}{|x-y|^{\mu}}~\mathrm{d}x\mathrm{d}y\right) = \phi^{\prime}_v(1) = 0.\]
Therefore, by implicit function theorem, there exists an open neighborhood $ A \subset (0, \infty)$ and $B \subset \mb R^3$ containing $1$ and $\left(\|v\|^2,\int_{\Om}|v|^{1-q}~\mathrm{d}x, \int_{\Om}\int_{\Om}\frac{|v(x)|^{2^*_{\mu}}|v(y)|^{2^*_{\mu}}}{|x-y|^{\mu}}~\mathrm{d}x\mathrm{d}y \right)$ respectively such that  for all $y \in B$, $h(t,y) = 0 $ has a unique solution $ t = g(y)\in A $, where $g : B \rightarrow A$ is a continuous function. So, $$\left(\|v+\epsilon w\|^2,\; \int_{\Om}|v+ \epsilon w|^{1-q}~\mathrm{d}x,\int_{\Om}\int_{\Om}\frac{|(v + \epsilon w)(x)|^{2^*_{\mu}}|(v + \epsilon w)(y)|^{2^*_{\mu}}}{|x-y|^{\mu}}~\mathrm{d}x\mathrm{d}y\right) \in B$$
\[ \text{and, }g \left(\|v+\epsilon w\|^2,\; \int_{\Om}|v+ \epsilon w|^{1-q}~\mathrm{d}x,\int_{\Om}\int_{\Om}\frac{|(v + \epsilon w)(x)|^{2^*_{\mu}}|(v + \epsilon w)(y)|^{2^*_{\mu}}}{|x-y|^{\mu}}~\mathrm{d}x\mathrm{d}y\right) = t_{\epsilon},  \]
\noi since $h\left(t_{\epsilon},\|v+\epsilon w\|^2,\; \int_{\Om}|v+ \epsilon w|^{1-q}~\mathrm{d}x,\int_{\Om}\int_{\Om}\frac{|(v + \epsilon w)(x)|^{2^*_{\mu}}|(v + \epsilon w)(y)|^{2^*_{\mu}}}{|x-y|^{\mu}}~\mathrm{d}x\mathrm{d}y\right) = 0$. Thus, by continuity of $g$, we obtain $t_{\epsilon} \rightarrow 1$ as $\epsilon \rightarrow 0^+$.}\QED
\end{enumerate}
\end{proof}
\begin{Lemma} Suppose $u$ and $v$ are minimizers of $I$ on $\mc N_{\la}^{+}$ and $\mc N_{\la}^{-}$ respectively. Then
for each $w \in H_{+}$, we have $u^{-q}w, v^{-q} w \in L^{1}(\Om)$ and
\begin{align}
&\int_{\Om} (\nabla u \nabla w-\la u^{-q}w)~\mathrm{d}x  - \int_{\Om}\int_{\Om}\frac{|u(y)|^{2^*_{\mu}}|u(x)|^{2^*_{\mu}-2}u(x)w(x)}{|x-y|^{\mu}}~\mathrm{d}y\mathrm{d}x \geq 0 , \label{upos}\\
&\int_{\Om} (\nabla v \nabla w -\la v^{-q}w)~\mathrm{d}x  - \int_{\Om}\int_{\Om}\frac{|v(y)|^{2^*_{\mu}}|u(x)|^{2^*_{\mu}-2}v(x)w(x)}{|x-y|^{\mu}}~\mathrm{d}y\mathrm{d}x \geq 0.\label{vpos}\end{align}
Particularly, $u,\; v >0$ almost everywhere in $\Om$.
\end{Lemma}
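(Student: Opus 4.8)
The plan is to read off the two variational inequalities directly from the minimization properties recorded in Lemma~\ref{le03}, by differentiating the energy along the admissible one-sided perturbations and controlling the singular term through monotone convergence. Throughout I keep the notation $A(u) = \int_\Om |u|^{1-q}\,\mathrm{d}x$ and $B(u) = \int_\Om\int_\Om \frac{|u(x)|^{2^*_\mu}|u(y)|^{2^*_\mu}}{|x-y|^\mu}\,\mathrm{d}x\mathrm{d}y$ from Lemma~\ref{lem3.2}, so that $I = \frac12\|\cdot\|^2 - \frac{\la}{1-q}A - \frac{1}{22^*_\mu}B$. The maps $u \mapsto \|u\|^2$ and, via Hardy--Littlewood--Sobolev together with Lemma~\ref{singeqnr}, $u \mapsto B(u)$ are $C^1$ on $H^1_0(\Om)$, with directional derivatives at $u$ along $w$ equal to $2\int_\Om \nabla u\nabla w\,\mathrm{d}x$ and $2\cdot 2^*_\mu \int_\Om\int_\Om \frac{|u(y)|^{2^*_\mu}|u(x)|^{2^*_\mu-2}u(x)w(x)}{|x-y|^\mu}\,\mathrm{d}y\mathrm{d}x$ respectively.

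For \eqref{upos}, fix $w \in H_+$. By Lemma~\ref{le03}(1) there is $\epsilon_0 > 0$ with $I(u+\epsilon w) \ge I(u)$ for $\epsilon \in [0,\epsilon_0]$, hence $\epsilon^{-1}\big(I(u+\epsilon w) - I(u)\big) \ge 0$ for $\epsilon \in (0,\epsilon_0]$. The quadratic and nonlocal difference quotients converge as $\epsilon \to 0^+$ to the directional derivatives above. For the singular term, since $0<1-q<1$ the map $t\mapsto t^{1-q}$ is concave and increasing on $[0,\infty)$, so for $w\ge0$ the quotients $\epsilon^{-1}\big((u+\epsilon w)^{1-q} - u^{1-q}\big)$ are nonnegative and increase monotonically to $(1-q)u^{-q}w$ as $\epsilon\downarrow 0$; by the monotone convergence theorem $\epsilon^{-1}(A(u+\epsilon w)-A(u)) \to (1-q)\int_\Om u^{-q}w$ in $[0,\infty]$. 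Passing to the limit I get $\int_\Om \nabla u\nabla w\,\mathrm{d}x - \int_\Om\int_\Om \frac{|u(y)|^{2^*_\mu}|u(x)|^{2^*_\mu-2}u(x)w(x)}{|x-y|^\mu}\,\mathrm{d}y\mathrm{d}x \ge \la\int_\Om u^{-q}w\,\mathrm{d}x$. Since the left side is finite, the right side is finite, which is exactly $u^{-q}w\in L^1(\Om)$, and the displayed inequality is \eqref{upos}.

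For \eqref{vpos} the naive perturbation $v+\epsilon w$ need not satisfy $I(v+\epsilon w)\ge I(v)$, so I route through the rescaling of Lemma~\ref{le03}(2). Since $v\in\mc N_\la^-$, the point $t=1$ is a strict local maximum of $\phi_v$, so $I(t_\epsilon v) = \phi_v(t_\epsilon) \le \phi_v(1) = I(v)$ once $t_\epsilon$ is close to $1$, which holds for small $\epsilon$ because $t_\epsilon\to1$. Combined with $I(t_\epsilon(v+\epsilon w)) \ge \inf I(\mc N_\la^-) = I(v)$ this gives $I(t_\epsilon(v+\epsilon w)) - I(t_\epsilon v) \ge 0$. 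The decisive feature is that the common factor $t_\epsilon$ scales each term homogeneously: $\|t_\epsilon(v+\epsilon w)\|^2 - \|t_\epsilon v\|^2 = t_\epsilon^2(\|v+\epsilon w\|^2-\|v\|^2)$, $A(t_\epsilon(v+\epsilon w)) - A(t_\epsilon v) = t_\epsilon^{1-q}(A(v+\epsilon w)-A(v))$, and $B(t_\epsilon(v+\epsilon w)) - B(t_\epsilon v) = t_\epsilon^{22^*_\mu}(B(v+\epsilon w)-B(v))$. Dividing by $\epsilon$ and using $t_\epsilon\to1$ together with the convergence of the quadratic and nonlocal quotients and the monotone convergence for the singular quotient exactly as before, the limit yields \eqref{vpos} and, by the same finiteness argument, $v^{-q}w\in L^1(\Om)$. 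This route needs only $t_\epsilon\to1$, not differentiability of $\epsilon\mapsto t_\epsilon$.

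Finally, positivity almost everywhere follows from the integrability just established. If $|\{u=0\}|>0$, I pick, using a Lebesgue density point of $\{u=0\}$ and a bump function, some $w\in H_+$ with $w>0$ on a subset of $\{u=0\}$ of positive measure; then $u^{-q}w \equiv +\infty$ there, contradicting $u^{-q}w\in L^1(\Om)$. Hence $u>0$ a.e., and $v>0$ a.e. by the same argument. I expect the main obstacle to be the singular term: everything hinges on observing that the one-sided difference quotients of $A$ are monotone by concavity of $t\mapsto t^{1-q}$, so that monotone convergence applies even though $\int_\Om u^{-q}w$ may be $+\infty$ a priori, and that finiteness of the regular part of the limit is then what \emph{forces} $u^{-q}w\in L^1(\Om)$ rather than it being assumed.
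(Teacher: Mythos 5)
Your proposal is correct and takes essentially the same approach as the paper's proof: you obtain \eqref{upos} from the one-sided minimality $I(u+\epsilon w)\ge I(u)$ of Lemma \ref{le03}(1) via difference quotients with the monotone convergence theorem handling the singular term, and \eqref{vpos} from the sandwich $I(t_\epsilon(v+\epsilon w))\ge I(v)\ge I(t_\epsilon v)$ together with $t_\epsilon \to 1$ from Lemma \ref{le03}(2), which is exactly the paper's argument. Your explicit density-point argument for $u,v>0$ a.e.\ merely spells out what the paper leaves implicit once $u^{-q}w,\,v^{-q}w\in L^1(\Om)$ is established.
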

\begin{proof}
Let $w \in H_{+}$. For sufficiently small $\epsilon > 0$, by Lemma \ref{le03} we get
\begin{equation}\label{solpos}
\begin{split}
0  \leq \frac{I(u+\epsilon w) - I(u)}{\epsilon}
  & =\frac{1}{2\e} (\|u+\epsilon w\|^2- \|u\|^2) - \frac{\la}{\e(1-q)} \int_{\Om}( |u + \epsilon w|^{1-q} - |u|^{1-q})~\mathrm{d}x \\
& - \frac{1}{ 22^*_\mu\e}\int_{\Om}\int_{\Om}\frac{|(u+\e w)(x)|^{2^*_{\mu}}|(u+\e w)(y)|^{2^*_{\mu}}- |u(x)|^{2^*_{\mu}}|u(y)|^{2^*_{\mu}} }{|x-y|^{\mu}}\mathrm{d}y\mathrm{d}x .\\
\end{split}
\end{equation}
We can easily verify that
\begin{enumerate}
\item[($i$)] $ \frac{1}{2\e} (\|u+\epsilon w\|^2- \|u\|^2) \rightarrow \int_{\Om} \nabla u \nabla w ~\mathrm{d}x \;\; \text{as}\; \epsilon \rightarrow 0^+$,
\item[($ii$)]  As $\epsilon \rightarrow 0^+$,
 \begin{align*} \frac{1}{ 22^*_\mu\e}&\int_{\Om}\int_{\Om}\frac{|(u+\e w)(x)|^{2^*_{\mu}}|(u+\e w)(y)|^{2^*_{\mu}}- |u(x)|^{2^*_{\mu}}|u(y)|^{2^*_{\mu}} }{|x-y|^{\mu}}\mathrm{d}y\mathrm{d}x \rightarrow \\
    & \int_\Om \int_\Om \frac{|u(y)|^{2^*_\mu}|u(x)|^{2^*_\mu-1}w(x)}{|x-y|^\mu}\mathrm{d}y\mathrm{d}x.
    \end{align*}
\end{enumerate}
Also we can see that for each $x \in \Om$, $\frac{|(u+\e w)(x)|^{1-q}- |u(x)|^{1-q}}{\e(1-q)}$ increases monotonically as $\e \rightarrow 0^+$ and
$$\lim\limits_{\epsilon \downarrow 0} \frac{|(u+\e w)(x)|^{1-q}- |u(x)|^{1-q}}{\e(1-q)} =
\left\{
    \begin{array}{ll}
    0 & \mbox{if} \; w(x)=0 \\
    (u(x))^{-q} w(x) & \mbox{if} \; w(x)>0 , u(x) > 0\\
    \infty & \mbox{if}\; w(x) > 0 , u(x) =0.
    \end{array}
\right.$$
So using monotone convergence theorem, we obtain $u^{-q}w \in L^1(\Om)$. Letting $\epsilon \downarrow 0$ in both sides of \eqref{solpos}, we obtain \eqref{upos}.
Next, we will show these properties for $v$. For each $\epsilon >0 $, there exists $t_{\epsilon}>0$ such that $t_{\epsilon}(v+\epsilon w) \in \mc N^-_\la$. By \textcolor{red}{Lemma} \ref{le03}(2), for sufficiently small $\epsilon > 0$, there holds
\[ I(t_{\epsilon}(v+\epsilon w)) \geq I(v) \geq I(t_{\epsilon}v)\]
which implies $I(t_{\epsilon}(v+\epsilon w)) - I(v) \geq 0$ and thus, we have
\begin{align*}
\frac{\la}{(1-q)\e} & \int_{\Om} |v+\epsilon w|^{1-q} - |v|^{1-q}\mathrm{d}x \leq \frac{t_{\epsilon}^{1+q}}{2\e} (\|v+\epsilon w\|^2 - \|v\|^2)\\
 &- \frac{t^{2^*_\mu}_\epsilon-1+q}{ 22^*_\mu \e}\int_{\Om}\int_{\Om}\frac{|(v+\e w)(x)|^{2^*_{\mu}}|(v+\e w)(y)|^{2^*_{\mu}}- |v(x)|^{2^*_{\mu}}|v(y)|^{2^*_{\mu}} }{|x-y|^{\mu}}\mathrm{d}y\mathrm{d}x.
 \end{align*}
As $\epsilon \downarrow 0$, $t_\epsilon \rightarrow 1$. Thus, using similar arguments as above, we obtain $v^{-q}w \in L^1(\Om)$ and \eqref{vpos} follows. \QED
\end{proof}

\section{Existence of minimizer on $\mc N_{\la}^{+}$ }
In this section, we will show that the minimum of $I$ is achieved in $\mc N_{\la}^{+}.$ Moreover, we show that this minimizer is also the first solution of $(P_\la).$
\begin{Proposition}\label{minattain1}
For all $\la\in (0,\La)$, there exist $u_\la \in \mc N_{\la}^{+}$ satisfying $I(u_\la) = \inf I(\mc N_{\la}^{+})$.
\end{Proposition}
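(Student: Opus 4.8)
The plan is to realize $u_\la$ as the weak limit of a minimizing sequence and to use the fibering structure together with the calibration of $\La$ to show this limit attains the infimum inside $\mc N_\la^+$. Write $m_\la^+ := \inf I(\mc N_\la^+)$ and recall $A(u)=\int_\Om |u|^{1-q}\,dx$, $B(u)=\int_\Om\int_\Om \frac{|u(x)|^{2^*_\mu}|u(y)|^{2^*_\mu}}{|x-y|^\mu}\,dx\,dy$ as in the proof of Lemma \ref{lem3.2}. First I record $m_\la^+<0$: for fixed $w\in H_{+,q}\setminus\{0\}$ the fiber map $\phi_w$ satisfies $\phi_w(0^+)=0$ (the $t^{1-q}$ term dominates near $0$) and is strictly decreasing on $(0,t_1)$, so $I(t_1 w)=\phi_w(t_1)<0$ with $t_1w\in\mc N_\la^+$. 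Now take a minimizing sequence $\{u_k\}\subset\mc N_\la^+$, $u_k\ge 0$, $I(u_k)\to m_\la^+$. By Lemma \ref{le01}$(i)$ it is bounded, so up to a subsequence $u_k\rightharpoonup u_\la$ in $H^1_0(\Om)$, $u_k\to u_\la$ in $L^r(\Om)$ for $1\le r<2^*$ and a.e., with $u_\la\ge 0$; in particular $A(u_k)\to A(u_\la)$.

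Next I show $u_\la\not\equiv 0$. If $u_\la\equiv 0$ then $A(u_k)\to 0$; since $u_k\in\mc N_\la^+$ gives $\phi''_{u_k}(1)=(2-22^*_\mu)\|u_k\|^2+\la(22^*_\mu-1+q)A(u_k)>0$, we get $(22^*_\mu-2)\|u_k\|^2<\la(22^*_\mu-1+q)A(u_k)\to 0$, so $\|u_k\|\to 0$, and by the definition of $S_{H,L}$ (which yields $B(u_k)\le(\|u_k\|^2/S_{H,L})^{2^*_\mu}$) also $B(u_k)\to 0$. Then $I(u_k)\to 0$, contradicting $m_\la^+<0$. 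Hence $u_\la\in H_{+,q}\setminus\{0\}$.

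The core step is the passage to the limit of the energy. Putting $v_k:=u_k-u_\la\rightharpoonup 0$, I use the Brezis--Lieb splitting $\|u_k\|^2=\|u_\la\|^2+\|v_k\|^2+o(1)$ together with the nonlocal Brezis--Lieb lemma for the Choquard term (see \cite{myang}), $B(u_k)=B(u_\la)+B(v_k)+o(1)$, and $A(u_k)\to A(u_\la)$, to obtain
\[ m_\la^+ = I(u_\la) + \lim_{k}\Big(\tfrac12\|v_k\|^2 - \tfrac1{22^*_\mu}B(v_k)\Big). \]
Since $\la<\La$, every $u\in\mc N_\la^+$ obeys $\|u\|^2\le (2^*_\mu S_{H,L}^{2^*_\mu})^{1/(2^*_\mu-1)}=:T$, so $\limsup_k\|v_k\|^2\le T$; combining $B(v_k)\le(\|v_k\|^2/S_{H,L})^{2^*_\mu}$ with the elementary fact that $\tfrac12 s-\tfrac1{22^*_\mu}(s/S_{H,L})^{2^*_\mu}\ge 0$ for $0\le s\le T$, the residual is nonnegative. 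This gives $I(u_\la)\le m_\la^+$.

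It remains to prove $u_\la\in\mc N_\la^+$ with $I(u_\la)=m_\la^+$. As $u_\la\in H_{+,q}\setminus\{0\}$, Lemma \ref{lem3.2} furnishes a unique $t_1>0$ with $t_1u_\la\in\mc N_\la^+$, and $t_1$ minimizes $\phi_{u_\la}$ over the first branch $(0,t_2]$, so $\phi_{u_\la}(t_1)=I(t_1u_\la)\ge m_\la^+$. Provided $1\le t_2$, one then has $\phi_{u_\la}(t_1)\le\phi_{u_\la}(1)=I(u_\la)\le m_\la^+\le\phi_{u_\la}(t_1)$, forcing equality throughout, hence $t_1=1$ (uniqueness of the minimizer on the first branch), $u_\la\in\mc N_\la^+$ and $I(u_\la)=m_\la^+$; strong convergence $u_k\to u_\la$ then follows because the residual term must vanish. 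The delicate point, and the main obstacle, is exactly establishing $1\le t_2(u_\la)$, i.e. ruling out the escape of mass into a Choquard bubble that would let the weak limit overshoot the local maximum. I expect to settle this by exploiting the information inherited from the sequence: passing to the limit in $\phi''_{u_k}(1)>0$ keeps only $\|\cdot\|^2$ and the weakly continuous functional $A$, yielding $(2-22^*_\mu)\|u_\la\|^2+\la(22^*_\mu-1+q)A(u_\la)\ge 0$; together with the sign analysis of $\phi'_{u_\la}(1)$ the cases $\phi'_{u_\la}(1)>0$ and $1<t_1$ are immediately discarded via the energy comparison $\phi_{u_\la}(t_1)\ge m_\la^+\ge\phi_{u_\la}(1)$, and the remaining concentration scenario is excluded using the threshold bound on $\|u_\la\|^2$ built into $\La$.
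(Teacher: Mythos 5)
Your argument is sound, and in places cleaner than the paper's, up to the last step: the negativity of $m_\la^+$, the nontriviality of the weak limit (your route via $\phi''_{u_k}(1)>0$ together with $A(u_k)\to A(u_\la)$ is simpler than the paper's, which runs through the $S_{H,L}$ bound), and the inequality $I(u_\la)\le m_\la^+$ obtained from the Brezis--Lieb splitting plus the elementary fact that $\tfrac12 s-\tfrac{1}{22^*_\mu}(s/S_{H,L})^{2^*_\mu}\ge 0$ for $0\le s\le T:=(2^*_\mu S_{H,L}^{2^*_\mu})^{1/(2^*_\mu-1)}$ are all correct; this last point cleanly repackages what the paper only extracts inside its case analysis (its case (ii)). But the proof is not complete: the exclusion of the scenario $t_2<1$ --- which is exactly the paper's case (i), i.e.\ the genuinely delicate concentration case --- is only announced (``I expect to settle this by\dots''), and the mechanism you point to for it does not work as stated. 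The threshold built into $\La$ bounds $\|u\|^2$ for $u\in\mc N_\la^+$; it gives $\|t_1u_\la\|^2\le T$ and, by weak lower semicontinuity, $\|u_\la\|^2\le T$, but knowing $\|u_\la\|^2\le T$ does not by itself forbid the local maximum $t_2$ of $\phi_{u_\la}$ from sitting below $1$. So, as written, there is a genuine gap precisely where all the difficulty of the proposition is concentrated. For comparison, the paper closes this case with the auxiliary function $h(t)=\phi_{u_\la}(t)+\frac{c^2t^2}{2}-\frac{d^{22^*_\mu}t^{22^*_\mu}}{22^*_\mu}$: the limit of the Nehari identity gives $h'(1)=0$, while $t_2<1$ forces $\phi'_{u_\la}(1)<0$, hence $c^2>d^{22^*_\mu}$ and $h'(t_2)>0$, so $h$ increases on $[t_2,1]$ and one gets the contradiction chain $m_\la^+\ge h(1)>h(t_2)>\phi_{u_\la}(t_1)\ge m_\la^+$.

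The irony is that the ingredient you do derive correctly --- passing to the limit in $\phi''_{u_k}(1)>0$ to obtain $(2-22^*_\mu)\|u_\la\|^2+\la(22^*_\mu-1+q)A(u_\la)\ge 0$ --- already closes the gap, by an argument you never make. At any critical point $t$ of $\phi_{u_\la}$, eliminating $B(u_\la)$ by means of $\phi'_{u_\la}(t)=0$ yields the identity
\begin{equation*}
\phi''_{u_\la}(t)=(2-22^*_\mu)\|u_\la\|^2+\la(22^*_\mu-1+q)\,t^{-1-q}A(u_\la).
\end{equation*}
Since $t_2u_\la\in\mc N_\la^-$ means $\phi''_{u_\la}(t_2)<0$, this gives $\la(22^*_\mu-1+q)\,t_2^{-1-q}A(u_\la)<(22^*_\mu-2)\|u_\la\|^2$, while your limit inequality says $(22^*_\mu-2)\|u_\la\|^2\le\la(22^*_\mu-1+q)A(u_\la)$; combining the two and using $A(u_\la)>0$ (as $u_\la\not\equiv 0$), we get $t_2^{-1-q}<1$, i.e.\ $t_2>1$. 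With $t_2>1$ in hand, your energy comparison forces $t_1=1$, so $u_\la\in\mc N_\la^+$ and $I(u_\la)=m_\la^+$. In short: the proposal is repairable with one identity, but as submitted the decisive case is unproven, and the plan you state for it (threshold bound plus sign analysis of $\phi'_{u_\la}(1)$) would not have produced a proof.
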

\begin{proof}
Assume $\la \in (0, \Lambda)$. Let $\{u_{k}\} \subset \mc N_{\la}^{+}$ be a sequence such that $I(u_{k}) \rightarrow \inf I(\mc N_{\la}^{+})$ as $k \rightarrow \infty$. Using \textcolor{red}{Lemma} \ref{le01}, we can assume that there exist $u_\la$ such that $u_{k} \rightharpoonup u_\la$ weakly as $k \rightarrow \infty$ in $H^1_0(\Om)$. First we will show that $\inf I(\mc N_{\la}^{+}) < 0$. Let $u_0 \in \mc N_{\la}^{+}$, then we have $\phi^{\prime \prime}_{u_0}(1) >0$ which gives
\[  (1+q)\|u_0\|^2 > (22^{*}_{\mu}-1+q) \int_{\Om}\int_{\Om}\frac{|u(x)|^{2^*_{\mu}}|u(y)|^{2^*_{\mu}}}{|x-y|^{\mu}}~\mathrm{d}y\mathrm{d}x.\]
Therefore, using $2^{*}_{\mu}-1>0$ we obtain
\begin{equation*}
\begin{split}
I(u_0) & = \left( \frac12 - \frac{1}{1-q} \right) \|u_0\|^2 + \left( \frac{1}{1-q} - \frac{1}{22^{*}_{\mu}} \right)\int_{\Om} |u_0|^{2^{*}_{s}}dx\\
& \leq -  \frac{(1+q)}{2(1-q)} \|u_0\|^2 + \frac{(1+q)}{22^{*}_{\mu}(1-q)} \|u_0\|^2= \left( \frac{1}{22^*_\mu}-\frac{1}{2}\right)\left(\frac{1+q}{1-q}\right) \|u_0\|^2<0.\\
\end{split}
\end{equation*}\\
This shows that $\inf I(\mc N_{\la}^{+}) < 0$.  We set $w_k := (u_k - u_\la)$ and claim that $u_k \rightarrow u_\la$ strongly as $k \rightarrow \infty$ in $H^1_0(\Om)$. Suppose $\|w_k\|^2 \rightarrow c^2 \neq 0$ and $\int_{\Om}\int_{\Om}\frac{|w_k(y)|^{2^*_{\mu}}|w_k(x)|^{2^*_{\mu}}}{|x-y|^{\mu}}~\mathrm{d}y\mathrm{d}x \rightarrow d^{22^*_\mu}$ as $k \rightarrow \infty$. Using Brezis-Lieb lemma and Lemma \textcolor{red}{2.2} of \cite{myang}, we have
\[\|u_k\|^2 = \|w_k\|^2 + \|u_\la\|^2 + o_k(1), \; \text{and}\]
\begin{equation*}
\begin{split}
\int_{\Om} \int_{\Om} \frac{|u_k(x)|^{2^*_{\mu}}|u_k(y)|^{2^*_{\mu}}}{|x-y|^{\mu}}~\mathrm{d}x\mathrm{d}y
& = \int_{\Om} \int_{\Om} \frac{|w_k(x)|^{2^*_{\mu}}|w_k(y)|^{2^*_{\mu}}}{|x-y|^{\mu}}~\mathrm{d}x\mathrm{d}y\\
 &+\int_{\Om} \int_{\Om} \frac{|u_\la(x)|^{2^*_{\mu}}|u_\la(y)|^{2^*_{\mu}}}{|x-y|^{\mu}}~\mathrm{d}x\mathrm{d}y +o_k(1).
\end{split}
\end{equation*}
Since $u_k \in \mc N^+_{\la}$, we obtain
\begin{equation}\label{eq_8}
0  = \lim_{k \rightarrow \infty} \phi^{\prime}_{u_k}(1) = \phi^{\prime}_{u_\la}(1)+c^2 -d^{22^*_\mu}
\end{equation}
which implies
$$ \|u_\la\|^2+c^2 = \la \int_{\Om}|u_\la|^{1-q}\mathrm{d}x + \int_{\Om}\int_{\Om}\frac{|u_\la(x)|^{2^*_{\mu}}|u_\la(y)|^{2^*_{\mu}}}{|x-y|^{\mu}}~\mathrm{d}y\mathrm{d}x   +d^{22^*_\mu}.$$
We claim that $u_\la \in H_{+,q}$. Suppose $u_\la\equiv 0$. If $c=0$, then $0 > \inf I(\mc N^+_{\la}) = I(0)=0$, which is a contradiction and if $c\neq 0$, then
\begin{equation}\label{eq4.2new}
\inf I(\mc N_{\la}^{+})= I(0)+\frac{c^2}{2} - \frac{d^{22^*_\mu}}{2^*_\mu} = \frac{c^2}{2} - \frac{d^{22^*_\mu}}{2^*_\mu} .\end{equation}
But from the definition of $S_{H,L}$, we have $c^2 \geq S_{H,L}d^2 $. Also from \eqref{eq_8}, we have $c^2=d^{22^*_\mu}$. Then \eqref{eq4.2new} implies
\[ 0 > \inf I(\mc N_{\la}^{+}) = \left(\frac{1}{2}-\frac{1}{22^*_\mu}\right)c^2 \geq \left(\frac{n-\mu+2}{2(2n-\mu)}\right) S_{H,L}^{\frac{2n-\mu}{n-\mu+2}},\]
which is again a contradiction. Thus, $u_\la \in H_{+,q}$. So, there exists $0 < t_1 < t_2$ such that $\phi^{\prime}_{u_{\la}}(t_1)= \phi^{\prime}_{u_{\la}}(t_2) = 0$ and $t_{1}u_{\la} \in \mc N^{+}_{\la}$. Then, three cases arise: \\
(i) $t_2 < 1$,\\
(ii) $t_2 \geq 1$ and $\frac{c^2}{2}- \frac{d^{22^*_\mu}}{22^*_\mu} < 0$, and \\
(iii) $t_2 \geq 1$ and $\frac{c^2}{2}- \frac{d^{22^*_\mu}}{22^*_\mu} \geq 0$.\\
\noi Case (i) Let $h(t) = \phi_{u_{\la}}(t)+ \frac{c^2t^2}{2} - \frac{d^{22^*_\mu}t^{22^*_\mu}}{22^*_\mu}$, for $t >0$. By \eqref{eq_8}, we obtain $ h^{\prime}(1) = \phi^{\prime}_{u_{\la}}(1)+c^2-d^{22^*_\mu} = 0$ and
\begin{equation*}
 h^{\prime}(t_2) = \phi^{\prime}_{u_{\la}}(t_2)+t_2c^2-{t_2}^{22^*_\mu-1}d^{22^*_\mu} = {t_2}(c^2 - {t_2}^{22^*_\mu-2}d^{22^*_\mu}) \geq {t_2}(c^2 - d^{\textcolor{red}{22^*_\mu}})
 > 0
\end{equation*}
which implies that $h$ increases on $[t_2,1]$. Then we get
\begin{equation*}
\begin{split}
\inf I( \mc N_{\la}^{+}) &= \lim I(u_k) \geq \phi_{u_{\la}}(1) + \frac{c^2}{2}- \frac{d^{22^*_\mu}}{22^*_\mu}  = h(1) > h(t_2)\\
& =\phi_{u_{\la}}(t_2) + \frac{c^2t_{2}^{2}}{2}- \frac{d^{22^*_\mu}t_{2}^{22^*_\mu}}{\textcolor{red}{22^*_\mu}} \geq \phi_{u_{\la}}(t_2) + \frac{t_{2}^{2}}{2} (c^2 - d^{22^*_\mu})\\
& > \phi_{u_{\la}}(t_2) > \phi_{u_{\la}}(t_1) \geq \inf I( \mc N_{\la}^{+}),
\end{split}
\end{equation*}
which is a contradiction.\\
\noi Case (ii) In this case, since $\la \in (0, \Lambda)$, $(c^2/2 - d^{22^*_\mu}/{(22^*_\mu)}) < 0$ and $S_{H,L}d^2 \leq c^2$, we have
\[ \sup\{ \|u\|^2: u \in \mc N^+_{\la}\} \leq (2^*_\mu S_{H,L}^{2^*_\mu})^{\frac{1}{2^*_\mu-1}} < c^2 \leq \sup\{ \|u\|^2: u \in \mc N^+_{\la}\},\]
which gives a contradiction. Consequently, only case (iii) holds and  we have
$$ \inf I(\mc N_{\la}^{+}) = I(u_\la)+\frac{c^2}{2}-\frac{d^{22^*_\mu}}{22^*_\mu} \geq I(u_\la) = \phi_{u_\la}(1) \geq \phi_{u_\la}(t_1) \geq \inf I(\mc N^+_{\la}) .$$
Clearly, this holds only when $t_1 = 1$ and $(c^2/2 - d^{22^*_\mu}/{22^*_\mu}) = 0$ which yields $c=0$ and  $u_k \rightarrow u_\la$ strongly as $k \rightarrow \infty$ in $H^1_0(\Om)$. Thus, $u_\la \in \mc N^+_{\la}$ and $I(u_\la) = \inf I(\mc N^+_\la)$. \QED
\end{proof}
\begin{Proposition}\label{prp4.2}
$u_\la$  is a positive weak solution of $(P_\la)$.
\end{Proposition}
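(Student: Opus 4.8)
The plan is to upgrade the one-sided variational inequality \eqref{upos} into the full weak formulation. From Proposition \ref{minattain1} we already have a minimizer $u_\la \in \mc N_{\la}^{+}$, and the preceding lemma supplies both $u_\la > 0$ a.e. in $\Om$ and, for every $w \in H_+$,
\begin{equation*}
\int_{\Om} (\nabla u_\la \nabla w - \la u_\la^{-q} w)\,\mathrm{d}x - \int_{\Om}\int_{\Om} \frac{|u_\la(y)|^{2^*_{\mu}}|u_\la(x)|^{2^*_{\mu}-2}u_\la(x)w(x)}{|x-y|^{\mu}}\,\mathrm{d}y\mathrm{d}x \geq 0 .
\end{equation*}
Since $u_\la \in \mc N_\la$, testing with $w = u_\la$ gives the identity $\phi^{\prime}_{u_\la}(1)=0$. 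To pass from this one-sided inequality against nonnegative $w$ to an equality against test functions of arbitrary sign, I would fix $\psi \in C^\infty_c(\Om)$, take $\epsilon>0$, and insert the admissible function $w_\epsilon := (u_\la + \epsilon\psi)^+ \in H_+$ into \eqref{upos}.

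The computation I would carry out is a decomposition. Writing $w_\epsilon = u_\la + \epsilon\psi + \varphi_\epsilon$, where $\varphi_\epsilon := (u_\la + \epsilon\psi)^- \geq 0$ is supported on the bad set $\Om_\epsilon := \{x\in\Om : u_\la(x)+\epsilon\psi(x) < 0\}$, and using $\phi^{\prime}_{u_\la}(1)=0$ to cancel the contribution of $u_\la$, I would arrive at $0 \leq \epsilon B(\psi) + R_\epsilon$, where
\begin{equation*}
B(\psi) = \int_{\Om} (\nabla u_\la \nabla \psi - \la u_\la^{-q}\psi)\,\mathrm{d}x - \int_{\Om}\int_{\Om} \frac{|u_\la(y)|^{2^*_{\mu}}|u_\la(x)|^{2^*_{\mu}-2}u_\la(x)\psi(x)}{|x-y|^{\mu}}\,\mathrm{d}y\mathrm{d}x
\end{equation*}
is the target quantity and $R_\epsilon$ collects the three integrals carrying the factor $\varphi_\epsilon$ over $\Om_\epsilon$. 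Note that $B(\psi)$ is finite: the nonlocal term is controlled by the Hardy--Littlewood--Sobolev inequality since $u_\la \in L^{2^*}(\Om)$, while $u_\la^{-q}\psi \in L^1(\Om)$ follows from the preceding lemma, which gives $u_\la^{-q}w\in L^1(\Om)$ for any $w\in H_+$ dominating $|\psi|$.

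The main obstacle is to control $R_\epsilon$, and the key is a sign analysis. Two of its three terms are harmless: as $\varphi_\epsilon \geq 0$ and $u_\la > 0$, the singular term $-\la\int_{\Om_\epsilon} u_\la^{-q}\varphi_\epsilon$ and the nonlocal term are both $\leq 0$. For the gradient term, on $\Om_\epsilon$ one has $\varphi_\epsilon = -(u_\la+\epsilon\psi)$, so $\int_{\Om_\epsilon}\nabla u_\la \nabla\varphi_\epsilon = -\int_{\Om_\epsilon}|\nabla u_\la|^2 - \epsilon\int_{\Om_\epsilon}\nabla u_\la \nabla\psi$; the first summand is $\leq 0$ and the cross term is $o(\epsilon)$ by Cauchy--Schwarz together with $\|\nabla u_\la\|_{L^2(\Om_\epsilon)}\to 0$. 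This last fact is the crucial point and I would justify it from $|\Om_\epsilon|\to 0$ as $\epsilon\to 0^+$ (since $u_\la>0$ a.e. forces $\Om_\epsilon \subset \{0 < u_\la < \epsilon\|\psi\|_\infty\}$ up to a null set) combined with absolute continuity of the Lebesgue integral. Hence $R_\epsilon \leq o(\epsilon)$, so dividing $0\leq \epsilon B(\psi)+R_\epsilon$ by $\epsilon$ and letting $\epsilon\to 0^+$ yields $B(\psi)\geq 0$. Replacing $\psi$ by $-\psi$ gives $B(\psi)\leq 0$, whence $B(\psi)=0$ for all $\psi\in C^\infty_c(\Om)$. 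Together with $u_\la>0$ a.e. and $u_\la\in H^1_0(\Om)$, this is exactly the statement that $u_\la$ is a positive weak solution of $(P_\la)$.
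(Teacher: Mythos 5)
Your proof is correct, but it takes a genuinely different route from the paper's. The paper's proof fixes $\psi \in C^\infty_c(\Om)$ and asserts (citing Lemma \ref{L-infty} and the positivity of $u_\la$) a uniform lower bound $u_\la \geq \alpha > 0$ on the support of $\psi$; then $u_\la + \e\psi \geq 0$ for small $\e$, so by the reasoning of Lemma \ref{le03}(1) the minimality of $u_\la$ on $\mc N_\la^+$ gives $I(u_\la+\e\psi) \geq I(u_\la)$, and differentiating this along $\e \to 0^+$ yields $B(\psi)\geq 0$ directly for sign-changing $\psi$, with arbitrariness of $\pm\psi$ finishing the argument. You never differentiate $I$ along a sign-changing direction: instead you linearize the already-established one-sided inequality \eqref{upos} at the truncated test function $(u_\la+\e\psi)^+$, cancel via the Nehari constraint $\phi^{\prime}_{u_\la}(1)=0$, and kill the bad-set remainder by the sign of the singular and nonlocal terms together with $\|\nabla u_\la\|_{L^2(\Om_\e)} \to 0$, which follows correctly from $|\Om_\e|\to 0$ and absolute continuity of the integral. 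What each approach buys: the paper's is shorter once the lower bound is granted, but that bound does not follow from a.e.~positivity alone --- strictly it needs a maximum-principle or superharmonicity argument (note \eqref{upos} gives $-\De u_\la \geq 0$ in the distributional sense), and the citation of Lemma \ref{L-infty} is a forward reference to a result about weak solutions, which is what is being proved. Your truncation argument is self-contained at this stage of the paper: it uses only \eqref{upos}, $u_\la \in \mc N_\la$, and a.e.~positivity, exactly what the preceding lemma and Proposition \ref{minattain1} supply, at the modest cost of the measure-theoretic bookkeeping on $\Om_\e$. This is the standard device in the singular-equation literature (it is essentially the argument in Hirano--Saccon--Shioji \cite{hirano1}), and here it is arguably the more rigorous of the two.
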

\begin{proof}
Let $\psi \in C^{\infty}_c(\Om)$. By Lemma \ref{L-infty} and Lemma \ref{solpos}, since $u_\la > 0$, we can find $ \alpha >0$ such that $u_\la \geq \alpha$ on support of $\psi$. Then $u+\e \psi\ge0$ for small $\e$.  With similar reasoning as  in the proof of Lemma \ref{le03}, we can show that $ I(u_\la+\epsilon \psi) \geq I(u_\la)$ for sufficiently small $\epsilon >0$. Then we have
\begin{equation*}
\begin{split}
0 & \leq \lim \limits_{\epsilon \rightarrow 0} \frac{I(u_\la+\epsilon\psi) - I(u_\la)}{\epsilon}\\
&= \int_\Om \nabla u \nabla \psi ~\mathrm{d}x - \la \int_\Om u_{\la}^{-q}\psi ~\mathrm{d}x - \int_{\Om}\int_{\Om}\frac{|u(y)|^{2^*_{\mu}}|u(x)|^{2^*_{\mu}-2}u(x)\varphi(x)}{|x-y|^{\mu}}~\mathrm{d}y\mathrm{d}x.
\end{split}
\end{equation*}
Since $\psi \in C^{\infty}_c(\Om)$ is arbitrary, we conclude that $u_\la$ is a positive weak solution of $(P_\la)$.\QED
\end{proof}
\section{Existence of minimizer on $\mc N^{-}_{\la}$}

\noi In this section, we will show the existence of second solution by proving the existence of minimizer of $I$ on $\mc N^-_{\la}$. We need some lemmas to prove this and for instance, we assume $0 \in \Om$ and $B_\delta \subset \Om \subset B_{2\delta}$. We recall the definition of $U_\e$ from section 2. Let $\eta \in C_c^\infty (\Om)$ such that for all $x \in \mb R^n$, $0 \leq \eta(x) \leq 1$ and
$$\eta(x)=
\left\{
    \begin{array}{ll}
    1 & \mbox{if} \; x \in B_\delta \\
    0 & \mbox{if} \; x \in \mb R^n \setminus \Om.
    \end{array}
\right.$$
We define, for $\e >0$
\[\Phi_\e(x) := \eta(x)U_\e(x).\]
Moreover, since $u_\la$ is positive and bounded (see Lemma \ref{L-infty}), we can find $m,M>0$ such that for each $x \in \Om$, $m \leq u_\la(x)\leq M$.
\begin{Lemma}
 For each sufficiently small $\epsilon >0$,
 $$\sup \{I(u_\la + t\Phi_\epsilon): t\geq 0\} < I(u_\la) + \frac{n-\mu+2}{2(2n-\mu)} S_{H,L}^{\frac{2n-\mu}{n+2-\mu}}$$
\end{Lemma}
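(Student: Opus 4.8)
The plan is to set $g(t) = I(u_\la + t\Phi_\e)$ for $t \ge 0$ and to control $\sup_{t\ge 0} g(t)$. Since $g(0) = I(u_\la)$ and the Choquard term carries a strictly negative coefficient in front of $t^{22^*_\mu}$, we have $g(t) \to -\infty$ as $t \to \infty$, so the supremum is attained at some $t_\e > 0$. First I would record that the maximiser stays in a fixed compact interval $[T_0,T_1]\subset(0,\infty)$ for all small $\e$: the upper bound follows from the growth of the $t^{22^*_\mu}$ term together with the uniform estimate $\|\Phi_\e\|^2 = S^{n/2}+O(\e^{n-2})$ coming from \eqref{aubtal} and \cite{myang}, while $t_\e$ is bounded away from $0$ because, once the linear-in-$t$ contribution is removed via the equation for $u_\la$, one has $g(t)-I(u_\la) = \tfrac{t^2}{2}\|\Phi_\e\|^2 + o(t^2)$ near $t=0$.

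Next I would expand $g(t)$ around $u_\la$ and reorganise the three pieces of $I$. For the Dirichlet part, $\|u_\la+t\Phi_\e\|^2 = \|u_\la\|^2 + 2t\int_\Om \na u_\la\cdot\na\Phi_\e + t^2\|\Phi_\e\|^2$, and the linear term $\int \na u_\la\cdot\na\Phi_\e$ I would eliminate using the weak-solution identity for $u_\la$ from Proposition \ref{prp4.2}, tested against $\Phi_\e\ge 0$; this is legitimate since $u_\la\ge m>0$ on the compact set $\mathrm{supp}\,\Phi_\e$, so $u_\la^{-q}\Phi_\e\in L^1(\Om)$. For the singular part, monotonicity gives $\int_\Om(|u_\la+t\Phi_\e|^{1-q}-u_\la^{1-q})\ge 0$, so it only lowers $g$, and concavity of $s\mapsto s^{1-q}$ bounds it from above by the tangent term $(1-q)t\int_\Om u_\la^{-q}\Phi_\e$. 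For the nonlocal part I would apply an algebraic inequality of the form $(a+b)^{2^*_\mu}\ge a^{2^*_\mu}+b^{2^*_\mu}+2^*_\mu(a^{2^*_\mu-1}b+ab^{2^*_\mu-1})$ inside the double integral to split $B(u_\la+t\Phi_\e)$ into $B(u_\la)$, the term $t^{22^*_\mu}B(\Phi_\e)$, a first-order cross term that cancels exactly against the gradient term produced by the equation for $u_\la$, and a higher cross term $\int_\Om\int_\Om |x-y|^{-\mu}\Phi_\e^{2^*_\mu}(y)\,u_\la(x)\Phi_\e^{2^*_\mu-1}(x)\,\mathrm{d}x\mathrm{d}y$ that enters with a strictly negative sign.

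After these cancellations the estimate collapses to
\[
g(t_\e) \le I(u_\la) + \sup_{t\ge 0}\left(\frac{t^2}{2}\|\Phi_\e\|^2 - \frac{t^{22^*_\mu}}{22^*_\mu}B(\Phi_\e)\right) + R_\e,
\]
where the explicit supremum equals $\left(\tfrac12-\tfrac{1}{22^*_\mu}\right)\left(\|\Phi_\e\|^{22^*_\mu}/B(\Phi_\e)\right)^{1/(2^*_\mu-1)}$. Inserting the sharp asymptotics of \cite{myang} for the truncated minimiser, namely $\|\Phi_\e\|^2 = S^{n/2}+O(\e^{n-2})$ together with the matching estimate for $B(\Phi_\e)$ so that $\|\Phi_\e\|^{22^*_\mu}/B(\Phi_\e) = S_{H,L}^{2^*_\mu}(1+O(\e^{n-2}))$, and using the algebraic identity $\left(\tfrac12-\tfrac{1}{22^*_\mu}\right)S_{H,L}^{2^*_\mu/(2^*_\mu-1)} = \frac{n-\mu+2}{2(2n-\mu)}S_{H,L}^{(2n-\mu)/(n+2-\mu)}$, this supremum becomes $\frac{n-\mu+2}{2(2n-\mu)}S_{H,L}^{(2n-\mu)/(n+2-\mu)}+O(\e^{n-2})$. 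The remainder $R_\e$ gathers the positive residue of the singular term and the negative higher cross term; the decisive point is that the negative interaction term, whose size is governed by the lower bound $u_\la\ge m>0$ near the concentration point (ultimately Theorem \ref{mainthrm3}), decays more slowly in $\e$ than the positive truncation error $O(\e^{n-2})$, whence $R_\e<0$ for all small $\e$ and the strict inequality follows.

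The main obstacle is exactly this final balancing of orders in $\e$: one must verify that the negative contribution generated by the interaction of $u_\la$ with the concentrating profile strictly dominates both the $O(\e^{n-2})$ truncation error of the bubble and the positive residue left by the singular term. The nonlocality makes the accounting heavier than in the classical Brezis--Nirenberg argument, because every cross term is itself a double integral requiring a separate asymptotic estimate, and the admissible expansion of $(a+b)^{2^*_\mu}$ depends on whether $2^*_\mu\ge 2$ (that is, $\mu\le 4$) or $2^*_\mu<2$; the singular term adds the further requirement of the uniform bound $u_\la\ge m$ on $\mathrm{supp}\,\Phi_\e$ to keep $u_\la^{-q}$ integrable and all interaction integrals sign-definite.
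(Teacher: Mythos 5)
Your strategy coincides with the paper's: test the weak--solution identity for $u_\la$ against $t\Phi_\e$ to cancel the first-order terms, expand the Choquard energy so that $t^{22^*_\mu}B(\Phi_\e)$ survives together with a negative cross term of order $\e^{(n-2)/2}$, and balance this against the truncation errors of the bubble; your closed form for the leading supremum and the exponent identity $\bigl(\tfrac12-\tfrac{1}{22^*_\mu}\bigr)S_{H,L}^{2^*_\mu/(2^*_\mu-1)}=\tfrac{n-\mu+2}{2(2n-\mu)}S_{H,L}^{(2n-\mu)/(n+2-\mu)}$ are correct. But the point you defer as ``the main obstacle'' is precisely where the proof lives, and your stated estimates do not close it. The decisive gap is the singular residue: after the tangent term is removed by the equation, the quantity $-\la\int_\Om\bigl[\tfrac{(u_\la+t\Phi_\e)^{1-q}-u_\la^{1-q}}{1-q}-t\,u_\la^{-q}\Phi_\e\bigr]\mathrm{d}x$ is nonnegative, and the only upper bound your monotonicity/concavity argument yields is $\la t\int_\Om u_\la^{-q}\Phi_\e\,\mathrm{d}x\lesssim t\int_\Om\Phi_\e\,\mathrm{d}x\sim t\,\e^{(n-2)/2}$ --- exactly the same order as the negative interaction term you want to dominate it. With these bounds the sign of $R_\e$ is indeterminate and the strict inequality does not follow. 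The paper supplies the missing idea: a pointwise interpolation inequality $\la\bigl(\tfrac{(c+d)^{1-q}}{1-q}-\tfrac{c^{1-q}}{1-q}-\tfrac{d}{c^q}\bigr)\ge-\rho_1 d^{\rho}/r_3$ for $c\ge m$, $d\ge0$, with a fixed exponent $1<\rho<n/(n-2)$; since $\int_\Om\Phi_\e^{\rho}\,\mathrm{d}x\le r_3\,\e^{(n-2)\rho/2}$ (this is where $\rho<n/(n-2)$ enters), the residue becomes $O(t^{\rho}\e^{(n-2)\rho/2})=o(\e^{(n-2)/2})$, and only then does the cross term win.

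A second genuine defect is your algebraic expansion $(a+b)^{2^*_\mu}\ge a^{2^*_\mu}+b^{2^*_\mu}+2^*_\mu(a^{2^*_\mu-1}b+ab^{2^*_\mu-1})$: it is false on part of the admissible range, not merely when $2^*_\mu<2$. At $a=b$ it reads $2^{2^*_\mu}\ge 2+2\cdot 2^*_\mu$, which fails for all $2\le 2^*_\mu<3$ (e.g.\ $n=6$, $\mu=4$ gives $2^*_\mu=2$ and $4\ge 6$), so you cannot keep both cross terms pointwise. The paper avoids this: it keeps only the first-order cross term globally (its inequality (2)), and extracts the additional negative term $\tfrac{\rho_2t^{22^*_\mu-1}}{22^*_\mu-1}\int_\Om\int_\Om|x-y|^{-\mu}\Phi_\e^{2^*_\mu}(y)\Phi_\e^{2^*_\mu-1}(x)\,\mathrm{d}y\mathrm{d}x$ only in the restricted regime $u_\la\le M$, $t\Phi_\e\ge 1$ (its inequality (3)); this is what forces the split of the analysis into $0\le t\le 1/2$ and $t\ge 1/2$ and the two-piece comparison function $h_\e$, which your argument has no analogue of. Finally, the lower bound $\int_\Om\int_\Om|x-y|^{-\mu}\Phi_\e^{2^*_\mu}(y)\Phi_\e^{2^*_\mu-1}(x)\,\mathrm{d}y\mathrm{d}x\ge r_4\,\e^{(n-2)/2}$, which you merely assert, is itself established in the paper by an explicit computation splitting the regions $|y|\le\e$ and $|y|>\e$; without it, and without the two repairs above, the final balancing of powers of $\e$ has no quantitative content.
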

\begin{proof}
We assume $\epsilon>0$ to be sufficiently small. Since $\eta \equiv 1$ near $x=0$, using \eqref{aubtal} and \eqref{relation} we can find $r_1>0$ such that
\[\int_\Om |\nabla \Phi_\e|^2 \mathrm{d}x \leq S^{n/2}+ r_1\e^{n-2}= C(n,\mu)^{\frac{n(n-2)}{2(2n-\mu)}}S_{H,L}^{n/2}+ r_1 \e^{n-2}.\]
Also using inequality $3.9$ of \cite{myang}, we can find $r_2>0$ such that
\[{\int_{\Om}\int_{\Om} \frac{|\Phi_\e(y)|^{2^*_{\mu}}|\Phi_\e(x)|^{2^*_{\mu}}}{|x-y|^{\mu}}~\mathrm{d}y\mathrm{d}x}\geq C(n,\mu)^{n/2}S_{H,L}^{(2n-\mu)/2}- r_2 \e^{(2n-\mu)/2}.\]
 We now fix $1 < \rho <{n}/{(n-2)}$ and set $\delta= n(n-2)$, $\gamma_\eta = \sup\{|x| : x \in \text{supp }\eta\}$,
 $$r_3= \delta^{(n-2)\rho/4} \int_{|x| \leq \gamma_\eta} |x|^{-(n-2)\rho}~\mathrm{d}x, \; \text{and}\; r_4 = (\delta/4)^{(n+2)/4}\int_{|x|\leq 1}~\mathrm{d}x.$$
 Then we have
 $$ \int_{\Om}|\Phi_\e|^\rho~ \mathrm{d}x \leq r_3 \e^{\frac{(n-2)\rho}{2}}.$$
 Next, we consider the integrals
 \begin{equation*}
 \int_{|x|\leq \e}\int_{|y|\leq \e} \frac{|\Phi_\e(y)|^{2^*_{\mu}}|\Phi_\e(x)|^{2^*_{\mu}-1}}{|x-y|^{\mu}}~\mathrm{d}y\mathrm{d}x \text{ and }  \int_{|x|\leq \e}\int_{|y|> \e} \frac{|\Phi_\e(y)|^{2^*_{\mu}}|\Phi_\e(x)|^{2^*_{\mu}-1}}{|x-y|^{\mu}}~\mathrm{d}y\mathrm{d}x
 \end{equation*}
 separately. Firstly, we see that
 \begin{equation*}
 \begin{split}
  \int_{|x|\leq \e}&\int_{|y|\leq \e}  \frac{|\Phi_\e(y)|^{2^*_{\mu}}|\Phi_\e(x)|^{2^*_{\mu}-1}}{|x-y|^{\mu}}~\mathrm{d}y\mathrm{d}x\\
  &=  \int_{|x|\leq \e}\int_{|y|\leq \e} \frac{(n(n-2))^{\frac{(n-2)(22^*_\mu-1)}{4}} \e^{\frac{(2-n)(22^*_\mu-1)}{2}}}{|x-y|^\mu (1+|\frac{x}{\e}|^2)^\frac{(n-2)(2^*_\mu-1)}{2}(1+|\frac{y}{\e}|^2)^\frac{(n-2)2^*_\mu}{2}}~\mathrm{d}y\mathrm{d}x\\
  & \geq \int_{|x|\leq \e}\int_{|y|\leq \e} \frac{(n(n-2))^{\frac{(n-2)(22^*_\mu-1)}{4}} \e^{\frac{(2-n)(22^*_\mu-1)}{2}-\mu}}{ (1+|\frac{x}{\e}|^2)^\frac{(n-2)(2^*_\mu-1)}{2}(1+|\frac{y}{\e}|^2)^\frac{(n-2)2^*_\mu}{2}}~\mathrm{d}y\mathrm{d}x\\
  & =  \int_{|x|\leq 1}\int_{|y|\leq 1} \frac{(n(n-2))^{\frac{(n-2)(22^*_\mu-1)}{4}} \e^{\frac{n-2}{2}}}{ (1+|x|^2)^\frac{(n-2)(2^*_\mu-1)}{2}(1+|y|^2)^\frac{(n-2)2^*_\mu}{2}}~\mathrm{d}y\mathrm{d}x = o\left(\e^{\frac{n-2}{2}}\right).
 \end{split}
 \end{equation*}
 Secondly, in a similar manner we get
  \begin{equation*}
 \begin{split}
  \int_{|x|\leq \e}\int_{|y|> \e} &\frac{|\Psi_\e(y)|^{2^*_{\mu}}|\Psi_\e(x)|^{2^*_{\mu}-1}}{|x-y|^{\mu}}~\mathrm{d}y\mathrm{d}x\\
  &=  \int_{|x|\leq \e}\int_{|y|>\e} \frac{(n(n-2))^{\frac{(n-2)(22^*_\mu-1)}{4}} \e^{\frac{(2-n)(22^*_\mu-1)}{2}}}{|x-y|^\mu (1+|\frac{x}{\e}|^2)^\frac{(n-2)(2^*_\mu-1)}{2}(1+|\frac{y}{\e}|^2)^\frac{(n-2)2^*_\mu}{2}}~\mathrm{d}y\mathrm{d}x\\
  & \geq \int_{|x|\leq \e}\int_{|y|> \e} \frac{(n(n-2))^{\frac{(n-2)(22^*_\mu-1)}{4}} \e^{\frac{(2-n)(22^*_\mu-1)}{2}}}{ (|y|+\e)^\mu (1+|\frac{x}{\e}|^2)^\frac{(n-2)(2^*_\mu-1)}{2}(1+|\frac{y}{\e}|^2)^\frac{(n-2)2^*_\mu}{2}}~\mathrm{d}y\mathrm{d}x\\
  & =  \int_{|x|\leq 1}\int_{|y|> 1} \frac{(n(n-2))^{\frac{(n-2)(22^*_\mu-1)}{4}} \e^{\frac{n-2}{2}}}{ (1+|x|^2)^\frac{(n-2)(2^*_\mu-1)}{2}(1+|y|^2)^\frac{(n-2)2^*_\mu}{2}(1+|y|)^\mu}~\mathrm{d}y\mathrm{d}x = o\left(\e^{\frac{n-2}{2}}\right).
 \end{split}
 \end{equation*}
 Therefore, we can easily find $r_4>0$ which is independent of $\e$ such that
 \[\int_{|x|\leq \e}\int_\Om \frac{|\Psi_\e(y)|^{2^*_{\mu}}|\Psi_\e(x)|^{2^*_{\mu}-1}}{|x-y|^{\mu}}~\mathrm{d}y\mathrm{d}x \geq r_4 \e^{\frac{n-2}{2}}.\]
 We can find appropriate constants $\rho_1, \rho_2 >0$ such that the following inequalities holds :
\begin{enumerate}
\item  $\displaystyle \la \left( \frac{(c+d)^{1-q}}{1-q} - \frac{c^{1-q}}{1-q} - \frac{d}{c^q} \right) \geq -\frac{\rho_1d^{\rho}}{r_3},$ for all $c \geq m, d\geq 0$,
\item For each $\e, \; t>0$,
\begin{equation*}
\begin{split} &\frac{1}{22^*_\mu}\int_\Om\int_\Om\left(\frac{|(u_\la+t\Psi_\e)(y)|^{2^*_\mu}|(u_\la+t\Psi_\e)(x)|^{2^*_\mu}}{|x-y|^\mu}- \frac{|u_\la(y)|^{2^*_\mu}|u_\la(x)|^{2^*_\mu}}{|x-y|^\mu}\right)~\mathrm{d}y \mathrm{d}x\\
&\quad \quad - \int_\Om\int_\Om \frac{|u_\la(y)|^{2^*_\mu} |u_\la(x)|^{2^*_\mu-2}u_\la(x)t \Psi_\e(x)}{|x-y|^\mu}~\mathrm{d}y\mathrm{d}x\\ &\geq \frac{t^{22^*_\mu}}{22^*_\mu} \int_\Om \int_\Om \frac{|\Psi_\e(x)|^{2^*_\mu}|\Psi_\e(y)|^{2^*_\mu}}{|x-y|^\mu}~\mathrm{d}y\mathrm{d}x,
\end{split}
\end{equation*}
\item
For each $\e>0$, $0\leq u_\la(x)\leq M$ and $t\Psi_\e(x)\geq 1$
\begin{equation*}
\begin{split}
& \frac{1}{22^*_\mu}\int_\Om\int_\Om\left(\frac{|(u_\la+t\Psi_\e)(y)|^{2^*_\mu}|(u_\la+t\Psi_\e)(x)|^{2^*_\mu}}{|x-y|^\mu}- \frac{|u_\la(y)|^{2^*_\mu}|u_\la(x)|^{2^*_\mu}}{|x-y|^\mu}\right)~\mathrm{d}y \mathrm{d}x\\
&\quad  \quad - \int_\Om\int_\Om \frac{|u_\la(y)|^{2^*_\mu} |u_\la(x)|^{2^*_\mu-2}u_\la(x)t \Psi_\e(x)}{|x-y|^\mu}~\mathrm{d}y\mathrm{d}x\\
& \geq \frac{t^{22^*_\mu}}{22^*_\mu} \int_\Om \int_\Om \frac{|\Psi_\e(x)|^{2^*_\mu}|\Psi_\e(y)|^{2^*_\mu}}{|x-y|^\mu}~\mathrm{d}y\mathrm{d}x + \frac{\rho_2 t^{22^*_\mu-1}}{ (22^*_\mu-1)} \int_\Om\int_\Om \frac{|\Psi_\e(y)|^{2^*_{\mu}}|\Psi_\e(x)|^{2^*_{\mu}-1}}{|x-y|^{\mu}}~\mathrm{d}y\mathrm{d}x
\end{split}
\end{equation*}
\end{enumerate}
Since $u_\la$ is a positive weak solution of $(P_\la)$, using above inequalities, we obtain
 {\small \begin{equation*}
 \begin{split}
 &I(u_\la +t\Phi_\epsilon)-I(u_\la)\\
 & = I(u_\la+t\Phi_\epsilon)-I(u_\la)\\
  & \quad\quad - t\left( \int_{\Om} (\nabla u_\la \nabla \Phi_{\textcolor{red}{\epsilon}} -\la u_\la^{-q}\Phi_{\textcolor{red}{\epsilon}})~\mathrm{d}x -
 \int_{\Om}\int_{\Om}\frac{|u_\la(y)|^{2^*_{\mu}}|u_\la(x)|^{2^*_{\mu}-2}u_\la(x)\Phi_{\textcolor{red}{\epsilon}}(x)}{|x-y|^{\mu}}~\mathrm{d}y\mathrm{d}x\right)\\
 &= \frac{t^2}{2} \int_\Om |\nabla \Phi_\e|^2~\mathrm{d}x- \frac{1}{22^*_\mu}\int_\Om\int_\Om\left(\frac{|(u_\la+t\Phi_\e)(y)|^{2^*_\mu}|(u_\la+t\Phi_\e)(x)|^{2^*_\mu}}{|x-y|^\mu}- \frac{|u_\la(y)|^{2^*_\mu}|u_\la(x)|^{2^*_\mu}}{|x-y|^\mu}\right)~\mathrm{d}y \mathrm{d}x\\
 &- \int_\Om\int_\Om \frac{|u_\la(y)|^{2^*_\mu} |u_\la(x)|^{2^*_\mu-2}u_\la(x)t \Phi_\e(x)}{|x-y|^\mu}~\mathrm{d}y\mathrm{d}x - \la \int_{\Om}\left( \frac{|u_\la+t\Phi_\epsilon|^{1-q}-|u_\la|^{1-q}}{1-q}~\mathrm{d}x - t\Phi_\epsilon u_{\la}^{-q} \right)\mathrm{d}x\\
 & \leq \frac{t^2}{2}\left(C(n,\mu)^{\frac{n(n-2)}{2(2n-\mu)}}S_{H,L}^{n/2}+r_1\epsilon^{n-2}\right)-\frac{t^{22^*_\mu}}{22^*_\mu}
 \int_\Om \int_\Om \frac{|\Phi_\e(x)|^{2^*_\mu}|\Phi_\e(y)|^{2^*_\mu}}{|x-y|^\mu}~\mathrm{d}y\mathrm{d}x + \frac{\rho_1t^{\rho}}{r_3}\int_{\Om}|\Phi_\epsilon|^{\rho}\mathrm{d}x\\
 & \leq \frac{t^2}{2}\left(C(n,\mu)^{\frac{n(n-2)}{2(2n-\mu)}}S_{H,L}^{n/2}+r_1\epsilon^{n-2}\right)
 -\frac{t^{22^*_\mu}}{22^*_\mu}\left(C(n,\mu)^{\frac{n}{2}} S_{H,L}^{(2n-\mu)/2}-r_2\epsilon^{(2n-\mu)/2}\right)
 +\rho_1t^{\rho}\epsilon^{(n-2)\rho/2}
 \end{split}
\end{equation*}}
 for $0\leq t\leq 1/2$. Since we can assume $t\Phi_\epsilon \geq 1$, for each $t \geq 1/2$ and $|x| \leq \e$, we have
 \begin{equation*}
 \begin{split}
 &I(u_\la+t\Phi_\epsilon)-I(u_\la)\\
 & \leq \frac{t^2}{2}\left(C(n,\mu)^{\frac{n(n-2)}{2(2n-\mu)}}S_{H,L}^{n/2}+r_1\epsilon^{n-2}\right)-\frac{t^{22^*_\mu}}{22^*_\mu}
 \int_\Om \int_\Om \frac{|\Phi_\e(x)|^{2^*_\mu}|\Phi_\e(y)|^{2^*_\mu}}{|x-y|^\mu}~\mathrm{d}y\mathrm{d}x\\
 &\quad\quad - \frac{\rho_2 t^{22^*_\mu-1}}{ (22^*_\mu-1)} \int_\Om\int_\Om \frac{|\Phi_\e(y)|^{2^*_{\mu}}|\Phi_\e(x)|^{2^*_{\mu}-1}}{|x-y|^{\mu}}~\mathrm{d}y\mathrm{d}x+ \frac{\rho_1t^{\rho}}{r_3}\int_{\Om}|\Phi_\epsilon|^{\rho}\mathrm{d}x\\
 \end{split}
 \end{equation*}
 \begin{equation*}
 \begin{split}
 & \leq \frac{t^2}{2}\left(C(n,\mu)^{\frac{n(n-2)}{2(2n-\mu)}}S_{H,L}^{n/2}+r_1\epsilon^{n-2}\right)
 -\frac{t^{22^*_\mu}}{22^*_\mu}\left(C(n,\mu)^{\frac{n}{2}} S_{H,L}^{(2n-\mu)/2}-r_2\epsilon^{(2n-\mu)/2}\right)\\
 &\quad \quad-\frac{\rho_2 t^{22^*_\mu-1}}{22^*_\mu-1}\e^{(n-2)/2}+\rho_1t^{\rho}\epsilon^{(n-2)\rho/2}.
\end{split}
\end{equation*}
Now, we define a function $h_\epsilon : [0, \infty) \rightarrow \mb R$ by
\begin{align*}
h_\e(t)=\frac{t^2}{2}\left(C(n,\mu)^{\frac{n(n-2)}{2(2n-\mu)}}S_{H,L}^{n/2}+r_1\epsilon^{n-2}\right)
 &-\frac{t^{22^*_\mu}}{22^*_\mu}\left(C(n,\mu)^{\frac{n}{2}} S_{H,L}^{(2n-\mu)/2}-r_2\epsilon^{(2n-\mu)/2}\right)\\
& \quad +\rho_1t^{\rho}\epsilon^{(n-2)\rho/2}
 \end{align*}
 on the interval $[0,1/2)$ and
 \begin{align*}
 h_\e(t)& =\frac{t^2}{2}\left(C(n,\mu)^{\frac{n(n-2)}{2(2n-\mu)}}S_{H,L}^{n/2}+r_1\epsilon^{n-2}\right)
 -\frac{t^{22^*_\mu}}{22^*_\mu}\left(C(n,\mu)^{\frac{n}{2}} S_{H,L}^{(2n-\mu)/2}-r_2\epsilon^{(n-\mu)/2}\right)\\
 & \quad-\frac{\rho_2 t^{22^*_\mu-1}}{22^*_\mu-1}\e^{(n-2)/2}+\rho_1t^{\rho}\epsilon^{(n-2)\rho/2}
 \end{align*}
 on the interval $[1/2,\infty)$.
With some computations, it can be checked that $h_\epsilon$ attains its maximum at
\[ t = \left(\frac{C(n,\mu)^{\frac{n(n-2)}{2(2n-\mu)}}S_{H,L}^{n/2}+r_1\epsilon^{n-2}}{C(n,\mu)^{\frac{n}{2}} S_{H,L}^{(2n-\mu)/2}-r_2\epsilon^{(2n-\mu)/2}}\right)^{\frac{n-2}{2(n-\mu+2)}}- \frac{\rho_2 \e^{(n-2)/2}}{(22^*_\mu-1)C(n,\mu)^{\frac{n}{2}} S_{H,L}^{(2n-\mu)/2}} + o(\e^{(n-2)/2}), \]
Therefore we get
\begin{equation*}
\begin{split}
&\sup\{I(u_\la+t\Phi_\epsilon)-I(u_\la): t\geq 0\}\\
&\leq\frac{n-\mu+2}{2(2n-\mu)}\left(\frac{C(n,\mu)^{\frac{n(n-2)}{2(2n-\mu)}}S_{H,L}^{n/2}+r_1\epsilon^{n-2}}{\left(C(n,\mu)^{\frac{n}{2}} S_{H,L}^{(2n-\mu)/2}-r_2\epsilon^{(2n-\mu)/2}\right)^{(n-2)/(2n-\mu)}}\right)^{\frac{2n-\mu}{(n-\mu+2)}}\\
&\quad- \frac{\rho_2 \e^{(n-2)/2}C(n,\mu)^{\frac{n(n-2)}{2(2n-\mu)}}S_{H,L}^{n/2}}{(22^*_\mu-1)C(n,\mu)^{\frac{n}{2}} S_{H,L}^{(2n-\mu)/2}}+o(\epsilon^{(n-2)/2})\\
 &<\frac{n-\mu+2}{2(2n-\mu)} S_{H,L}^{\frac{2n-\mu}{n-\mu+2}} .
 \end{split}
 \end{equation*}
This completes the proof.\QED
\end{proof}
\begin{Lemma}
There holds $\inf I(\mc N^-_{\la}) < I(u_\la)+\frac{n-\mu+2}{2(2n-\mu)} S_{H,L}^{\frac{2n-\mu}{n-\mu+2}}$.
\end{Lemma}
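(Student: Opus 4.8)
The plan is to deduce the strict inequality from the energy bound just established together with the Nehari structure of Section 3. By the preceding lemma, fix $\e>0$ small enough that
\[ \sup\{I(u_\la+t\Phi_\e): t\ge 0\} < I(u_\la)+\frac{n-\mu+2}{2(2n-\mu)}S_{H,L}^{\frac{2n-\mu}{n-\mu+2}}. \]
It therefore suffices to exhibit a single $t^*\ge 0$ for which $u_\la+t^*\Phi_\e\in \mc N_\la^-$, since then
\[ \inf I(\mc N_\la^-)\le I(u_\la+t^*\Phi_\e)\le \sup\{I(u_\la+t\Phi_\e):t\ge0\}, \]
and the desired conclusion follows at once.

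To produce $t^*$, write $v_t:=u_\la+t\Phi_\e$. Since $u_\la>0$ and $\Phi_\e\ge0$ is bounded, each $v_t\ge u_\la>0$ lies in $H_{+,q}$. By Lemma \ref{lem3.2} and the definition of $\La$, for every $t\ge0$ the fiber map $\phi_{v_t}$ has a unique critical point of local maximum type, i.e.\ a unique $s(t)>0$ with $s(t)v_t\in\mc N_\la^-$; in the notation of Lemma \ref{lem3.2} this is $s(t)=t_2(v_t)$. Because $\phi''_{v_t}(s(t))<0$, the implicit function theorem --- applied exactly as in Lemma \ref{le03}(2) to the $C^\infty$ map defining the critical points --- shows that $t\mapsto s(t)$ is continuous on $[0,\infty)$.

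It remains to track the two ends. At $t=0$ we have $v_0=u_\la\in\mc N_\la^+$, so the $\mc N_\la^+$ scaling of $u_\la$ is $1$, whence $s(0)=t_2(u_\la)>t_1(u_\la)=1$. As $t\to\infty$, one has $\|v_t\|^2=O(t^2)$ and $A(v_t)=O(t^{1-q})$, while $B(v_t)$ grows like $t^{22^*_\mu}$; since $22^*_\mu>2>1-q$, the Choquard term dominates and $\phi'_{v_t}(1)=\|v_t\|^2-\la A(v_t)-B(v_t)\to-\infty$. As $t_1(v_t)\le t_{max}(v_t)=O(t^{-1})<1$ for large $t$, the value $s=1$ must then lie on the second decreasing branch of $\phi_{v_t}$, forcing $s(t)=t_2(v_t)<1$. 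Thus $s(0)>1$ while $s(t)<1$ for all large $t$, so by continuity and the intermediate value theorem there is $t^*>0$ with $s(t^*)=1$, that is $v_{t^*}=u_\la+t^*\Phi_\e\in\mc N_\la^-$, which completes the argument.

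The delicate point is the large-$t$ analysis of $s(t)$: one must verify that $\phi_{v_t}$ is already decreasing at $s=1$ (equivalently $t_2(v_t)<1$), which is precisely where the criticality of the Hardy--Littlewood--Sobolev term enters and which makes the whole scheme work. The remaining ingredients, namely continuity of $s$ via the implicit function theorem and the intermediate value argument, are routine.
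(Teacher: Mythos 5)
Your proof is correct, and it follows the paper's overall skeleton: fix $\e$ small so that the preceding estimate $\sup\{I(u_\la+t\Phi_\e):t\ge 0\}<I(u_\la)+\frac{n-\mu+2}{2(2n-\mu)}S_{H,L}^{\frac{2n-\mu}{n-\mu+2}}$ holds, and then show that the ray $t\mapsto v_t=u_\la+t\Phi_\e$ meets $\mc N_\la^-$. Where you genuinely differ is in how the crossing point is produced. The paper works at the fixed scale $1$: it sets $\sigma_1(t)=\phi'_{v_t}(1)$ and $\sigma_2(t)=\phi''_{v_t}(1)$, lets $t_0=\sup\{t\ge 0:\sigma_2(t)\ge 0\}$ (finite and positive since $\sigma_2(0)=\phi''_{u_\la}(1)>0$ and $\sigma_2\to-\infty$), uses $\la\in(0,\La)$ to conclude $\sigma_1(t_0)>0$, and then, since $\sigma_1\to-\infty$, finds $t'>t_0$ with $\sigma_1(t')=0$ and $\sigma_2(t')<0$, i.e.\ $v_{t'}\in\mc N_\la^-$; this needs only continuity of $\sigma_1,\sigma_2$ in $t$, with no implicit function theorem. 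You instead track the moving scale: the $\mc N_\la^-$-projection $s(t)=t_2(v_t)$, whose continuity comes from the implicit function theorem as in Lemma \ref{le03}(2) together with the uniqueness of the local-maximum critical point (this identification of the local IFT branch with the global $t_2(v_t)$ is the one point that must be said, and you do say it), closed by the intermediate value theorem between $s(0)=t_2(u_\la)>1$ and $s(t)<1$ for large $t$. Both arguments consume the hypothesis $\la\in(0,\La)$ through the same channel, namely that every $\phi_{v_t}$ has exactly two critical points $t_1<t_2$ (the paper needs this only at the single parameter $t_0$, you need it along the whole ray so that $s(t)$ is defined), and both are complete. The paper's version is the more elementary; yours makes the geometric picture explicit but pays for it with the extra large-$t$ asymptotics $t_{max}(v_t)=O(t^{-1})$ and $\phi'_{v_t}(1)\to-\infty$, which you verify correctly.
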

\begin{proof}
We start by fixing sufficiently small $\epsilon >0$ as in the previous lemma and define functions $\sigma_1, \sigma_2: [0,\infty) \rightarrow \mb R$ by
\begin{align*}
\sigma_1(t) = \int_\Om |\nabla (u_\la+t\Psi_\e)|^2\mathrm{d}x &- \la \int_{\Om}|u_\la+t\Phi_\epsilon|^{1-q}~\mathrm{d}x\\
 &\quad-\int_{\Om}\int_{\Om}\frac{|\textcolor{red}{(u_\la+t\Psi_\e)}(y)|^{2^*_{\mu}}|\textcolor{red}{(u_\la+t\Psi_\e)}(x)|^{2^*_{\mu}}}{|x-y|^{\mu}}~\mathrm{d}y\mathrm{d}x,
 \end{align*}
\begin{align*}
 \text{and }\sigma_2(t) = \int_\Om |\nabla (u_\la+t\Psi_\e)|^2\mathrm{d}x &+\la q \int_{\Om}|u_\la+t\Phi_\epsilon|^{1-q}~\mathrm{d}x\\
  & \quad-(22^*_\mu-1) \int_{\Om}\int_{\Om}\frac{|\textcolor{red}{(u_\la+t\Psi_\e)}(y)|^{2^*_{\mu}}|\textcolor{red}{(u_\la+t\Psi_\e)}(x)|^{2^*_{\mu}}}{|x-y|^{\mu}}~\mathrm{d}y\mathrm{d}x,
\end{align*}
Let $t_0 = \sup\{t \geq 0: \sigma(t)\geq 0 \}$, then $\sigma_2(0) = \phi^{\prime\prime}_{u_\la}(1) >0$ and $\sigma_2(t) \rightarrow -\infty$ as $t \rightarrow \infty$ which implies $0<t_0<\infty$. As $\la \in (0, \Lambda)$, we obtain $\sigma_1(t_0)>0$ and since $\sigma_1(t) \rightarrow -\infty$ as $t \rightarrow \infty$, there exists $t^{\prime} \in (t_0, \infty)$ such that $\sigma_1(t^{\prime})=0.$ This gives $\phi^{\prime \prime}_{u_\la+t^{\prime}\Phi_\epsilon}(1)<0$, because $t^{\prime}>t_0$ which implies $\sigma_2(t^{\prime})<0.$ Hence, $ (u_\la+t^{\prime}\Phi_\epsilon)\in \mc N^-_{\la}$ and using previous lemma, we get
\[\inf I({\mc N^-_\la}) \leq I(u_\la+t\Psi_\e) < I(u) + \frac{n-\mu+2}{2(2n-\mu)} S_{H,L}^{\frac{2n-\mu}{n-\mu+2}}.\hspace{3.5cm}\text{ \QED}\]
\end{proof}

\begin{Proposition}\label{minattain2}
There exists $v_\la \in \mc N_{\la}^{-}$ satisfying $I(v_\la) = \inf I(\mc N_{\la}^{-})$.
\end{Proposition}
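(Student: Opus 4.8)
The plan is to adapt the concentration--compactness scheme of Proposition \ref{minattain1} to the set $\mc N_\la^-$, using the energy estimate of the previous lemma to recover compactness at the critical level. First I would pick a minimizing sequence $\{v_k\}\subset\mc N_\la^-$ with $I(v_k)\to\inf I(\mc N_\la^-)$. By Lemma \ref{le01}$(ii)$ this sequence is bounded in $H^1_0(\Om)$ and bounded away from $0$, so after passing to a subsequence $v_k\rightharpoonup v_\la$ weakly in $H^1_0(\Om)$, $v_k\to v_\la$ in $L^r(\Om)$ for every $r<2^*$ and a.e.\ in $\Om$; since $v_k\ge 0$ we also get $v_\la\ge 0$.

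Next I would set $w_k=v_k-v_\la$ and combine the Brezis--Lieb lemma with the nonlocal splitting of \cite{myang} (its Lemma $2.2$), writing $\|w_k\|^2\to c^2$ and $\int_\Om\int_\Om\frac{|w_k(x)|^{2^*_\mu}|w_k(y)|^{2^*_\mu}}{|x-y|^\mu}\,\mathrm{d}x\mathrm{d}y\to d^{22^*_\mu}$. Using $\phi^{\prime}_{v_k}(1)=0$ and the compactness of the term $\int_\Om|v_k|^{1-q}\,\mathrm{d}x$, this yields
\[
\phi^{\prime}_{v_\la}(1)+c^2-d^{22^*_\mu}=0 \quad\text{and}\quad \inf I(\mc N_\la^-)=I(v_\la)+\frac{c^2}{2}-\frac{d^{22^*_\mu}}{22^*_\mu},
\]
while the definition of $S_{H,L}$ gives $c^2\ge S_{H,L}d^2$.

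I would then rule out $v_\la\equiv 0$. In that case the two identities force $c^2=d^{22^*_\mu}$, and together with $c^2\ge S_{H,L}d^2$ this gives $\inf I(\mc N_\la^-)=\big(\tfrac12-\tfrac1{22^*_\mu}\big)c^2\ge \frac{n-\mu+2}{2(2n-\mu)}S_{H,L}^{\frac{2n-\mu}{n-\mu+2}}$. Since $u_\la$ minimizes $I$ over $\mc N_\la^+$ with $I(u_\la)=\inf I(\mc N_\la^+)<0$ by Proposition \ref{minattain1}, this contradicts the previous lemma, so $v_\la\in H_{+,q}$. By Lemma \ref{lem3.2} there are then $0<t_1<t_2$ with $t_1v_\la\in\mc N_\la^+$ and $t_2v_\la\in\mc N_\la^-$. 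To conclude I would show $c=0$, arguing as in Proposition \ref{minattain1}: I introduce the auxiliary fiber map $h(t)=\phi_{v_\la}(t)+\tfrac{c^2t^2}{2}-\tfrac{d^{22^*_\mu}t^{22^*_\mu}}{22^*_\mu}$, which satisfies $h(1)=\inf I(\mc N_\la^-)$ and $h^{\prime}(1)=0$, and I split into cases according to whether $t_2<1$ or $t_2\ge 1$; each case that would keep $c\neq 0$ is contradicted either by $I(t_2v_\la)\ge\inf I(\mc N_\la^-)$ together with the monotonicity of $h$, or by the energy gap of the previous lemma. Once $c=0$ we have $v_k\to v_\la$ strongly in $H^1_0(\Om)$; passing to the limit in $\phi^{\prime}_{v_k}(1)=0$ and $\phi^{\prime\prime}_{v_k}(1)<0$ gives $\phi^{\prime}_{v_\la}(1)=0$ and $\phi^{\prime\prime}_{v_\la}(1)\le 0$, and since $\mc N_\la^0=\{0\}$ for $\la\in(0,\Lambda)$ while $v_\la\neq 0$, in fact $\phi^{\prime\prime}_{v_\la}(1)<0$. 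Hence $v_\la\in\mc N_\la^-$ and $I(v_\la)=\inf I(\mc N_\la^-)$.

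The main obstacle is the loss of compactness from the critical Hardy--Littlewood--Sobolev term: a priori the minimizing sequence may concentrate and split off a standard bubble, which carries energy at least $\frac{n-\mu+2}{2(2n-\mu)}S_{H,L}^{\frac{2n-\mu}{n-\mu+2}}$ (this is exactly the lower bound produced by $c^2=d^{22^*_\mu}$ and $c^2\ge S_{H,L}d^2$). The strict inequality $\inf I(\mc N_\la^-)<I(u_\la)+\frac{n-\mu+2}{2(2n-\mu)}S_{H,L}^{\frac{2n-\mu}{n-\mu+2}}$ of the previous lemma, combined with $I(u_\la)<0$, is precisely what forbids such a split and forces $c=d=0$, restoring strong convergence and the attainment of the infimum.
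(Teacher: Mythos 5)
Your proposal is correct and follows the paper's own proof essentially step for step: the same minimizing sequence and Brezis--Lieb/nonlocal splitting with limits $c^2$ and $d^{22^*_\mu}$, the same exclusion of $v_\la\equiv 0$ via the strict energy bound of the preceding lemma combined with $I(u_\la)=\inf I(\mc N_\la^{+})<0$, and the same fibering-map case analysis on $t_2$ with the auxiliary function $g(t)=\phi_{v_\la}(t)+\tfrac{c^2t^2}{2}-\tfrac{d^{22^*_\mu}t^{22^*_\mu}}{22^*_\mu}$ forcing $c=d=0$ and hence strong convergence. The only difference is bookkeeping: the paper splits the case $t_2\geq 1$ into $d>0$ (excluded by the energy gap) and $d=0$ (excluded by passing to the limit in $\phi^{\prime\prime}_{v_k}(1)<0$ and using the fiber-map structure), a detail your sketch glosses over by attributing every contradiction to the two named mechanisms, though the needed second-derivative information is exactly what you invoke at the end.
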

\begin{proof}
Let $\{v_k\}$ be a sequence in $\mc N^-_{\la}$ such that $I(v_k) \rightarrow \inf I(\mc N^-_{\la})$ as $k \rightarrow \infty$. Using lemma \ref{le01}, we may assume that there exist $v_\la \in H^1_0(\Om)$ such that $v_k \rightharpoonup v_\la$ weakly as $k \rightarrow \infty$ in $H^1_0(\Om)$. We set $z_k := (v_k - v_\la)$ and claim that $v_k \rightarrow v_\la$ strongly as $k \rightarrow \infty$ in $H^1_0(\Om)$. Suppose $\|z_k\|^2 \rightarrow c^2$ and ${\int_{\Om}\int_{\Om} \frac{|z_k(y)|^{2^*_{\mu}}|z_k(x)|^{2^*_{\mu}}}{|x-y|^{\mu}}~\mathrm{d}y\mathrm{d}x} \rightarrow d^{22^*_\mu}$ as $k \rightarrow \infty$. Using Brezis-Lieb lemma and Lemma 2.2 of \cite{myang}, we have
\[\|v_k\|^2 =  \|z_k\|^2 +\|v_\la\|^2 + o_k(1), \; \text{and}\]
\begin{equation*}
\begin{split}
\int_{\Om} \int_{\Om} \frac{|v_k(x)|^{2^*_{\mu}}|v_k(y)|^{2^*_{\mu}}}{|x-y|^{\mu}}~\mathrm{d}x\mathrm{d}y
& = \int_{\Om} \int_{\Om} \frac{|z_k(x)|^{2^*_{\mu}}|z_k(y)|^{2^*_{\mu}}}{|x-y|^{\mu}}~\mathrm{d}x\mathrm{d}y\\
 &+\int_{\Om} \int_{\Om} \frac{|v_\la(x)|^{2^*_{\mu}}|v_\la(y)|^{2^*_{\mu}}}{|x-y|^{\mu}}~\mathrm{d}x\mathrm{d}y +o_k(1).
\end{split}
\end{equation*}
Since $v_k \in \mc N^-_{\la}$, we obtain
\begin{equation}\label{eq8}
0  = \lim_{k \rightarrow \infty} \phi^{\prime}_{v_k}(1) = \phi^{\prime}_{v_\la}(1)+c^2 -d^{22^*_\mu}
\end{equation}
which implies
$$ \|v_\la\|^2+c^2 = \la \int_{\Om}|v_\la|^{1-q}\mathrm{d}x + \int_{\Om}\int_{\Om}\frac{|v_\la(x)|^{2^*_{\mu}}|v_\la(y)|^{2^*_{\mu}}}{|x-y|^{\mu}}~\mathrm{d}y\mathrm{d}x   +d^{22^*_\mu}.$$
We claim that $v_\la \in H_{+,q}$. Suppose $v_\la =0$, this implies $c \neq 0$ (using lemma \ref{le01}(ii)) and thus
\[ \inf I(\mc N^-_{\la}) = \lim I(v_k) = I(0)+\frac{c^2}{2}-\frac{d^{22^*_\mu}}{22^*_\mu} \geq \frac{n-\mu+2}{2(2n-\mu)}S_{H,L}^\frac{2n-\mu}{n-\mu+2},\]
as done in Lemma \ref{minattain1}. But by previous lemma, $ \inf I(\mc N^-_\la) < I(u_\la)+ \frac{n-\mu+2}{2(2n-\mu)}S_{H,L}^\frac{2n-\mu}{n-\mu+2}$ implying $\inf I(\mc N^+_\la)=I(u_\la) >0$, which is a contradiction. So $v_\la \in H_{+,q}$ and thus, our assumption $\la \in (0, \Lambda)$ says that there exists $0<t_1<t_2$ such that $\phi^{\prime}_{v_\la}(t_1)= \phi^{\prime}_{v_\la}(t_2)=0$ and $t_1 v_\la \in \mc N^+_{\la}$, $t_2 v_\la \in \mc N^-_{\la}$. Let us define $f,g :(0,\infty) \rightarrow \mb R$ by
\begin{equation}\label{eq11}
 f(t) = \frac{c^2t^2}{2}-\frac{d^{22^*_\mu}t^{22^*_\mu}}{22^*_\mu} \; \text{ and } \; g(t) = \phi_{v_\la}(t)+ f(t).
\end{equation}
Then, following three cases arise :\\
\noi (i) $t_2 < 1,$\\
\noi(ii) $t_2 \geq 1$ and $d >0$, and \\
\noi(iii) $t_2 \geq 1$ and $d = 0$.

\noi Case (i) $t_2 <1$ implies $g^{\prime}(1)= \phi^{\prime}_{v_\la}(1)+ f^{\prime}(1) = 0$, using $\eqref{eq11}$ and $g^{\prime}(t_2)= \phi^{\prime}_{v_\la}(t_2)+ f^{\prime}(t_2) = t_2(c^2-d^{22^*_\mu}t_{2}^{22^*_\mu-2}) \geq t_2(c^2-d^{22^*_\mu})>0$. This implies that $g$ is increasing on $[t_2,1]$ and we have
\[ \inf I(\mc N^-_{\la}) = g(1)> g(t_2) \geq I(t_2 v_\la)+\frac{{t_2^2}}{2}(c^2-d^{22^*_\mu}) > I(t_2 v_\la) \geq \inf I(\mc N^-_{\la}) \]
which is a contradiction.\\
\noi Case (ii) Let $\underline{t}=(c^2/d^{22^*_\mu})^{\frac{1}{22^*_\mu-2}}$ and we can check that $f$ attains its maximum at $\underline{t}$ and $$f(\underline{t})= \frac{c^2{\underline{t}}^2}{2}-\frac{d^{22^*_\mu}{\underline{t}}^{22^*_\mu}}{22^*_\mu}=\frac{n-\mu+2}{2(2n-\mu)}\left( \frac{c}{d}\right)^{\frac{22^*_\mu}{2^*_\mu-1}} \geq \frac{n-\mu+2}{2(2n-\mu)} S_{H,L}^{\frac{2n-\mu}{n-\mu+2}}. $$
Also, $f^{\prime}({t})= (c^2-d^{22^*_\mu}t^{22^*_\mu-2})t >0 $ if $0<t<\underline{t}$ and $f^{\prime}(t) <0$ if $t > \underline{t}$. Moreover, we know $g(1) = \max \limits_{t>0} \{g(t)\} \geq g(\underline{t})$ using the assumption $\la \in (0, \Lambda)$. If $\underline{t}\leq 1$, then we have
\[ \inf I(\mc N^-_{\la}) = g(1)\geq g(\underline{t})=I(\underline{t}v_\la)+ f(\underline{t}) \geq I(t_1 v_\la)+\frac{n-\mu+2}{2(2n-\mu)} S_{H,L}^{\frac{2n-\mu}{n-\mu+2}} \]
which contradicts the previous lemma. Thus, we must have $\underline{t}>1$. Since $g^{\prime}(t) \leq 0$ for $t\geq 1$, there holds $\phi^{\prime\prime}_{v_\la}(t) \leq -f^{\prime}(t) \leq 0 $ for $1\leq t \leq \underline{t}$. Then we have $\underline{t}\leq t_1$ or $t_2=1$. If $\underline{t}\leq t_1$ then
\[ \inf I(\mc N^-_{\la})= g(1) \geq  g(\underline{t})=I(\underline{t}v_\la)+ f(\underline{t}) \geq I(t_1v_\la)+ \frac{n-\mu+2}{2(2n-\mu)} S_{H,L}^{\frac{2n-\mu}{n-\mu+2}}\]
which is a contradiction. If $t_2=1$ then using $c^2=d^{22^*_\mu}$ we get
\[ \inf I(\mc N^-_{\la})=g(1)= I(v_\la)+ \left(\frac{c^2}{2}-\frac{d^{22^*_\mu}}{22^*_\mu}\right)\geq I(v_\la) + \frac{n-\mu+2}{2(2n-\mu)} S_{H,L}^{\frac{2n-\mu}{n-\mu+2}} \]
which is a contradiction and thus, only case (iii) holds. If $c \neq 0$, then $\phi^{\prime}_{v_\la}(1) = -c^2 <0$ and $\phi^{\prime \prime}_{v_\la}(1) = -c^2<0$ which contradicts $t_2 \geq 1$. Thus, $c=0$ which implies $v_k \rightarrow v_\la$ strongly as $k \rightarrow \infty$ in $H^1_0(\Om)$. Consequently, $v_\la \in \mc N^-_{\la}$ and $\inf I(\mc N^-_{\la})= I(v_\la)$.\QED
\end{proof}

\begin{Proposition}\label{prp5.4}
 For $\la \in (0,\La),$ $v_\la$ is a positive weak solution of $(P_\la)$.
\end{Proposition}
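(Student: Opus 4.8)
The plan is to follow the scheme of Proposition \ref{prp4.2}, with one essential modification: since $v_\la$ minimises $I$ over $\mc N_\la^-$, which consists of \emph{local maxima} of the fibre maps, one cannot perturb $v_\la$ directly and conclude $I(v_\la+\e\psi)\ge I(v_\la)$; instead I would project each perturbation back onto $\mc N_\la^-$. First, fix $\psi\in C_c^\infty(\Om)$. Since $v_\la>0$ almost everywhere (the concluding assertion of the lemma that yields \eqref{vpos}) and $v_\la\in L^\infty(\Om)$ by Lemma \ref{L-infty}, there is $\al>0$ with $v_\la\ge\al$ on $\mathrm{supp}\,\psi$. Hence $v_\la+\e\psi\ge 0$, and so $v_\la+\e\psi\in H_{+,q}$, for all $\e$ in a \emph{symmetric} neighbourhood of $0$; keeping both signs of $\e$ available is precisely what will turn the minimality inequality into the equality we need.

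Next, for each such $\e$, Lemma \ref{lem3.2} furnishes a unique $t_\e>0$ with $t_\e(v_\la+\e\psi)\in\mc N_\la^-$, and I would invoke the implicit function theorem exactly as in Lemma \ref{le03}(2) --- now on a two-sided neighbourhood of $\e=0$, which is legitimate because $v_\la\ge\al>0$ on $\mathrm{supp}\,\psi$ keeps all the relevant quantities smooth in $\e$ --- to get that $\e\mapsto t_\e$ is $C^1$ near $0$ with $t_0=1$. Minimality of $v_\la$ on $\mc N_\la^-$ then gives that
\[ \Gamma(\e):=I\big(t_\e(v_\la+\e\psi)\big)\ge I(v_\la)=\Gamma(0), \]
so that $\e=0$ is a local minimum of $\Gamma$.

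The crux is a chain-rule computation designed so that the derivative of the projection factor never enters. Writing $G(\e,s)=I\big(s(v_\la+\e\psi)\big)=\phi_{v_\la+\e\psi}(s)$, the defining property $t_\e(v_\la+\e\psi)\in\mc N_\la$ says $\partial_s G(\e,t_\e)=\phi_{v_\la+\e\psi}'(t_\e)=0$, whence
\[ \Gamma'(\e)=\partial_\e G(\e,t_\e)+\partial_s G(\e,t_\e)\,t_\e'=\partial_\e G(\e,t_\e), \]
so the factor $t_\e'$ drops out regardless of its value. Evaluating at $\e=0$ (where $t_0=1$) leaves $\Gamma'(0)=\partial_\e G(0,1)$, which is just the directional derivative of $I$ at $v_\la$ along $\psi$; because $v_\la\ge\al$ on $\mathrm{supp}\,\psi$ this derivative exists and equals
\[ \int_\Om \na v_\la\na\psi\,\mathrm{d}x-\la\int_\Om v_\la^{-q}\psi\,\mathrm{d}x-\int_\Om\int_\Om\frac{|v_\la(y)|^{2^*_\mu}|v_\la(x)|^{2^*_\mu-2}v_\la(x)\psi(x)}{|x-y|^\mu}\,\mathrm{d}y\mathrm{d}x. \]
Since $\Gamma'(0)=0$ by minimality and $\psi$ is arbitrary, this is exactly the weak formulation of $(P_\la)$, and positivity of $v_\la$ has already been recorded in the lemma producing \eqref{vpos}, completing the proof.

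I expect the main obstacle to be the non-differentiability of $I$ caused by the singular term $u^{-q}$: it is what forces the careful projection argument and what must be controlled when asserting both that $\e\mapsto t_\e$ is $C^1$ across $\e=0$ (not merely one-sided as in Lemma \ref{le03}) and that $\partial_\e G(0,1)$ exists with the stated form. The uniform bound $v_\la\ge\al$ on $\mathrm{supp}\,\psi$ is the device that defuses this: it makes $\e\mapsto\int_\Om|v_\la+\e\psi|^{1-q}\,\mathrm{d}x$ a smooth function near $0$ (its integrand being smooth where $\psi\neq0$ and $\e$-independent elsewhere), so the global non-smoothness of $I$ on $H^1_0(\Om)$ never obstructs this particular one-parameter variation.
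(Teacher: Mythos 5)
Your proposal is correct, and it rests on the same foundation as the paper's own proof --- perturb $v_\la$, project $v_\la+\e\psi$ back onto $\mc N_\la^-$ via the factor $t_\e$, and exploit minimality of $I(v_\la)$ over $\mc N_\la^-$ --- but the finishing mechanism is genuinely different, so a comparison is worthwhile. The paper never differentiates the projection: it uses only the continuity $t_\e\to1$ supplied by Lemma \ref{le03}(2), together with the auxiliary inequality $I(t_\e v_\la)\le I(v_\la)$ (valid because $t=1$ is a local maximum of $\phi_{v_\la}$), to produce the squeeze
\[
0\le \frac{I(t_\e(v_\la+\e\psi))-I(v_\la)}{\e}\le \frac{I(t_\e(v_\la+\e\psi))-I(t_\e v_\la)}{\e},
\]
then passes to the one-sided limit $\e\to0^+$ and recovers \emph{equality} in the weak formulation only at the end, by running the argument with $\psi$ replaced by $-\psi$. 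You instead differentiate $\Gamma(\e)=G(\e,t_\e)$ by the chain rule and let the Nehari constraint $\partial_s G(\e,t_\e)=\phi'_{v_\la+\e\psi}(t_\e)=0$ annihilate the unknown factor $t_\e'$ (the natural-constraint principle), so that two-sided minimality gives $\Gamma'(0)=0$ and hence equality at once, with no need for the auxiliary inequality or the sign-flip step. The price is that you must know $\e\mapsto t_\e$ is $C^1$ across $\e=0$, which is strictly more than Lemma \ref{le03}(2) asserts (it yields only continuity); your justification of this extra regularity is the one genuinely new ingredient, and it does hold: $h$ is smooth, $\partial_t h\neq0$ at the base point since $\phi''_{v_\la}(1)<0$, and the three coefficient maps $\e\mapsto\|v_\la+\e\psi\|^2$, $\e\mapsto\int_\Om|v_\la+\e\psi|^{1-q}\,\mathrm{d}x$ and $\e\mapsto\int_\Om\int_\Om|x-y|^{-\mu}\,|(v_\la+\e\psi)(x)|^{2^*_\mu}|(v_\la+\e\psi)(y)|^{2^*_\mu}\,\mathrm{d}x\mathrm{d}y$ are $C^1$ near $\e=0$ --- the singular one because $v_\la\ge\al>0$ on $\mathrm{supp}\,\psi$ (Lemma \ref{L-infty} plus the positivity recorded with \eqref{vpos}), the Choquard one because it is a $C^1$ functional on $H^1_0(\Om)$ by the Hardy--Littlewood--Sobolev inequality --- so the implicit function theorem indeed yields a $C^1$ branch and your chain-rule computation is legitimate.
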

\begin{proof}
Let $\psi \in C^{\infty}_c(\Om)$. By Lemma \ref{L-infty} and \ref{solpos}, since $v_\la > 0$, we can find $ \alpha >0$ such that $v_\la \geq \alpha$ on support of $\psi$. Also, $t_{\epsilon} \rightarrow 1$ as $\epsilon \rightarrow 0+$, where $t_{\epsilon}$ is the unique positive real number corresponding to $(v_\la+\epsilon \psi)$ such that $t_\epsilon (v_\la+\epsilon \psi) \in \mc N^{-}_{\la}$. Then, by lemma \ref{le03} we have
\begin{equation*}
\begin{split}
0 & \leq \lim\limits_{\epsilon \rightarrow 0}\frac{I(t_\e(v_\la+\epsilon\psi)) - I(v_\la)}{\epsilon} \leq \lim \limits_{\epsilon \rightarrow 0}\frac{I(t_{\epsilon}(v_\la+\epsilon\psi)) - I(t_{\epsilon} v_\la)}{\epsilon}\\
& = \int_\Om \nabla v_\la \nabla \psi~\mathrm{d}x - \la \int_\Om  v_{\la}^{-q}\psi~\mathrm{d}x - \int_{\Om}\int_{\Om}\frac{|v_\la(y)|^{2^*_{\mu}}|v_\la(x)|^{2^*_{\mu}-2}v_\la(x)\varphi(x)}{|x-y|^{\mu}}~\mathrm{d}y\mathrm{d}x.
\end{split}
\end{equation*}
Since $\psi \in C^{\infty}_c(\Om)$ is arbitrary, we conclude that $v_{\la}$ is positive weak solution of $(P_\la)$.\QED
\end{proof}

\section{Regularity of  weak solutions}

In this section, we shall prove some regularity properties of positive weak solutions of $(P_{\la})$. We begin with the following lemma.

\begin{Lemma}\label{reg}
Suppose $u$ is a  weak solution of $(P_{\la})$, then for each $w \in H_0^1(\Om)$, it satisfies
$ u^{-q}w \in L^{1}(\Om)$ and
 \[\int_\Om \nabla u\nabla w~\mathrm{d}x- \la \int_\Om u^{-q}w~\mathrm{d}x - \int_{\Om}\int_{\Om}\frac{|u(y)|^{2^*_{\mu}}|u(x)|^{2^*_{\mu}-2}u(x)w(x)}{|x-y|^{\mu}}~\mathrm{d}y\mathrm{d}x = 0. \]
\end{Lemma}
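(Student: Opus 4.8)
\emph{Plan.} The statement is precisely the assertion that the weak formulation, which the definition of a weak solution guarantees only for test functions $\psi \in C_c^\infty(\Om)$, in fact extends to every $w \in H^1_0(\Om)$, together with the integrability $u^{-q}w \in L^1(\Om)$. The plan is to prove both claims at once by an approximation argument: first reduce to $w \geq 0$, then exhaust such $w$ by compactly supported functions via Lemma \ref{lem2.1}. The decisive point will be that the integrability of the singular term is not assumed in advance but \emph{deduced} from the convergence of the two regular terms.

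\emph{Reduction and the two regular terms.} Writing $w = w^+ - w^-$ with $w^\pm \in H^1_0(\Om)$, $w^\pm \geq 0$, it suffices by linearity to treat $w \geq 0$. First I would record that the functional $\langle F, \phi\rangle := \int_\Om\int_\Om \frac{|u(y)|^{2^*_\mu}|u(x)|^{2^*_\mu-2}u(x)\phi(x)}{|x-y|^\mu}\,\mathrm{d}y\mathrm{d}x$ is bounded on $H^1_0(\Om)$: applying Proposition \ref{Hardy-littlewood} with $t=r=2n/(2n-\mu)$ to the functions $|u|^{2^*_\mu}$ and $|u|^{2^*_\mu-1}|\phi|$, followed by Hölder's inequality, gives $|\langle F,\phi\rangle| \leq C(n,\mu)\,|u|_{2^*}^{22^*_\mu-1}|\phi|_{2^*} \leq C\|u\|^{22^*_\mu-1}\|\phi\|$, so $F$ is continuous for strong $H^1_0$-convergence. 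Likewise $\phi \mapsto \int_\Om \na u\cdot\na\phi\,\mathrm{d}x$ is continuous. These two facts let me pass to the limit in the two regular terms under any $H^1_0$-approximation of $w$.

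\emph{From $C_c^\infty$ to compactly supported $w_k$.} For $w \geq 0$ I would take the sequence $\{w_k\}$ of Lemma \ref{lem2.1}: $0 \leq w_1 \leq w_2 \leq \cdots$, each $w_k$ compactly supported in $\Om$, and $w_k \to w$ in $H^1_0(\Om)$. To obtain the identity for a fixed $w_k$, note that $u$ is a nonnegative weak supersolution, since $-\De u = \la u^{-q} + (\text{Choquard term}) \geq 0$; hence by the weak Harnack inequality (see \cite{GT}) one has $u \geq c_{\Om'}>0$ on any $\Om' \Subset \Om$, so $u^{-q} \leq c_{\Om'}^{-q} \in L^\infty(\Om')$ for a fixed $\Om'$ containing $\mathrm{supp}\,w_k$. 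Mollifying $w_k$ into functions $\psi_j \in C_c^\infty(\Om')$ with $\psi_j \to w_k$ in $H^1_0(\Om)$, the $C_c^\infty$-identity holds for each $\psi_j$; letting $j\to\infty$, the gradient term and $\langle F,\cdot\rangle$ converge by the previous paragraph, while $\int_\Om u^{-q}\psi_j \to \int_\Om u^{-q}w_k$ because $u^{-q}$ is bounded on $\Om'$ and $\psi_j \to w_k$ in $L^1(\Om')$. Thus the identity holds with $w_k$.

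\emph{Passage $k\to\infty$ and the main obstacle.} Finally I let $k \to \infty$ in the rearranged identity $\la\int_\Om u^{-q}w_k = \int_\Om \na u\cdot\na w_k\,\mathrm{d}x - \langle F, w_k\rangle$. The right-hand side converges to the finite number $\int_\Om \na u\cdot\na w\,\mathrm{d}x - \langle F, w\rangle$. Since $w_k \uparrow w$ almost everywhere (monotonicity together with the almost-everywhere convergence of a subsequence) and $u^{-q}\geq 0$, the monotone convergence theorem gives $\int_\Om u^{-q}w_k \uparrow \int_\Om u^{-q}w \in [0,\infty]$; comparing with the finite right-hand side forces $\int_\Om u^{-q}w < \infty$, that is $u^{-q}w \in L^1(\Om)$, and simultaneously yields the claimed identity for $w$. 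The general case follows by linearity. The main obstacle is exactly this singular term: its integrability cannot be presumed, and the crux of the argument is that the monotone convergence theorem, combined with the convergence of the two regular terms, delivers both the integrability and the passage to the limit at once; the only additional care needed is the local positivity of $u$ used to legitimize the $C_c^\infty$-to-$H^1_0$ step.
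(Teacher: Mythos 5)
Your proof is correct and follows essentially the same route as the paper's: approximate a nonnegative $w$ by the monotone, compactly supported sequence of Lemma \ref{lem2.1}, pass to the limit using strong $H^1_0(\Om)$-convergence for the gradient and Choquard terms and the monotone convergence theorem for the singular term (which yields $u^{-q}w\in L^1(\Om)$ and the identity simultaneously), then split a general $w$ as $w^+-w^-$. The only difference is that you explicitly justify the steps the paper merely asserts, namely the extension of the test-function identity from $C_c^\infty(\Om)$ to compactly supported $H^1_0(\Om)$ functions (via weak Harnack local positivity and mollification) and the $H^1_0$-continuity of the Choquard term (via Hardy--Littlewood--Sobolev and H\"older).
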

\begin{proof}
Let $u$ be a  weak solution of $(P_{\la})$ and $w \in H_{+}$. By \textcolor{red}{Lemma} $\ref{lem2.1}$, we obtain a sequence $\{w_k \} \in H^1_{0}$ such that $\{w_k\} \rightarrow w$ strongly as $k \rightarrow \infty$ in $H^1_0(\Om)$, each $w_k$ has compact support in $
\Om$ and $0 \leq w_1 \leq w_2 \leq \ldots$ Since each $w_k$ has compact support in $\Om$ and $u$ is a positive weak solution of $(P_\la)$, for each $k$ we obtain
\[  \la \int_\Om  u^{-q}w_k~\mathrm{d}x =  \int_\Om \nabla u\nabla w_k~\mathrm{d}x-\int_{\Om}\int_{\Om}\frac{|u(y)|^{2^*_{\mu}}|u(x)|^{2^*_{\mu}-2}u(x)w_k(x)}{|x-y|^{\mu}}~\mathrm{d}y\mathrm{d}x  .\]
Using monotone convergence theorem, we obtain $u^{-q}w \in L^{1}(\Om)$  and
\[\la \int_\Om u^{-q}w~\mathrm{d}x = \int_\Om \nabla u\nabla w~\mathrm{d}x-\int_{\Om}\int_{\Om}\frac{|u(y)|^{2^*_{\mu}}|u(x)|^{2^*_{\mu}-2}u(x)w(x)}{|x-y|^{\mu}}~\mathrm{d}y\mathrm{d}x. \]
If $w \in H^1_0(\Om)$, then $w = w^+ - w^-$ and $w^+, w^- \in H_{+}$. Since we proved the lemma for each $w \in H_+$, we obtain the conclusion.\QED
\end{proof}
\begin{Lemma}
Let $u$ be a positive weak solution of $(P_{\la})$. Then $u \in L^p(\Om)$, for each $p \in [1,\infty)$.
\end{Lemma}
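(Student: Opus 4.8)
The plan is to run a Brezis--Kato type iteration, using the extended test--function identity of Lemma \ref{reg} (valid for every $w\in H^1_0(\Om)$, with $u^{-q}w\in L^1(\Om)$) so that we may insert truncated powers of $u$. Write the equation as $-\De u=\la u^{-q}+\Phi(x)\,u^{2^*_\mu-1}$ in $\Om$, where $\Phi(x):=\int_\Om |u(y)|^{2^*_\mu}|x-y|^{-\mu}\,\mathrm{d}y$ is the nonlocal factor. The first step is to record the integrability of $\Phi$: since $u\in H^1_0(\Om)\hookrightarrow L^{2^*}(\Om)$ we have $|u|^{2^*_\mu}\in L^{2n/(2n-\mu)}(\Om)$, and applying Proposition \ref{Hardy-littlewood} to the kernel $|x-y|^{-\mu}$ shows that the map $f\mapsto\int_\Om f(y)|x-y|^{-\mu}\,\mathrm{d}y$ carries $L^{2n/(2n-\mu)}(\Om)$ into $L^{2n/\mu}(\Om)$; hence $\Phi\in L^{2n/\mu}(\Om)$.

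Next I would recast the critical term as a linear potential term. In the main (critical) range $2^*_\mu\ge 2$, set $a(x):=\Phi(x)\,u(x)^{2^*_\mu-2}$; then H\"older's inequality combined with $\Phi\in L^{2n/\mu}(\Om)$ and $u\in L^{2^*}(\Om)$ gives $a\in L^{n/2}(\Om)$, the borderline exponent that is the heart of the matter. For $\beta\ge 1$ and $L>0$ put $u_L:=\min\{u,L\}$ and test the identity of Lemma \ref{reg} with $\varphi:=u\,u_L^{2(\beta-1)}\in H^1_0(\Om)$. The gradient term dominates $c\,\|u\,u_L^{\beta-1}\|_{L^{2^*}}^2$ by the Sobolev inequality, while the critical contribution equals $\int_\Om a\,(u\,u_L^{\beta-1})^2\,\mathrm{d}x$.

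The crucial manoeuvre is the Brezis--Kato splitting $a=a\,\chi_{\{|a|>A\}}+a\,\chi_{\{|a|\le A\}}$: since $a\in L^{n/2}(\Om)$, absolute continuity makes $\int_{\{|a|>t\}}|a|^{n/2}\,\mathrm{d}x\to 0$ as $t\to\infty$, so one fixes $A$ so large that the H\"older estimate (with exponents $n/2$ and $n/(n-2)$) of the high part can be absorbed into the Sobolev term on the left, while the low part is bounded by $A\,\|u\,u_L^{\beta-1}\|_{L^2}^2\le A\int_\Om u^{2\beta}\,\mathrm{d}x$. The singular term is only a lower--order perturbation: since $0<1-q<1$ and $u_L\le u$, its contribution $\la\int_\Om u^{1-q}u_L^{2(\beta-1)}\,\mathrm{d}x\le \la\int_\Om(1+u)\,u^{2(\beta-1)}\,\mathrm{d}x$ is controlled by $\int_\Om u^{2\beta}\,\mathrm{d}x$. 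All these estimates are uniform in $L$, so letting $L\to\infty$ by Fatou's lemma yields the implication $u\in L^{2\beta}(\Om)\Rightarrow u\in L^{2^*\beta}(\Om)$. Starting from $\beta=1$ and iterating, each step multiplies the integrability exponent by the fixed factor $2^*/2>1$, so after finitely many steps one reaches every prescribed $p<\infty$; the finiteness of $\Om$ then covers the small values of $p$, giving $u\in L^p(\Om)$ for all $p\in[1,\infty)$.

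The main obstacle is precisely the borderline membership $a\in L^{n/2}(\Om)$, forced by the interplay of critical Hardy--Littlewood--Sobolev growth with the nonlocal factor $\Phi$: a naive bootstrap stalls at this exponent, and it is the truncation/small--set splitting carried out uniformly in the parameter $L$ that lets the iteration gain integrability at all. In the complementary range $2^*_\mu<2$ the nonlinearity $u^{2^*_\mu-1}$ is sublinear, and a direct bootstrap---feeding $g=\la u^{-q}+\Phi u^{2^*_\mu-1}\in L^s(\Om)$ (via $\Phi\in L^{2n/\mu}$ and H\"older) into the Calder\'on--Zygmund estimate for $-\De u=g$---improves the exponent at a definite rate without any splitting, so that case is easier.
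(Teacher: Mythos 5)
Your Brezis--Kato strategy is genuinely different from the paper's: the paper does not work with $\Phi\in L^{2n/\mu}(\Om)$ at all, but instead quotes, from the proof of Lemma 6.1 of \cite{myang}, the much stronger fact \eqref{chobdd} that $\Phi(x)=\int_\Om |u(y)|^{2^*_\mu}|x-y|^{-\mu}\,\mathrm{d}y$ is in $L^\infty(\Om)$ for a weak solution; after that the Choquard term is a local term $M u^{2^*_\mu-1}$ with \emph{subcritical} exponent ($2^*_\mu-1<2^*-1$), and the paper runs a truncated Moser iteration with $\psi=\min\{u^{\beta-1},K\}$ and a splitting over $\{u\le R\}$, $\{u>R\}$. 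In the range $2^*_\mu\ge 2$, i.e. $\mu\le 4$, your argument is correct and in fact more self-contained: the H\"older computation $\frac{\mu}{2n}+\frac{2^*_\mu-2}{2^*}=\frac{\mu}{2n}+\frac{4-\mu}{2n}=\frac{2}{n}$ does give $a=\Phi u^{2^*_\mu-2}\in L^{n/2}(\Om)$, and the high/low splitting of $a$, uniform in the truncation level $L$, closes the iteration without any input from \cite{myang}.

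The gap is the complementary case $\mu\in(4,n)$ (nonempty for $n\ge 5$), which the statement covers since it assumes only $0<\mu<n$. There $2^*_\mu<2$, so $a$ involves a negative power of $u$ and the $L^{n/2}$ bound is lost, and your fallback fails on two counts. First, the ``direct bootstrap'' gains nothing even at the first step: from $u\in L^{2^*}$ one gets $u^{2^*_\mu-1}\in L^{2n/(n+2-\mu)}$, hence $\Phi u^{2^*_\mu-1}\in L^{s}$ with $\frac1s=\frac{\mu}{2n}+\frac{n+2-\mu}{2n}=\frac{n+2}{2n}$, and $W^{2,2n/(n+2)}\hookrightarrow L^{2^*}$ exactly --- no improvement. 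The sublinearity of the local factor is precisely offset by the weak integrability of $\Phi$, because the \emph{total} growth $22^*_\mu-1$ is critical; moreover the obstruction is robust, since any H\"older or interpolation pairing of $\Phi\in L^{2n/\mu}$ against the Sobolev-controlled quantity $\|u\,u_L^{\beta-1}\|_{2^*}^2$ requires exactly $\mu\le 4$. Second, you cannot feed $\la u^{-q}$ into a Calder\'on--Zygmund estimate at this stage: a weak solution is not yet known to satisfy $u\ge c\,\delta$ (that is Theorem \ref{bdabvdist}, proved later), and even granting it one only gets $u^{-q}\in L^{s}$ for $s<1/q$, which caps the bootstrap for $q$ close to $1$ (this is why the paper's $C^{1+\alpha}$ result needs $q<1/n$). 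The paper's appeal to \eqref{chobdd} is exactly what removes both obstacles: with $\Phi\in L^\infty$ the iteration is uniform in $\mu\in(0,n)$, and the singular term never needs to be an $L^s$ datum --- it enters the test-function inequality only through $\la u^{1-q}\psi^2\le \la u^{2\beta-1-q}$, controlled by the induction hypothesis. To repair your proof you must either treat $\mu>4$ by a different decomposition or, as the paper does, invoke the $L^\infty$ bound on the Riesz potential term.
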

\begin{proof}
From proof of Lemma $6.1$ of \cite{myang}, we have
\begin{equation}\label{chobdd}
\int_\Om \frac{|u(y)|^{2^*_\mu}}{|x-y|^\mu}~\mathrm{d}y \in L^\infty(\Om).
\end{equation}
We claim that $u \in L^{2\beta}(\Om)$ implies $u \in L^{2^*\beta}(\Om)$, for each $\beta \in [1,\infty)$. So, let us assume $u \in L^{2\beta}(\Om)$ with $
\beta \in [1,\infty)$. Let $K>0$ and set \textcolor{red}{$\psi = \min\{u^{\beta-1},K\}$}. Then, $u\psi,\; u\psi^2 \in H^1_0(\Om)$. Using Lemma \ref{reg}, \textcolor{red}{ there exist a constant $M>0$ such that for each $R>0$,} we get
\begin{equation*}
\begin{split}
\int_{\Om}|\nabla u\psi|^2~\mathrm{d}x  & \leq \beta \int_\Om \nabla u \nabla (u\psi^2)~ \mathrm{d}x\\
 &= \beta  \left(\la\int_\Om u^{-q}u\psi^2~\mathrm{d}x + \int_\Om\int_\Om \frac{|u(y)|^{2^*_\mu}|u(x)|^{2^*_\mu-2}u(x)(u\psi^2)(x)}{|x-y|^\mu}~\mathrm{d}y\mathrm{d}x \right)\\
 &\leq  \beta \left( \int_\Om (\la u^{-q}u\psi^2+ M u^{2^*_\mu-2}u^2\psi^2 )~\mathrm{d}x\right)\\
 & \leq \beta\la\int_\Om u^{2\beta-1+q}~\mathrm{d}x + \beta M\left(\int_{u\leq R} u^{2\beta-2+2^*_\mu}~\mathrm{d}x + \int_{u>R} u^{2^*_\mu-2}u^2\psi^2\right)\\
 & \leq k_1+\beta\textcolor{red}{M} |\Om|R^{2\beta-2+2^*_\mu}+ k_2 \left(\int_{u>R} u^{\frac{n(2^*_\mu-2)}{2}}~\mathrm{d}x \right)^{\frac{2}{n}} \left( \int_{u>K}(u^2\psi^2)^{\frac{n}{n-2}}~\mathrm{d}x \right)^{\textcolor{red}{\frac{n-2}{2}}}\\
& \leq k_1+\beta\textcolor{red}{M} |\Om|R^{2\beta-2+2^*_\mu}+ k_3 \left(\int_{u>R} u^{\frac{n(2^*_\mu-2)}{2}}~\mathrm{d}x \right)^{\frac{2}{n}} \left( \int_\Om |\nabla(u\psi)|^2 ~\mathrm{d}x \right),
\end{split}
\end{equation*}
where $k_1,\;k_2$ and $k_3$ are positive constants independent of both $K$ and $R$. We can appropriately chose $R$ such that
\[\textcolor{red}{k_3\left(\int_{u>R} u^{\frac{n(2^*_\mu-2)}{2}}~\mathrm{d}x \right)^{\frac{2}{n}} \leq \frac12}.\]
Then we get
\[\int_{u^{\beta-1}\leq K}|\nabla u^\beta|^2~\mathrm{d}x \leq \int_\Om |\nabla(u\psi)|^2~\mathrm{d}x \leq 2(k_1+\beta\textcolor{red}{M} |\Om|R^{2\beta-2+2^*_\mu} ).\]
Now, letting $K \rightarrow \infty$, we get
\[\int_\Om|\nabla u^\beta|^2~\mathrm{d}x \leq 2(k_1+\beta |\Om|R^{2\beta-2+2^*_\mu} ).\]
This implies $u^\beta \in H^1_0(\Om)$ and therefore by Sobolev embedding theorem, we get $  u \in L^{2^*\beta}(\Om)$. Finally, using an inductive argument, we can say $u \in L^p(\Om)$, for each $1\leq p< \infty$.\QED
\end{proof}
\begin{Lemma}\label{L-infty}
Each positive weak solution of $(P_\la)$ belongs to $L^\infty(\Om)$.
\end{Lemma}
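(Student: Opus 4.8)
The plan is to tame the singular term by truncation and then invoke the Stampacchia-type bound of Theorem \ref{bddbel}. Let $u$ be a positive weak solution. By Lemma \ref{reg} the identity
\[ \int_\Om \na u \na w~\mathrm{d}x = \la \int_\Om u^{-q} w~\mathrm{d}x + \int_\Om\int_\Om \frac{|u(y)|^{2^*_\mu}|u(x)|^{2^*_\mu-2}u(x)w(x)}{|x-y|^\mu}~\mathrm{d}y\mathrm{d}x \]
holds for all $w \in H^1_0(\Om)$, so that $-\De u = \la u^{-q} + h\,u^{2^*_\mu-1}$ weakly, where by \eqref{chobdd} the coefficient $h(x) := \int_\Om |u(y)|^{2^*_\mu}|x-y|^{-\mu}~\mathrm{d}y$ satisfies $C_0 := \|h\|_{\infty} < \infty$. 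Moreover, by the previous lemma $u \in L^p(\Om)$ for every $p \in [1,\infty)$.

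First I would note that Theorem \ref{bddbel} cannot be applied to the full right hand side $g = \la u^{-q} + h u^{2^*_\mu-1}$: the critical part $h u^{2^*_\mu-1}$ lies in $L^{\al/2}(\Om)$ for every finite $\al$ (since $h \in L^\infty$ and $u \in L^p$ for all $p$), but the singular part $\la u^{-q}$ need not be in $L^{\al/2}$ with $\al > n$, as $u$ may decay like $\de$ near $\pa\Om$. The decisive observation is that for an \emph{upper} bound the singularity is harmless, because $u^{-q} \leq 1$ on the super-level set $\{u > 1\}$. Accordingly I set $g_* := \la + C_0\, u^{2^*_\mu-1}$, which is nonnegative and, since $u \in L^p(\Om)$ for all finite $p$, belongs to $L^{\al/2}(\Om)$ for some (indeed every) $\al > n$.

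Next I would show that $w := (u-1)^+ \in H^1_0(\Om)$ is a distributional subsolution, $-\De w \leq g_*$ in $\Om$. To this end, for $0 \le \psi \in C^\infty_c(\Om)$ test the weak identity with $\psi\,\theta(u)$, where $\theta$ is a smooth nondecreasing cutoff with $\theta \equiv 0$ on $(-\infty,1]$ and $\theta \equiv 1$ on $[1+\eta,\infty)$. Since $\int_\Om \psi\,\theta'(u)|\na u|^2~\mathrm{d}x \ge 0$ may be discarded, and on the support of $\theta(u)$ one has $u > 1$, hence $u^{-q} \le 1$ and $h \le C_0$, letting $\eta \to 0^+$ yields $\int_\Om \na w \na \psi~\mathrm{d}x \le \int_\Om g_* \psi~\mathrm{d}x$. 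Theorem \ref{bddbel} then gives that $w$, and therefore $u = w + \min\{u,1\}$, is essentially bounded from above; since $u \ge 0$, this proves $u \in L^\infty(\Om)$.

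The main obstacle is precisely the singular nonlinearity, which blocks a direct application of Theorem \ref{bddbel} to the equation; the truncation at height $1$ (converting $\la u^{-q}$ into the bounded constant $\la$ on the relevant super-level sets) together with the integrability $u \in L^p(\Om)$ for all $p<\infty$ and the boundedness \eqref{chobdd} of the Choquard coefficient is exactly what makes the subsolution estimate available.
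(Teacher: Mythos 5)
Your proof is correct and takes essentially the same route as the paper: both pass to $(u-1)^+$, replace $\la u^{-q}$ by $\la$ on the super-level set $\{u>1\}$, bound the Choquard coefficient using \eqref{chobdd}, and apply Theorem \ref{bddbel} with the integrability $u \in L^p(\Om)$ for all finite $p$ from the preceding lemma. The only difference is cosmetic: you justify the distributional inequality $-\De (u-1)^+ \leq \la + C_0 u^{2^*_\mu - 1}$ carefully via the cutoff $\theta$ and Lemma \ref{reg}, a step the paper simply asserts.
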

\begin{proof}
Let $u$ be a positive weak solution of $(P_\la)$. Then, for each $0 \leq \psi \in C^\infty_c(\Om)$ we have
\begin{equation*}
\begin{split}
\int_\Om \nabla(u-1)^+\nabla \psi~\mathrm{d}x &\leq \int_\Om \left( \la  + \left( \int_\Om \frac{|u(y)|^{2^*_\mu}}{|x-y|^\mu}~\mathrm{d}y\right)|u(x)|^{2^*_\mu-2}u(x)\right)\psi(x)~\mathrm{d}x\\
& \leq \int_\Om \left( \la  + M|u(x)|^{2^*_\mu-2}u(x)\right)\psi(x)~\mathrm{d}x,
\end{split}
\end{equation*}
where $M>0 $ is positive constant. Since $2^*_\mu -1 > n/2$, we use Theorem \ref{bddbel} to conclude that $(u-1)$ is bounded from above. Therefore, $u \in L^\infty(\Om)$. \QED
\end{proof}
\noi\textcolor{red}{{\bf Proof of Theorem \ref{mainthrm2}:} Now the proof of Theorem \ref{mainthrm2} follows from Proposition \ref{prp4.2}, Proposition \ref{prp5.4} and Lemma \ref{L-infty}.\QED}

\noi Before proving our next result, let us recall Proposition $3$ and Lemma A.2 of \cite{hirano1}. They show that the sufficient condition for the assumptions in Theorem \ref{distrel} are satisfied. We denote $B(x,r)$ the open ball of radius $r>0$ centered at $x \in \mb R^n$.
\begin{Proposition}
Assume that there exists $R>0$ such that for each $x \in \partial \Om$, there is $y \in \mb R^n$ with $|x-y|=R$ and $B(y,R)\cap \Om = \emptyset$. Then $\De \delta \leq (n-1)/R$. In particular, if $\Om$ is convex, then $\De \delta \leq 0$.
\end{Proposition}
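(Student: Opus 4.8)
The plan is to establish the pointwise bound $\De\delta(x)\le (n-1)/R$ at every point $x\in\Om$ where $\delta$ is twice differentiable, and then upgrade it to the distributional statement by invoking semiconcavity of $\delta$. Recall that for $x\in\Om$ one has $\delta(x)=\text{dist}(x,\mb R^n\setminus\Om)$, the distance to the closed set $\Om^c$; such a distance function is locally semiconcave on the open set $\{\delta>0\}=\Om$, hence (Alexandrov) twice differentiable almost everywhere, and its distributional Laplacian is a Radon measure whose singular part is nonpositive. Consequently it suffices to prove the bound at a.e. point and read it off from the a.e. second derivatives.

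First I would fix a point $x_0\in\Om$ at which $\delta$ is twice differentiable and the nearest boundary point $\bar x\in\partial\Om$ is unique, and set $t=\delta(x_0)=|x_0-\bar x|$. The open ball $B(x_0,t)$ lies in $\Om$ (any point within $t$ of $x_0$ is closer to $x_0$ than $\text{dist}(x_0,\Om^c)$, so cannot lie in $\Om^c$), while the hypothesis furnishes an exterior ball $B(y,R)$ with $|\bar x-y|=R$ and $B(y,R)\cap\Om=\emptyset$. Since $B(x_0,t)\subset\Om$ and $B(y,R)$ is disjoint from $\Om$, these open balls are disjoint, so $|x_0-y|\ge t+R$; on the other hand the triangle inequality gives $|x_0-y|\le|x_0-\bar x|+|\bar x-y|=t+R$. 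Hence equality holds, $\bar x$ lies on the segment $[x_0,y]$, and $|x_0-y|=t+R$.

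Next comes the comparison. I define $w(z)=|z-y|-R$ and claim $\delta\le w$ throughout $\Om$ with equality at $x_0$. Indeed $y\notin\Om$, so for any $z\in\Om$ the segment $[z,y]$ meets $\partial\Om$ at some $p$; since every boundary point satisfies $|p-y|\ge R$ (because $\partial\Om\cap B(y,R)=\emptyset$, as boundary points are limits of points of $\Om$), we get $\delta(z)\le|z-p|=|z-y|-|p-y|\le|z-y|-R=w(z)$. At $x_0$ equality holds because $w(x_0)=(t+R)-R=t=\delta(x_0)$. As $w$ is smooth near $x_0$ (where $|x_0-y|=t+R>0$) and $\delta$ is twice differentiable there, $\delta-w$ has an interior maximum at $x_0$, so $D^2(\delta-w)(x_0)\le 0$ and, taking traces, $\De\delta(x_0)\le\De w(x_0)$. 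Using $\De|z-y|=(n-1)/|z-y|$ gives $\De w(x_0)=(n-1)/(t+R)\le(n-1)/R$, the desired bound; combined with the semiconcavity remark it yields $\De\delta\le(n-1)/R$ in the sense of distributions.

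For the convex case I would replace the exterior ball by a supporting hyperplane: convexity provides at each $\bar x\in\partial\Om$ a hyperplane through $\bar x$ with $\Om$ on one side, and comparing $\delta$ with the harmonic affine distance to that hyperplane gives $\De\delta(x_0)\le 0$ (equivalently, let $R\to\infty$). The main obstacle I anticipate is not the geometric comparison, which is elementary, but the passage from the almost-everywhere pointwise inequality to the distributional inequality on all of $\Om$, i.e. controlling the cut locus where $\delta$ fails to be $C^2$. This is exactly what semiconcavity of the distance function resolves, since it forces the singular part of the measure $\De\delta$ to be nonpositive.
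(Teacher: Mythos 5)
The paper never proves this Proposition: it is recalled, without proof, from Proposition 3 and Lemma A.2 of \cite{hirano1}, so there is no internal argument to compare yours against; your proposal has to stand on its own, and it does. The statement must be read distributionally (that is the sense the paper fixes before Theorem \ref{bddbel} and uses in Theorem \ref{distrel}), and your proof correctly targets exactly that. The geometric core is airtight: $B(x_0,t)\subset\Om$ and $B(y,R)\cap\Om=\emptyset$ force $|x_0-y|\ge t+R$, the triangle inequality gives the reverse, so the barrier $w(z)=|z-y|-R$ satisfies $\delta\le w$ on $\Om$ (via the segment $[z,y]$ crossing $\partial\Om$ outside $B(y,R)$) with equality at $x_0$, whence $\De\delta(x_0)\le\De w(x_0)=(n-1)/(t+R)\le (n-1)/R$ at any point of twice differentiability. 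The genuinely nontrivial step is the upgrade from this a.e.\ inequality to the distributional one, and you correctly isolate it rather than glossing over it: it rests on two standard facts about semiconcave functions, namely that $\delta=\mathrm{dist}(\cdot,\Om^c)$ is locally semiconcave on $\Om$ (an infimum of the functions $x\mapsto|x-y|$, whose Hessians are locally uniformly bounded above away from $\Om^c$) and hence twice differentiable a.e.\ by Alexandrov's theorem, and that the distributional Hessian of a semiconcave function is a matrix-valued measure whose absolutely continuous part is the Alexandrov Hessian and whose singular part is nonpositive. Both are classical but should be cited (e.g.\ Evans--Gariepy or Cannarsa--Sinestrari) since they carry the whole weight of the distributional conclusion. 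Two minor remarks: the uniqueness of the nearest boundary point need not be assumed separately, since differentiability of $\delta$ at $x_0$ already forces it; and in the convex case the cleanest route is the one you mention parenthetically --- the hypothesis then holds for every $R>0$, so letting $R\to\infty$ gives $\De\delta\le 0$ without redoing the touching argument with a supporting hyperplane.
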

\begin{Lemma}\label{dist1}
The function $\delta$ is Fr$\acute{e}$chet differentiable almost everywhere in $\Om$ and $|\nabla \delta|=1$ at which $\delta$ is Fr$\acute{e}$chet differentiable. Moreover, the first order derivatives of $\delta$ in the sense of distributions and those in classical sense coincide.
\end{Lemma}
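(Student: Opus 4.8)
The plan is to reduce all three assertions to standard facts about the Euclidean distance function, proving them in the order: Lipschitz continuity, a.e.\ Fr\'echet differentiability (Rademacher), the eikonal identity $|\na\delta|=1$, and finally the coincidence of the distributional and classical derivatives.

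First I would record that $\delta$ is $1$-Lipschitz on $\Om$: for $x,x'\in\Om$ and any $y\in\pa\Om$, the triangle inequality gives $\delta(x)\le |x-y|\le |x-x'|+|x'-y|$, and taking the infimum over $y\in\pa\Om$ yields $\delta(x)-\delta(x')\le|x-x'|$; interchanging $x$ and $x'$ gives $|\delta(x)-\delta(x')|\le |x-x'|$. Since a Lipschitz function lies in $W^{1,\infty}(\Om)$, Rademacher's theorem immediately provides that $\delta$ is Fr\'echet differentiable at almost every point of $\Om$, and at every such point the Lipschitz bound forces $|\na\delta(x)|\le 1$.

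Next I would establish the matching lower bound $|\na\delta(x)|=1$ at each differentiability point $x$. As $\pa\Om$ is compact, $x$ admits at least one nearest boundary point $p$, and I set $v=(x-p)/|x-p|$, which is well defined since $\delta(x)=|x-p|>0$ for $x\in\Om$. The key elementary observation is that $\delta$ grows at unit rate along the minimizing segment: for $0\le s\le\delta(x)$ one has $\delta(p+sv)=s$. The upper bound is clear from $p\in\pa\Om$; for the lower bound, a point $q\in\pa\Om$ with $|p+sv-q|<s$ would give $|x-q|\le|x-(p+sv)|+|p+sv-q|<(\delta(x)-s)+s=\delta(x)$, contradicting that $p$ realizes $\delta(x)$. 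Writing $s=\delta(x)-\tau$ shows $\delta(x-\tau v)=\delta(x)-\tau$ for small $\tau\ge0$, so the difference quotient $(\delta(x-\tau v)-\delta(x))/(-\tau)\to 1$ as $\tau\downarrow0$. Because $\delta$ is differentiable at $x$, this limit equals $\na\delta(x)\cdot v$, hence $\na\delta(x)\cdot v=1$; combined with $|\na\delta(x)|\le1$ and the Cauchy-Schwarz inequality, $1=\na\delta(x)\cdot v\le|\na\delta(x)|\le1$, whence $|\na\delta(x)|=1$.

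Finally, for the coincidence of the distributional and classical first-order derivatives I would invoke the absolute-continuity-on-lines characterization of Sobolev functions: a Lipschitz $\delta$ is absolutely continuous on every segment parallel to a coordinate axis, so Fubini's theorem together with integration by parts on lines gives $\int_\Om \delta\,\pa_i\varphi\,\mathrm{d}x=-\int_\Om (\pa_i\delta)\,\varphi\,\mathrm{d}x$ for all $\varphi\in C_c^\infty(\Om)$, where $\pa_i\delta$ is the pointwise a.e.\ partial derivative supplied by Rademacher. Equivalently, one may mollify and pass to the limit using $\na(\delta*\rho_\e)=(\na\delta)*\rho_\e\to\na\delta$ in $L^1_{\mathrm{loc}}$. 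Either route identifies the two notions of derivative. The only genuinely delicate step is the eikonal identity: care is needed because nearest boundary points need not be unique, but the argument above circumvents this by using only the existence of a single nearest point together with the assumed differentiability, rather than any description of the singular set where $\delta$ fails to be differentiable.
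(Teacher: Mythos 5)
Your proof is correct and complete. Note, however, that the paper itself contains no proof of this statement: Lemma \ref{dist1} is recalled verbatim as Lemma A.2 of Hirano--Saccon--Shioji \cite{hirano1}, so there is no in-paper argument to compare against; what you have written is a self-contained justification of the cited fact. Your route is the standard one, and each step holds up: the triangle inequality gives the $1$-Lipschitz bound, Rademacher's theorem gives a.e.\ Fr\'echet differentiability with $|\na\delta|\le 1$, the unit-speed growth $\delta(p+sv)=s$ along a minimizing segment yields the directional derivative $\na\delta(x)\cdot v=1$ (the contradiction argument for the lower bound is exactly right), Cauchy--Schwarz then forces $|\na\delta(x)|=1$, and the ACL/Fubini integration-by-parts argument identifies the distributional with the pointwise a.e.\ derivatives.

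Two cosmetic remarks, neither affecting correctness. First, $\delta$ is only defined on $\Om$ in the paper, so in the identity $\delta(p+sv)=s$ you should either restrict to $0<s\le\delta(x)$ (such points lie in $B(x,\delta(x))\subset\Om$) or tacitly extend $\delta$ to $\mb R^n$ as the distance to $\pa\Om$; since the difference quotient only uses $\tau$ near $0$, nothing breaks. Second, the mollification alternative as you state it, $\na(\delta*\rho_\e)=(\na\delta)*\rho_\e$ with $\na\delta$ the classical a.e.\ gradient, presupposes exactly the distributional-equals-classical identity being proved (or needs a separate difference-quotient and dominated-convergence argument, using that the quotients are bounded by $1$); so the ACL route should be regarded as your actual proof, with the mollification comment as a dispensable aside.
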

Next we need  the following to show the regularity upto the boundary. We follow \cite{hirano1}.
\begin{Theorem}\label{distrel}
Let us assume that there exist $a\textcolor{red}{>} 0$, $R \geq 0$ and $q \leq s <1$ such that
\[\De \delta \leq R\delta^{-s} \; \text{in} \; \Om_a,\]
where $\delta$ is defined in section 2.
Then there exist $K>0$ such that $u \leq K\delta$ in $\Om$, where $u$ is a positive weak solution of $(P_\la)$.
\end{Theorem}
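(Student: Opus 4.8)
The plan is to reduce the global bound to a comparison argument near $\pa\Om$. Since $u$ is a positive weak solution, Lemma \ref{L-infty} gives $u\in L^\infty(\Om)$, say $0<u\le M$, and \eqref{chobdd} together with $u\in L^\infty(\Om)$ shows that the Choquard term
\[ h(x):=\left(\int_\Om \frac{|u(y)|^{2^*_\mu}}{|x-y|^\mu}\,\mathrm{d}y\right)|u(x)|^{2^*_\mu-2}u(x) \]
is nonnegative and bounded, $0\le h\le C$ for some $C>0$. Hence, by Lemma \ref{reg}, $u$ is a weak subsolution of $-\De v=\la v^{-q}+C$. I would compare $u$ near $\pa\Om$ with an explicit supersolution built from $\delta$, using the hypothesis $\De\delta\le R\delta^{-s}$, and then invoke the trivial interior bound $u\le M\le (M/a')\delta$ on $\{\delta\ge a'\}$.

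For the barrier I take, for constants $K,B>0$ to be fixed and exponent $\beta:=2-s\in(1,2)$,
\[ \bar u:=f(\delta),\qquad f(t)=Kt-Bt^{\beta}, \]
on a neighbourhood $\Om_{a'}$ of $\pa\Om$ with $a'\le a$ small. Using $|\na\delta|=1$ a.e. (Lemma \ref{dist1}), one computes distributionally
\[ -\De\bar u=-f''(\delta)-f'(\delta)\De\delta=B(2-s)(1-s)\delta^{-s}-f'(\delta)\De\delta. \]
For $a'$ small one has $f'(\delta)=K-B(2-s)\delta^{1-s}\in[0,K]$, so $\De\delta\le R\delta^{-s}$ gives $-f'(\delta)\De\delta\ge -KR\delta^{-s}$ and hence $-\De\bar u\ge\big(B(2-s)(1-s)-KR\big)\delta^{-s}$. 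Choosing $B=c_1K$ with $c_1>R/[(2-s)(1-s)]$ makes the coefficient equal to $K\kappa$ with $\kappa>0$. On the other hand $\bar u=K\delta\big(1-c_1\delta^{1-s}\big)\ge K\delta/2$ for $a'$ small, so $\bar u^{-q}\le 2K^{-q}\delta^{-q}$; since $q\le s$ forces $\delta^{-q}\le\delta^{-s}$ for $\delta\le1$, the supersolution inequality $-\De\bar u\ge\la\bar u^{-q}+C$ reduces, after factoring $\delta^{-s}$ and using $\delta^{s-q},\delta^{s}\le1$, to $K\kappa\ge 2\la K^{-q}+C$, which holds once $K$ is large. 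This is exactly where the standing hypotheses enter: $s<1$ makes $\beta>1$, so $-f''$ produces a genuinely singular positive term $\delta^{-s}$ dominating the curvature error, while $q\le s$ ensures this same term dominates the singular source $\delta^{-q}$.

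The comparison is then driven by the monotonicity of the singular term. Shrinking $a'$ and enlarging $K$ I arrange $\bar u\ge M\ge u$ on $\{\delta=a'\}$, while $\bar u=0=u$ on $\pa\Om$, so $(u-\bar u)^+\in H^1_0(\Om_{a'})$ after extension by zero. Testing the subsolution inequality for $u$ against $(u-\bar u)^+$ and subtracting the supersolution inequality for $\bar u$ yields
\[ \int_{\Om_{a'}}|\na(u-\bar u)^+|^2\,\mathrm{d}x\le\la\int_{\Om_{a'}}\big(u^{-q}-\bar u^{-q}\big)(u-\bar u)^+\,\mathrm{d}x\le0, \]
since $t\mapsto t^{-q}$ is decreasing, so on $\{u>\bar u\}$ the integrand is nonpositive. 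Thus $(u-\bar u)^+\equiv0$, i.e. $u\le\bar u\le K\delta$ in $\Om_{a'}$; combined with $u\le M\le (M/a')\delta$ on $\Om\setminus\Om_{a'}$ this gives $u\le K\delta$ in $\Om$ after replacing $K$ by $\max\{K,M/a'\}$.

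I expect the main obstacle to be making the barrier argument rigorous at the level of weak/distributional solutions: $\delta$ is merely Lipschitz, so the identity for $-\De\bar u$ must be read distributionally through Lemma \ref{dist1} and the curvature hypothesis, and the singular products $u^{-q}(u-\bar u)^+$ and $\bar u^{-q}(u-\bar u)^+$ must be shown integrable, the latter because $\bar u^{-q}\sim\delta^{-q}\in L^1(\Om)$ for $q<1$ while $(u-\bar u)^+\le M$. The other delicate point is the joint choice of $a'$, $B=c_1K$ and $K$: $B$ must scale linearly with $K$ so that the neighbourhood $\Om_{a'}$ on which $\bar u$ is a supersolution does not shrink as $K\to\infty$, which is precisely what lets the inner boundary condition $\bar u\ge M$ and the supersolution inequality hold simultaneously.
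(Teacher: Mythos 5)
Your proposal is correct, and its core coincides with the paper's own proof: your barrier $f(\delta)=K\delta-B\delta^{2-s}$ is, up to the scaling $K=2\bar h\alpha$, $B=\bar h\alpha^{2-s}$, exactly the paper's $\varrho(\delta)=\bar h\bigl(2\alpha\delta-(\alpha\delta)^{2-s}\bigr)$; you use the same distributional reading of $-\De f(\delta)\ge\bigl(B(2-s)(1-s)-KR\bigr)\delta^{-s}$, obtained by feeding $f'(\delta)\psi\ge 0$ into the hypothesis $\De\delta\le R\delta^{-s}$, and the same role of $q\le s<1$ in letting this singular term dominate $\la \bar u^{-q}+C$. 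Where you genuinely diverge is the comparison step. The paper caps its barrier at the constant $\bar h>\|u\|_\infty$ away from $\pa\Om$, so the comparison is global in $\Om$ and no inner boundary ever appears; moreover, instead of testing with $(u-\varrho(\delta))^+$ directly, it tests with $(u_k-\varrho(\delta))^+$, where $u_k=fu+(1-f)w_k$ is built from the compactly supported approximations $w_k$ of Lemma \ref{lem2.1}. All test functions then have compact support in $\Om$, so the singular weights $\delta^{-s}$ and $\varrho(\delta)^{-q}$ are bounded on their supports, and no trace theory or weighted integrability is needed; the conclusion follows by letting $k\to\infty$ and using $u^{-q}\le\varrho(\delta)^{-q}$ on $\{u_k>\varrho(\delta)\}$. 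You instead run a standard weak comparison on the collar $\Om_{a'}$ with $(u-\bar u)^+$ itself, matching the inner boundary via $K\ge 2M/a'$ (with $B=c_1K$ so that $a'$ does not shrink as $K$ grows, a point you correctly identify as essential). This is legitimate but carries two technical debts that the paper's construction is designed to avoid: (i) you must justify $(u-\bar u)^+\in H^1_0(\Om_{a'})$, e.g.\ via the ordering $0\le(u-\bar u)^+\le u\in H^1_0(\Om)$, or by arranging $\bar u\ge 2M$ on a full inner collar so the support stays away from $\{\delta=a'\}$; and (ii) your supersolution inequality is a priori tested only against nonnegative $C_c^\infty(\Om_{a'})$ functions, so it must be extended to nonnegative $H^1_0(\Om_{a'})$ test functions, which works because $\bar u^{-q}\le 2K^{-q}\delta^{-q}$ and $\delta^{-s}$ pair with $H^1_0$ functions by Hardy's inequality (here $q\le s<1$ is used again). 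You flag both points and both are fillable, so your argument goes through; in exchange, your route is the more standard textbook comparison, while the paper's trades these issues for the auxiliary sequence of Lemma \ref{lem2.1}.
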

\begin{proof}
Let $u$ be a positive weak solution of $(P_\la)$. Using $u>0$ and Lemma \ref{L-infty}, for each $x \in \Om$, we get
\begin{equation}\label{distrel1}
\begin{split}
\la u^{-q}(x)+ \int_\Om \frac{|u(y)|^{2^*_\mu}}{|x-y|^\mu}~\mathrm{d}y |u(x)|^{2^*_\mu-2}u(x) \leq \la u^{-q}(x)+k_1 \leq k_2  u^{-q}(x),
\end{split}
\end{equation}
for some positive constant $k_1$ and $k_2$. We choose $\bar h \in \mb R^n$ such that $\bar h > \|u\|_\infty$. Let us define
$$\varrho_0(t) =
\left\{
    \begin{array}{ll}
    \bar h(2t-t^{2-s}) & \mbox{if} \; \textcolor{red}{t}\in[0,1] \\
    \bar h & \mbox{if} \; \textcolor{red}{t} \geq 1.
    \end{array}
\right.$$
Then for each $0<t<1$, we get
\[ \varrho_0^{\prime}(t)= \bar h (2- (2-s)t^{1-s})>0 \; \text{and}\; \varrho_0^{\prime\prime}(t)= -\bar h (2-s)(1-s)t^{-s}<0.\]
We choose $\alpha > 1/a$ and set $\varrho(t)= \varrho_0(\alpha t)$, for $t > 0$. Clearly $\varrho^\prime(t)= \alpha \varrho_0^\prime(\alpha t)$ and $\varrho^{\prime\prime}(t)= \alpha^2 \varrho_0^{\prime\prime}(\alpha t)$ for each $0 < t< 1/\alpha$. We claim that
\[\De (\varrho(\delta)) \leq \varrho^{\prime \prime}(\delta)+ R\varrho^\prime (\delta)\delta^{-s}\; \text{in}\; \Om_{1/\alpha}. \]
Let $0\leq \psi \in C_c^\infty(\Om)$ such that $\text{supp}\; \psi \in \Om_{1/\alpha}$. It is already known that $\varrho^\prime(\delta)\psi \geq 0$, $\varrho(\delta) \in H^1_{\text{loc}}(\Om)$, $\varrho^\prime(\delta)\psi \in H^1_0(\Om)$, $\nabla (\varrho(\delta)) = \varrho^\prime(\delta)\nabla \delta$ and $\nabla (\varrho^\prime(\delta)\psi)= \varrho^{\prime\prime}(d)\psi\nabla \delta+ \varrho^\prime(\delta)\nabla \psi$ in the sense of distributions, and $|\nabla \delta|=1$ almost everywhere, by Lemma \ref{dist1}. Since we assumed $\De d \leq Rd^{-s}$ in $\Om_a$, we have
\[- \int_\Om \nabla \delta \nabla (\varrho^\prime(\delta)\psi)~\mathrm{d}x \leq \int_\Om Rd^{-s} (\varrho^\prime(\delta)\psi)~\mathrm{d}x. \]
Therefore, we get
\begin{equation*}
\begin{split}
-\int_\Om \nabla(\varrho(\delta))\nabla \psi \mathrm{d}x = - \int_\Om \varrho^\prime(\delta)\nabla \delta \nabla \psi~\mathrm{d}x&= - \int_\Om \nabla \delta \nabla(\varrho^\prime(d)\psi)~\mathrm{d}x + \int_\Om \varrho^{\prime \prime}(\delta)|\nabla \delta|^2\psi ~\mathrm{d}x\\
& \leq \int_{\Om}  (R\delta^{-s}\varrho^\prime(\delta)+ \varrho^{\prime \prime}(\delta))\psi~\mathrm{d}x.
\end{split}
\end{equation*}
This proves our claim. Using this, now we have
\begin{align*}
-\De (\varrho(\delta))- k_2 (\varrho(\delta))^{-q} & \geq  -\alpha^2\varrho_0^{\prime \prime}(\alpha \delta)- \alpha \varrho_0^{\prime}(\alpha\delta)R\delta^{-s}- k_2 (\varrho_0(\alpha\delta))^{-q}\\
&\geq {(\alpha\delta)}^{-s}(\alpha^2(2-s)(1-s)\bar h- 2\alpha^{1+s}R\bar h -k_2(2\bar h)^{-q})
\end{align*}
in $\Om_{1/\alpha}$. We can fix large $\alpha > 1/a$ such that
\begin{equation}\label{distrel2}
-\De (\varrho(\delta))- k_2 (\varrho(\delta))^{-q} \geq 0 \; \text{ in }\; \Om_{1/\alpha}.
\end{equation}
Next, we will show that $u \leq \varrho(\delta)$. From Lemma \ref{lem2.1}, we get a sequence $\{w_k\} \subset H^1_0(\Om)$ such that  each $w_k$ has compact support in $\Om$, $0\leq w_1\leq w_2\leq \ldots$ and $\{w_k\}$ converges strongly to $u$ in $H^1_0(\Om)$. Suppose $f: \Om \rightarrow [0,1]$ be a $C^\infty$ function such that $f=1$ in $\Om \setminus \Om_{1/\alpha}$ and $f=0$ in $\Om_{1/2\alpha}$. For each $k$, setting $u_k= fu+(1-f)w_k$, we see that $\varrho(\delta) \in H^1_{\text{loc}}(\Om)$, $(u_k-\varrho(\delta))^+ \in H^1_0(\Om)$, and $\text{supp } (u_k-\varrho(\delta))^+ \subset \Om_{1/\alpha}$. Therefore using Lemma \ref{reg}, \eqref{distrel1} and \eqref{distrel2}, we get
\begin{equation*}
\begin{split}
0 &= \int_\Om \nabla u \nabla(u_k-\varrho(\delta))^+~\mathrm{d}x - \la \int_\Om u^{-q} (u_k-\varrho(\delta))^+~\mathrm{d}x\\
& \quad - \int_\Om \int_\Om\frac{|u(y)|^{2^*_\mu}|u(x)|^{2^*_\mu-2}u(x)(u_k-\varrho(\delta))^+(x)}{|x-y|^\mu}~\mathrm{d}y\mathrm{d}x\\
& = \int_\Om \nabla (u-\varrho(\delta)) \nabla(u_k-\varrho(\delta))^+~\mathrm{d}x + k_2  \int_\Om u^{-q} (u_k-\varrho(\delta))^+~\mathrm{d}x - \la \int_\Om u^{-q} (u_k-\varrho(\delta))^+~\mathrm{d}x\\
& \quad - \int_\Om \int_\Om\frac{|u(y)|^{2^*_\mu}|u(x)|^{2^*_\mu-2}u(x)(u_k-\varrho(\delta))^+(x)}{|x-y|^\mu}~\mathrm{d}y\mathrm{d}x +
\int_\Om \nabla \varrho(\delta) \nabla(u_k-\varrho(\delta))^+~\mathrm{d}x \\
& \quad \quad- k_2\int_\Om u^{-q} (u_k-\varrho(\delta))^+~\mathrm{d}x\\
& \geq  \int_\Om \nabla (u-\varrho(\delta)) \nabla(u_k-\varrho(\delta))^+~\mathrm{d}x\\
& = \int_\Om |\nabla(u_k-\varrho(\delta))^+|^2~\mathrm{d}x+ \int_\Om \nabla (u-u_k) \nabla(u_k-\varrho(\delta))^+~\mathrm{d}x.
\end{split}
\end{equation*}
This implies $\|(u_k-\varrho(\delta))^+\| \leq \|u_k-u\| \rightarrow \infty$ as $k \rightarrow \infty$. Therefore, $u \leq \varrho (\delta)$, since $\{(u_k-\varrho(\delta))^+\}$ converges to $(u-\varrho(\delta))^+$ almost everywhere, as $k \rightarrow \infty$. Using $\varrho(\delta) \leq 2\alpha \bar h \delta$, we obtain the conclusion. \QED
\end{proof}

\noi We need the following result (Theorem 3 in \cite{brenir}) to prove our next result.
\begin{Lemma}\label{Hops}
Let $\Om$ be a bounded domain in $\mb R^n$ with smooth boundary $\partial \Om$. Let $u \in L^1_{\text{loc}}(\Om)$ and assume that for some $k \geq 0$, $u$ satisfies, in the sense of distributions
\[
-\De u + ku \geq 0 \; \text{in} \; \Om,\quad
u \geq 0 \;  \text{in}\; \Om.
\]
Then either $u \equiv 0$, or there exists $\e>0$ such that
$u(x) \geq \e \delta(x), \; x \in \Om.$
\end{Lemma}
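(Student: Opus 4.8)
\noindent The statement is precisely the strong maximum principle combined with a Hopf-type boundary estimate, so my plan is to establish interior positivity via the weak Harnack inequality and then produce the linear lower bound near $\partial\Om$ by a Hopf barrier, the smoothness of $\partial\Om$ supplying the uniform geometric input. Since $u$ is only $L^1_{\text{loc}}$ and the supersolution inequality is distributional, I would first regularise: let $u_\eta=u*\rho_\eta$ be a standard mollification. On any subdomain $\Om'\Subset\Om$ and for $\eta$ small, $u_\eta\geq0$ is smooth and still satisfies $-\De u_\eta+ku_\eta\geq0$ classically. Applying the weak Harnack inequality to the nonnegative supersolution $u_\eta$ on balls, chaining along a connected compact exhaustion, and letting $\eta\to0$, I obtain that if $u\not\equiv0$ then $\operatorname{ess\,inf}_K u>0$ for every compact $K\subset\Om$. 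In particular, for a fixed small $a>0$ there is $m>0$ with $u\geq m$ a.e. on $\{x\in\Om:\delta(x)\geq a\}$.

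\noindent For the boundary estimate I would use that, since $\partial\Om$ is smooth and $\Om$ bounded, $\Om$ satisfies a uniform interior ball condition: there is $R>0$ so that for each $x_0\in\partial\Om$ there is a ball $B_R(y_0)\subset\Om$ with $x_0\in\partial B_R(y_0)$; I fix $a\in(0,R/2]$. Given $x\in\Om_a$, let $x^*$ be its nearest boundary point and $B_R(y^*)$ the corresponding interior ball; then $x$ lies on the segment $[x^*,y^*]$ and $R-|x-y^*|=\delta(x)$. On the annulus $A=B_R(y^*)\setminus\overline{B_{R/2}(y^*)}$ I take the classical comparison function $w(z)=e^{-\al|z-y^*|^2}-e^{-\al R^2}$. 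A direct computation shows that for $\al$ large (depending only on $n,k,R$) one has $-\De w+kw\leq0$ in $A$, while $w=0$ on $\partial B_R(y^*)$, $w>0$ in $A$, and $w(z)\geq c_1(R-|z-y^*|)$ on $\overline A$ with $c_1>0$ uniform. Every point of $\partial B_{R/2}(y^*)$ has $\delta\geq R/2\geq a$, hence $u\geq m$ there, so choosing $\e_0=m/\max_A w$ gives $\e_0 w\leq u$ on both components of $\partial A$, while $-\De(u-\e_0 w)+k(u-\e_0 w)\geq0$ in $A$ in the distributional sense.

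\noindent Because $k\geq0$, the operator $-\De+k$ satisfies the weak maximum principle, so $u\geq\e_0 w$ throughout $A$; as in the first step, the low-regularity version of this comparison is obtained by mollifying $u$ on a slightly smaller annulus and passing to the limit. Evaluating at the chosen point yields $u(x)\geq\e_0 w(x)\geq\e_0 c_1(R-|x-y^*|)=\e_0 c_1\,\delta(x)$. Since $R,\al,c_1,m$, and hence $\e_0$, are uniform over $\partial\Om$ by compactness, this gives $u\geq\e_1\delta$ on $\Om_a$ for a fixed $\e_1>0$. On $\{\delta\geq a\}$ one has $u\geq m\geq(m/\operatorname{diam}\Om)\,\delta$, so with $\e=\min\{\e_1,\,m/\operatorname{diam}\Om\}$ one concludes $u\geq\e\delta$ on all of $\Om$.

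\noindent I expect the main obstacle to be the low regularity: $u\in L^1_{\text{loc}}$ with only distributional inequalities, so neither the weak Harnack inequality nor the weak maximum principle can be invoked for $u$ itself. The remedy is the systematic mollification on interior subdomains, where $u_\eta$ is a genuine smooth supersolution, followed by a careful limit; in particular one must verify enough local integrability that the truncation $(u-\e_0 w)^-$ is an admissible test function in the comparison. By contrast, the explicit barrier computation and the interior-ball bookkeeping are routine once the uniform radius $R$ coming from the smoothness of $\partial\Om$ is available.
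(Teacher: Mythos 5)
The paper gives no proof of this lemma at all: it is quoted verbatim as Theorem 3 of Brezis--Nirenberg \cite{brenir}, so there is no internal argument to compare yours against. Judged on its own, your strategy is the classical one by which results of this type are proved: mollification turns the distributional inequality into a genuine smooth supersolution on subsets compactly contained in $\Om$; the weak Harnack inequality for $-\De+k$ (valid since $k\ge 0$) plus chaining along the connected domain gives $\operatorname{ess\,inf}_K u>0$ on every compact $K\subset\Om$ whenever $u\not\equiv 0$; and the uniform interior ball condition together with the exponential Hopf barrier converts interior positivity into the linear lower bound near $\partial\Om$. Your barrier computation, the geometric facts ($\delta(z)\ge R-|z-y^*|$ inside the touching ball, collinearity of $x$, $x^*$, $y^*$ for smooth boundary, $w\ge c_1(R-r)$), and the final gluing on $\{\delta\ge a\}$ are all correct.

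Two points, however, need repair before this is a complete proof; both are fixable by standard devices. First, your annulus $A=B_R(y^*)\setminus\overline{B_{R/2}(y^*)}$ touches $\partial\Om$ at $x^*$, so it is \emph{not} compactly contained in $\Om$ and $u*\rho_\eta$ is not defined on all of $A$. On the shrunken annulus $A_\sigma=B_{R-\sigma}(y^*)\setminus\overline{B_{R/2}(y^*)}$ (which is $\Subset\Om$) you only know $u_\eta\ge 0$ on the new outer sphere, where your $w$ is strictly positive; so you must also shrink the barrier, replacing $w$ by $w_\sigma(z)=e^{-\al|z-y^*|^2}-e^{-\al(R-\sigma)^2}$, run the classical comparison on $A_\sigma$, and then send $\eta\to 0$ followed by $\sigma\to 0$ (and take, say, $a=R/4$ so that mollified values on the inner sphere still dominate $m$). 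Second, since $u\in L^1_{\text{loc}}$, each comparison yields $u\ge\e_0 w_{x^*}$ only almost everywhere, with an exceptional null set depending on $x^*$; as $x^*$ ranges over the uncountable set $\partial\Om$, you cannot simply ``evaluate at the chosen point $x$.'' The fix is to carry out the comparison for a countable dense set of boundary points and use the continuity of $x^*\mapsto y^*(x^*)$ (smooth boundary) to reach all of $\Om_a$, concluding $u\ge\e\delta$ a.e.\ in $\Om$ --- which is the correct reading of the pointwise statement for an $L^1_{\text{loc}}$ function. Finally, your closing worry about admissibility of $(u-\e_0 w)^-$ as a test function is a red herring: $u$ need not belong to $H^1_{\text{loc}}$, so the $H^1$ weak maximum principle is unavailable anyway, and the mollification route you describe is the right (and sufficient) substitute.
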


\begin{Theorem}\label{bdabvdist}
Let $u$ be a positive weak solution of $(P_\la)$, then there exist $L>0$ such that $u \geq L\delta$ in $\Om$.
\end{Theorem}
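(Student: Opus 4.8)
The plan is to observe that a positive weak solution $u$ of $(P_\la)$ is \emph{superharmonic} in $\Om$, and then to apply the Hopf-type boundary estimate of Lemma \ref{Hops} directly. No construction of barriers or comparison functions is needed for the lower bound, in contrast to the upper bound of Theorem \ref{distrel}.

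First I would record the distributional differential inequality satisfied by $u$. Testing the weak formulation (equivalently, Lemma \ref{reg}) against an arbitrary $0 \leq \psi \in C_c^\infty(\Om)$ gives
\[
\int_\Om \nabla u \nabla \psi\,\mathrm{d}x = \la \int_\Om u^{-q}\psi\,\mathrm{d}x + \int_\Om\int_\Om \frac{|u(y)|^{2^*_\mu}|u(x)|^{2^*_\mu-2}u(x)\psi(x)}{|x-y|^\mu}\,\mathrm{d}y\,\mathrm{d}x.
\]
Since $u > 0$ in $\Om$ and $\psi \geq 0$, both terms on the right-hand side are nonnegative. Hence $\int_\Om \nabla u \nabla \psi\,\mathrm{d}x \geq 0$ for every such $\psi$, which is precisely the statement that $-\De u \geq 0$ in the sense of distributions on $\Om$.

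Next I would verify the hypotheses of Lemma \ref{Hops} with $k = 0$. We have $u \in H^1_0(\Om) \subset L^1_{\text{loc}}(\Om)$, $u \geq 0$ in $\Om$, and $-\De u + 0\cdot u = -\De u \geq 0$ in $\Om$ by the previous step. Because $u$ is a \emph{positive} weak solution, $u \not\equiv 0$. Lemma \ref{Hops} then yields some $\e > 0$ with $u(x) \geq \e\,\delta(x)$ for all $x \in \Om$; setting $L = \e$ completes the argument.

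The reasoning is essentially immediate once superharmonicity is noted, so there is no serious obstacle; the only point demanding care is confirming that both the singular term $\la u^{-q}$ and the nonlocal Choquard term are genuinely nonnegative as distributions, so that the sign of $-\De u$ is unambiguous. This is guaranteed by $u > 0$ together with $u \in L^\infty(\Om)$ (Lemma \ref{L-infty}), which makes the kernel $\int_\Om |u(y)|^{2^*_\mu}|x-y|^{-\mu}\,\mathrm{d}y$ finite almost everywhere and the coupled term well-defined and nonnegative. Combined with the upper estimate of Theorem \ref{distrel}, the bound $u \geq L\delta$ established here furnishes the two-sided comparison asserted in Theorem \ref{mainthrm3}.
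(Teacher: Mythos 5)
Your proposal is correct and follows essentially the same route as the paper: test the weak formulation against nonnegative $\psi \in C_c^\infty(\Om)$, use $u>0$ to conclude $-\De u \geq 0$ in the sense of distributions, and apply Lemma \ref{Hops} (with $k=0$) to obtain $u \geq L\delta$. Your additional care in verifying the hypotheses of Lemma \ref{Hops} (that $u \in L^1_{\mathrm{loc}}(\Om)$, $u \not\equiv 0$, and the well-definedness of the Choquard term) only makes explicit what the paper leaves implicit.
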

\begin{proof}
Let $0\leq \psi \in C^\infty_c(\Om)$. Since $u$ is positive weak solution of $(P_\la)$, $u >0$ in $\Om$ and
\begin{align*}
\int_\Om \nabla u \nabla \psi~\mathrm{d}x = \la \int_\Om u^{-q}\psi~\mathrm{d}x+ \int_{\Om}\int_{\Om}\frac{|u(y)|^{2^*_{\mu}}|u(x)|^{2^*_{\mu}-2}u(x)\psi(x)}{|x-y|^{\mu}}~\mathrm{d}x\mathrm{d}y \geq 0.
\end{align*}
Therefore using Lemma \ref{Hops}, we conclude that there must exist a constant $L>0$  such that $u \geq L\delta$ in $\Om$.\QED
\end{proof}
\noi {\bf Proof of Theorem \ref{mainthrm3}:} The proof of Theorem \ref{mainthrm2} follows from Theorem \ref{distrel} and Theorem \ref{bdabvdist}.\\
Using these results, we can say that each positive weak solution of $(P_\la)$ is a classical solution that is $u \in C^\infty(\Om)\cap  C(\bar \Om)$. But actually we can show a little more, see next result.

\begin{Lemma}
Let $q\in (0,\frac{1}{n})$ and  let $u \in H^1_0(\Om)$ be a positive weak solution of $(P_\la)$, then $u \in C^{1+\alpha}(\bar \Om)$ for some $0<\alpha<1$.
\end{Lemma}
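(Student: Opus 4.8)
The plan is to identify the right-hand side of $(P_\la)$ as an $L^p$ function for some $p>n$, and then bootstrap once via global Calder\'on--Zygmund regularity followed by Morrey's embedding. More precisely, writing the equation as $-\De u = f$ with
\[
f(x) = \la u^{-q}(x) + \left( \int_{\Om}\frac{|u(y)|^{2^*_{\mu}}}{|x-y|^{\mu}}\,\mathrm{d}y \right)|u(x)|^{2^*_{\mu}-2}u(x),
\]
I would first establish $f \in L^p(\Om)$ for some $p>n$, upgrade $u$ from $H^1_0(\Om)$ to $W^{2,p}(\Om)$ by standard $L^p$-elliptic estimates on the smooth domain $\Om$, and then use $W^{2,p}(\Om)\hookrightarrow C^{1,\al}(\bar\Om)$ with $\al = 1-n/p$.

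The critical (Choquard) part of $f$ is harmless: by Lemma \ref{L-infty} we have $u\in L^\infty(\Om)$, and by \eqref{chobdd} the potential $\int_{\Om}|u(y)|^{2^*_{\mu}}/|x-y|^{\mu}\,\mathrm{d}y$ lies in $L^\infty(\Om)$, so the whole term $(\int_{\Om}|u|^{2^*_{\mu}}/|x-y|^{\mu}\,\mathrm{d}y)|u|^{2^*_{\mu}-2}u$ belongs to $L^\infty(\Om)\subset L^p(\Om)$. The delicate part is the singular term $u^{-q}$ near $\pa\Om$, and here I would invoke Theorem \ref{mainthrm3}, which furnishes $L,K>0$ with $L\delta \leq u \leq K\delta$ in $\Om$. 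In particular $u\geq L\delta$ gives the pointwise bound $u^{-q}\leq L^{-q}\delta^{-q}$. Since $\Om$ is bounded with smooth boundary, $\int_{\Om}\delta^{-s}\,\mathrm{d}x<\infty$ for every $s<1$; hence $\delta^{-q}\in L^p(\Om)$ precisely when $qp<1$. The hypothesis $q<1/n$ means $1/q>n$, so I can fix $p$ with $n<p<1/q$, for which $qp<1$ holds. Thus $u^{-q}\in L^p(\Om)$ with $p>n$, and combining with the bounded critical term yields $f\in L^p(\Om)$.

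With $f\in L^p(\Om)$, $p>n$, and $u\in H^1_0(\Om)$ a weak solution of $-\De u=f$ (in the sense made precise by Lemma \ref{reg}, valid against all $H^1_0(\Om)$ test functions), global $W^{2,p}$-regularity for the Dirichlet Laplacian on a smooth domain gives $u\in W^{2,p}(\Om)$ after identifying $u$ with the unique weak solution carrying zero boundary data. Morrey's inequality then yields $u\in C^{1+\al}(\bar\Om)$ with $\al = 1-n/p\in(0,1)$, completing the argument. The main obstacle is controlling the singular nonlinearity up to the boundary: it is exactly the sharp two-sided estimate $L\delta\le u\le K\delta$ together with the sharp integrability threshold for $\delta^{-q}$ that makes this work, and the restriction $q<1/n$ is what guarantees the resulting exponent $p$ can be taken strictly larger than $n$, so that the $C^{1,\al}$ embedding applies.
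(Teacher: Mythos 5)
Your proposal is correct, and it is a genuine streamlining of the paper's argument rather than a reproduction of it. The treatment of the singular term is identical in both: the lower bound $u \geq L\delta$ (Theorem \ref{bdabvdist}), the integrability $\int_\Om \delta^{-qp}\,\mathrm{d}x < \infty$ for $qp<1$, and the hypothesis $q<1/n$ used to select $n<p<1/q$, so that $u^{-q}\in L^p(\Om)$ with $p>n$. Where you diverge is the critical Choquard term: you place it directly in $L^\infty(\Om)$ by combining Lemma \ref{L-infty} with \eqref{chobdd}, which lets you apply the Calder\'on--Zygmund estimate exactly once at the exponent $p>n$ and conclude via Morrey's embedding $W^{2,p}(\Om)\hookrightarrow C^{1,1-n/p}(\bar\Om)$. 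The paper instead estimates the nonlinear factor only through the Sobolev embedding, $u^{2^*_\mu-1}\in L^t(\Om)$ with $t=2^*/(2^*_\mu-1)<n$, and is therefore forced into a bootstrap: $W^{2,t}\hookrightarrow L^{m_1}$ with $m_1=2^*\rho_0$, $\rho_0>1$, iterated until $2^*\rho_0^m/(2^*_\mu-1)>n$, after which a final Calder\'on--Zygmund step at an exponent exceeding $n$ and the same H\"older embedding finish the proof. Since the paper has already established Lemma \ref{L-infty} (and indeed $u\le K\delta$ in Theorem \ref{mainthrm3}), your shortcut is fully justified and renders the iteration unnecessary; the only thing the longer route buys is that it never uses the uniform bound on $u$ itself, only boundedness of the Riesz potential \eqref{chobdd}, at the cost of the inductive machinery. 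You were also right to flag the one technical point both arguments need: the weak formulation must be upgraded from $C_c^\infty(\Om)$ test functions to all of $H^1_0(\Om)$ (Lemma \ref{reg}) so that $u$ can be identified with the unique $W^{2,p}\cap W^{1,p}_0$ solution of the linear Dirichlet problem.
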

\begin{proof}
By previous lemma, we know there exist  a constant $L>0$  such that $u \geq L\delta$ in $\Om$. Since $\frac{1}{\delta}\in L^1(\Om),$ we can find $p>n$ such that $ u^{-q } \in L^{p}(\Om).$
 Also, by Sobolev embedding theorem, we know $u^{2^*_\mu-1} \in L^{t}(\Om)$ where $t=\frac{2^*}{2^*_\mu-1} < n$. Using the Cald$\acute{e}$ron–-Zygmund inequality (refer Theorem B.2 of \cite{struwe}) and since \eqref{chobdd} holds, there exists $M>0$ such that
\begin{equation}\label{C1alpha1}
\begin{split}
|u|_{W^{2,t}(\Om)} &\leq D \left( |u|_t+
\la |u^{-q}|_{t}+ M|u^{2^*_\mu-1}|_{t}\right) \leq D \left( 1+ |\delta^{-q}|_t+ |u^{2^*_\mu-1}|_{t} \right)\\
& \leq D \left( 1+ |u|^{2^*_\mu-1}_{2^*}\right),
\end{split}
\end{equation}
where $D$ is a positive constant which changes at each step. Thus by Sobolev inequality , we have
\begin{equation}\label{C1alpha2}
|u|_{m_1} \leq \left( 1+ |u|^{2^*_\mu-1}_{2^*}\right),
\end{equation}
where $m_1 = \frac{nt}{n-2t}= 2^*\rho_0 $ and $\rho_0 = \frac{n}{(2^*_\mu-1)(n-2t)}>1 $. Thus again using \eqref{C1alpha1} and \eqref{C1alpha2} with replacing $t$ by $m_1$,  we have
\[|u|_{m_2} \leq \left( 1+ |u|^{2^*_\mu-1}_{m_1}\right) \leq \left( 1+ |u|^{(2^*_\mu-1)^2}_{2^*}\right) \]
where $m_2 = \frac{nm_1}{(2^*_\mu-1)(n-2m_1)}= 2^* (\rho_0)^2 $. Repeating the same process and replacing $m_2$ by $m_1$, we can get a positive integer $m$ such that $\frac{2^*\rho_0^m}{2^*_\mu-1}>n $ and therefore using Sobolev embedding we get
\begin{align*}
|u|_{C^{1+\alpha}(\bar \Om)} & D\leq |u|_{W^{2, \min\{\rho,\frac{2^*\rho_0^m}{2^*_\mu-1}\}}} \leq D \left(|u^{-q}|_{\rho}+ |u|_{\frac{2^*\rho_0^m}{2^*_\mu-1}} \right)\\
& \leq D \left(1+ |u|^{(2^*_\mu-1)^m}_{2^*} \right) \leq D \left(1+ \|u\|^{(2^*_\mu-1)^m} \right),
\end{align*}
for $\alpha \in (0,1)$ and $D$ being a positive constant which changes at each step. This completes the proof. \QED
\end{proof}

\end{document}